\renewcommand{\Re}{\operatorname{Re}}
\newcommand{\eq}{:=}
\newcommand{\grad}{\boldsymbol \nabla}
\renewcommand{\div}{\grad \cdot}
\newcommand{\ddiv}{\operatorname{div}}
\newcommand{\BA}{\boldsymbol A}
\newcommand{\BH}{\boldsymbol H}
\newcommand{\BI}{\boldsymbol I}
\newcommand{\BL}{\boldsymbol L}
\newcommand{\ba}{\boldsymbol a}
\newcommand{\bb}{\boldsymbol b}
\newcommand{\bd}{\boldsymbol d}
\newcommand{\bn}{\boldsymbol n}
\newcommand{\bv}{\boldsymbol v}
\newcommand{\bx}{\boldsymbol x}
\newcommand{\CC}{\mathcal C}
\newcommand{\CE}{\mathcal E}
\newcommand{\CF}{\mathcal F}
\newcommand{\CI}{\mathcal I}
\newcommand{\CP}{\mathcal P}
\newcommand{\CT}{\mathcal T}
\newcommand{\CV}{\mathcal V}
\newcommand{\LH}{\mathscr H}
\newcommand{\LL}{\mathscr L}
\newcommand{\LP}{\mathscr P}
\newcommand{\LR}{\mathscr R}
\newcommand{\LS}{\mathscr S}
\newcommand{\LV}{\mathscr V}
\newcommand{\BCP}{\boldsymbol{\CP}}
\newcommand{\pa}{\psi_{\ba}}
\newcommand{\oa}{\omega_{\ba}}
\newcommand{\ha}{h_{\ba}}
\newcommand{\ca}{\vartheta_{\ba}}
\newcommand{\Ga}{{\Gamma_{\ba}}}
\newcommand{\cK}{\vartheta_K}
\newcommand{\diva}{d^{\ba}}
\newcommand{\bnda}{b^{\ba}}
\newcommand{\Ctra}{C_{{\rm tr},\ba}}
\newcommand{\CPa}{C_{{\rm P},\ba}}
\newcommand{\Cconta}{C_{{\rm cont},\ba}}
\newcommand{\Csta}{C_{{\rm st},\ba}}
\newcommand{\Cst}{C_{\rm st}}
\newcommand{\Cgeo}{C_{\rm geo}}
\newcommand{\Cgeoa}{C_{{\rm geo},\ba}}
\newcommand{\Clb}{C_{\rm lb}}
\newcommand{\hs}{h_\star}
\newcommand{\cs}{\vartheta_\star}
\newcommand{\hmax}{h_{\max}}
\newcommand{\cmin}{\vartheta_{\min}}
\newcommand{\osc}{\operatorname{osc}}
\newcommand{\Ci}{C_{\rm i}}
\newcommand{\bchi}{\boldsymbol \chi}
\newcommand{\dt}{\delta t}
\newcommand{\sr}{\beta}
\newcommand{\supp}{\operatorname{supp}}
\newcommand{\SlopeTriangle}[6]
{

    \pgfplotsextra
    {
        \pgfkeysgetvalue{/pgfplots/xmin}{\xmin}
        \pgfkeysgetvalue{/pgfplots/xmax}{\xmax}
        \pgfkeysgetvalue{/pgfplots/ymin}{\ymin}
        \pgfkeysgetvalue{/pgfplots/ymax}{\ymax}

        \pgfmathsetmacro{\xArel}{#1}
        \pgfmathsetmacro{\yArel}{#3}
        \pgfmathsetmacro{\xBrel}{#1-#2}
        \pgfmathsetmacro{\yBrel}{\yArel}
        \pgfmathsetmacro{\xCrel}{\xArel}

        \pgfmathsetmacro{\lnxB}{\xmin*(1-(#1-#2))+\xmax*(#1-#2)} 
        \pgfmathsetmacro{\lnxA}{\xmin*(1-#1)+\xmax*#1} 
        \pgfmathsetmacro{\lnyA}{\ymin*(1-#3)+\ymax*#3} 
        \pgfmathsetmacro{\lnyC}{\lnyA+#4*(\lnxA-\lnxB)}
        \pgfmathsetmacro{\yCrel}{\lnyC-\ymin)/(\ymax-\ymin)} 

        \coordinate (A) at (rel axis cs:\xArel,\yArel);
        \coordinate (B) at (rel axis cs:\xBrel,\yBrel);
        \coordinate (C) at (rel axis cs:\xCrel,\yCrel);

        \draw[#6]   (A)-- node[anchor=north] {#5}
                    (B)--
                    (C)--
                    cycle;
    }
}
\newcommand{\hu}{\widehat u}
\newcommand{\hf}{\widehat f}
\newcommand{\hg}{\widehat g}
\newcommand{\hxi}{\widehat \xi}
\newcommand{\bsig}{\boldsymbol \sigma}
\newcommand{\btau}{\boldsymbol \tau}
\newcommand{\hgamma}{\widehat \gamma}
\newcommand{\RT}{\boldsymbol{RT}}
\newcommand{\CTa}{\CT_{\ba}}
\newcommand{\CFa}{\CF_{\ba}}
\newtheorem{theorem}{Theorem}
\newtheorem{lemma}[theorem]{Lemma}
\newtheorem{corollary}[theorem]{Corollary}
\newtheorem{proposition}[theorem]{Proposition}
\newtheorem{definition}[theorem]{Definition}
\newtheorem{remark}[theorem]{Remark}
\newcommand{\enorm}[1]{|\!|\!|#1|\!|\!|}
\newcommand{\GD}{{\Gamma_{\rm D}}}
\newcommand{\GA}{{\Gamma_{\rm A}}}
\numberwithin{equation}{section}
\numberwithin{figure}{section}
\numberwithin{theorem}{section}
\title[Constant-free and p-robust estimates for the wave equation]%
{Asymptotically constant-free and polynomial-degree-robust a posteriori
estimates for space discretizations of the wave equation}
\address{\vspace{-.5cm}}
\address{\noindent \tiny \textup{$^\dagger$Inria Universit\'e C\^ote d'Azur, LJAD, CNRS}}
\author{T. Chaumont-Frelet$^\dagger$}
\begin{document}

\maketitle

\begin{abstract}
We derive an equilibrated a posteriori error estimator for the space (semi) discretization
of the scalar wave equation by finite elements. In the idealized setting where time discretization
is ignored and the simulation time is large, we provide fully-guaranteed upper bounds
that are asymptotically constant-free and show that the proposed estimator is
efficient and polynomial-degree-robust, meaning that the efficiency constant does
not deteriorate as the approximation order is increased. To the best of our knowledge,
this work is the first to derive provably efficient error estimates for the wave equation.
We also explain, without analysis, how the estimator is adapted to cover time discretization
by an explicit time integration scheme. Numerical examples illustrate the theory and suggest
that it is sharp.

\vspace{.5cm}
\noindent
{\sc Key words.}
a posteriori error estimate;
equilibrated flux;
finite element method;
wave equation
\end{abstract}


\section{Introduction}

Given a domain $\Omega$, a partition of its boundary into disjoint sets $\GD$ and $\GA$
and space-variable coefficients $\mu,\BA$ and $\gamma$, the scalar wave equation consists
in finding $u$ such that
\begin{equation}
\label{eq_wave_equation}
\left \{
\begin{array}{rcll}
\mu \ddot u - \div \left (\BA\grad u\right ) &=& \mu f & \text{ in } \Omega,
\\
\gamma \dot u + \BA \grad u \cdot \bn &=& \gamma g & \text{ on } \GA,
\\
u &=& 0 & \text{ on } \GD,
\end{array}
\right .
\end{equation}
$u|_{t=0} = u_0$ and $\dot u|_{t=0} = u_1$, where $f,g,u_0$ and $u_1$ are given
right-hand sides and initial conditions. This problem models the propagation of
(small) acoustics waves \cite{jacobsen_juhl_2013a}, and is a good mathematical simplification
to later address more complicated physical waves arising for instance in elastodynamics and
electromagnetism. As a result, the analysis of \eqref{eq_wave_equation} and its
numerical discretization has attracted much attention over the past decades
\cite{%
baldassari_barucq_calandra_denel_diaz_2012a,%
bernardi_suli_2005a,%
burman_duran_ern_steins_2021a,%
cohen_joly_roberts_tordjman_2001a,%
fezoui_lanteri_lohrengel_piperno_2005a,%
georgoulis_lakkis_makridakis_2013a,%
gorynina_lozinski_picasso_2019a,%
griesmaier_monk_2014a,%
grote_schneebeli_schotzau_2006a,%
hesthaven_warburton_2002a,%
picasso_2010a}.

Here, we are interested in finite element discretizations of
\eqref{eq_wave_equation}, which have the advantage to easily handle complex geometries.
For the sake of simplicity we focus on conforming Lagrange elements
\cite{boffi_brezzi_fortin_2013a,cohen_joly_roberts_tordjman_2001a},
but discontinuous Galerkin
\cite{%
fezoui_lanteri_lohrengel_piperno_2005a,%
grote_schneebeli_schotzau_2006a,%
hesthaven_warburton_2002a}
or hybridized
\cite{burman_duran_ern_steins_2021a,griesmaier_monk_2014a}
methods could be considered as well. As compared to elliptic and parabolic problems (see, e.g,
\cite{ainsworth_oden_2000a,ern_vohralik_2015a} and
\cite{ern_vohralik_2010a,makridakis_nochetto_2003a,verfurth_2013a}, and the references therein),
the a posteriori error analysis of \eqref{eq_wave_equation} has not received much attention
and it is the focus of the present work.

To the best of the author's knowledge, the first references considering a posteriori
error estimation for the wave equation are \cite{adjerid_2002a,adjerid_2006a}.
There, an asymptotically exact error estimator is proposed for the space semi discretization.
The analysis hinges on hierarchical polynomial basis, and strongly relies on tensor-product
meshes and the smoothness of the solution. Hence, this methodology is not suited for
general geometries which requires simplicial (or general hexahedral) meshes and where the
solution may exhibit singularities.

In \cite{bernardi_suli_2005a}, an estimator is proposed for the
full discretization of \eqref{eq_wave_equation}, where an implicit
Euler scheme is used for time integration. Lower and upper bounds for
the error are derived, but these bounds are not entirely satisfactory.
Indeed, the norms employed to measure the error in the upper and lower
bounds are different (compare \cite[Corollary 5.3]{bernardi_suli_2005a} and
\cite[Theorem 5.1]{bernardi_suli_2005a}), indicating that the estimator may
not be efficient. Besides, since this approach explicitly relies on the implicit
Euler scheme, it is unclear whether it can be extended to space  semi-discretzation
or to explicit time stepping. Finally, the behaviour of the constants in the estimates
with respect to the problem parameters is unclear.

Another family of estimators, relying on the an elliptic reconstriction
presented and analyzed in
\cite{georgoulis_lakkis_makridakis_2013a,gorynina_lozinski_picasso_2019a,picasso_2010a}.
Upper bounds for the error measured in various norms,
including the ``$L^\infty(L^2)$'' and ``$L^2(H^1)$'' norms, have been derived,
but lower bounds are not available so far.

Goal-oriented adaptive algorithms are also derived in
\cite{bangerth_geiger_rannacher_2010a,bangerth_rannacher_2001a},
where the focus is more on the algorithmic design and numerical tests
than on theoretical analysis.

In this work, we discretize \eqref{eq_wave_equation} with conforming Lagrange finite elements
in space. We neglect the time-discretization, and study the resulting semi-discrete problem.
In spirit, the design of our estimator is similar to the one in \cite{bernardi_suli_2005a},
as it is directly constructed using the ``instantaneous residual'' of the PDE
doe not rely on an elliptic reconstruction. However, in contrast with \cite{bernardi_suli_2005a},
our analysis does not rely on the use of an implicit scheme and, although we focus on the
semi-discrete case here, it could be extended to cover an explicit time scheme. Another key
feature of our analysis is that we obtain constants that are much more explicit, in the upper
bound and in the lower bound, than in \cite{bernardi_suli_2005a}. Actually, although our
methodology is general, we focus on equilibrated estimators so that the constant in our upper
bound is fully computable and asymptotically equal to $1$, and the constant in the lower bound
is polynomial-degree-robust. Notice that similar results could be achieved with, e.g.,
a residual based estimator, at the price of introducing a (in general unknown) quasi-interpolation
constant in the upper bound, and a $p$-dependent constant in the lower bound which is due
to the use of ``bubble functions'' in the efficiency analysis.

Our setting is able to handle absorbing boundary conditions,
which appears to be new in the context of a posteriori error estimation.
We also allow for discontinuous material coefficients and (possibly) non-smooth
solutions in space. However, we require that the right-hand sides $f$ and $g$
are smooth in time and for the sake of simplicity, we neglect the approximation
of initial conditions (i.e. we assume that $u_0$ and $u_1$ are discrete functions).
We also assume for shortness that $f$ and $g$ are piecewise polynomial in space,
but we could classically alleviate this limitation by introducing the usual data
oscillation terms. These assumptions are rigorously stated in Section
\ref{section_settings}.

The key idea of this work is to measure the error in a ``damped energy norm''
\begin{equation*}
\CE_\rho^2 \eq
\int_0^{+\infty}
\left (
\|\dot u-\dot u_h\|_{\mu,\Omega}^2
+
\frac{1}{\rho} \|\dot u - \dot u_h\|_{\gamma,\GA}^2
+
\|\grad(u-u_h)\|_{\BA,\Omega}^2
\right )
e^{-2\rho t} dt,
\end{equation*}
where $\rho > 0$ is a user-defined damping parameter that can
be selected as small as desired, and $u_h$ is the (semi-discrete)
finite element approximation (notice that this norm is stronger than
the $L^\infty(L^2)$ and $L^2(H^1)$ norms of \cite{georgoulis_lakkis_makridakis_2013a}
and \cite{picasso_2010a}). We then propose an estimator $\eta(t)$
computed at each time step from the ``instantaneous'' residual
\begin{equation*}
v
\to
(\mu f(t),v)_\Omega+(\gamma g(t),v)_\GA
-
(\mu \ddot u_h(t),v)
-
(\gamma \dot u_h(t),v)_\GA
-
(\BA\grad u_h(t),\grad v)_\Omega.
\end{equation*}
The computation of $\eta(t)$ is local in both time and space, and follows the
equilibrated flux construction designed in \cite{%
braess_pillwein_schoberl_2009a,
chaumontfrelet_ern_vohralik_2021a,
destuynder_metivet_1999a,
ern_vohralik_2015a}.
We emphasize that other standard techniques, such as residual-based constructions, can
be employed to define $\eta(t)$, although the final constants in the estimates are less
explicit in this case. Then, letting
\begin{equation}
\label{eq_definition_LBA_rho}
\Lambda_\rho^2 \eq \int_0^{+\infty} \eta^2(t) e^{-2\rho t} dt,
\end{equation}
our key results read as follows. We have the upper bound
\begin{equation}
\label{eq_intro_reliability}
\CE_\rho^2 \leq
(1+4\gamma_{\rho,\omega,h}^2) \Lambda_\rho^2 + \left (\frac{\rho}{\omega}\right )^{2r} \osc_{\rho,r}^2,
\end{equation}
where $\omega > 0$ and $r \in \mathbb N$ are arbitrary, $\osc_{\rho,r}$ is a fully computable
``time-domain'' data oscillation term that involves $\rho$ weighted norms of $f^{(r)}$ and
$g^{(r)}$, and $\gamma_{\rho,\omega,h}$ is a constant such that
\begin{equation}
\label{eq_intro_gamma}
\gamma_{\rho,\omega,h} \leq \sqrt{1 + \frac{\omega}{\rho}},
\qquad
\lim_{(h/p) \to 0} \gamma_{\rho,\omega,h} = 0.
\end{equation}
The ``cut-off frequency'' $\omega$ can be freely selected, and in particular,
we can choose it large enough so that the second term in the right-hand side
of \eqref{eq_intro_reliability} is of ``higher-order'', meaning that it converges
to zero faster than the error (see Corollary \ref{corollary_reliability} below).
The estimate in \eqref{eq_intro_gamma} shows that all the constants in our upper bound
are indeed explicitly controlled, while the limit justifies that our reliability estimate is
asymptotically constant-free. On the other hand, we also show that
\begin{equation}
\label{eq_intro_efficiency}
\Lambda_\rho^2 \leq \Clb^2
\kappa_{\BA}
\left \{
\left (
\kappa_{\BA} + \frac{\rho \hs}{\cs} + \left (\frac{\omega \hs}{\cs}\right )^2
\right )
\CE_\rho^2
+
\left (\frac{\rho \hs}{\cs}\right )^2 \left (\frac{\rho}{\omega}\right )^{2r}
\osc_{\rho,r+1}^2
\right \},
\end{equation}
where $\hs/\cs \simeq \max_K h_K/\cK$ with $\cK$ the wave speed in the element $K$,
$\kappa_{\BA}$ is the ``contrast'' in the coefficient $\BA$, and $\Clb$ is a generic
constant solely depending on the shape-regularity of the mesh. All the constants appearing
in \eqref{eq_intro_efficiency} are independent of the polynomial degree $p$ of the finite
element approximation, so that our estimator is efficient and polynomial-degree-robust.
We refer the reader to Section \ref{section_main_results} for more details.

In practice, one is often interested in understanding the behaviour of the solution $u$
of \eqref{eq_wave_equation} on the time interval $(0,T)$ for some known $T > 0$. This can be
accommodated in our setting by selecting $\rho \sim 1/T$. Another important comment is that
in addition to ignoring time discretization, our setting also assumes that the simulation time
is infinite, since the definition of the estimator $\Lambda_\rho$ in \eqref{eq_definition_LBA_rho}
includes an integral over $(0,+\infty)$. This is not an important limitation in practice as long
as the simulation time is large enough since the integrand in \eqref{eq_definition_LBA_rho}
decays exponentially. In practical computations, the integral in \eqref{eq_definition_LBA_rho}
is computed by accumulating the integrand over time steps, and the simulation can be stopped when
stagnation is numerically observed. This is explained in detail in Section
\ref{section_time_discretization}.

As far as the analysis is concerned, we take an approach that is fairly different from
the aforementioned works on this topic. Indeed, our key idea is to perform a
Laplace transform and to work in the frequency domain. We emphasize that this Laplace
transform is only used as a theoretical device: it is never computed numerically. We then
rely on recent results for the Helmholtz equation to control the low-frequency content of
the error \cite{chaumontfrelet_ern_vohralik_2021a,dorfler_sauter_2013a} and introduce new
arguments to treat the high-frequency content. We refer the reader to Section
\ref{section_frequency} for an in-depth discussion.

We provide several numerical examples where linear and quadratic
Lagrange finite elements are coupled with an explicit leap-frog scheme
\cite{georgoulis_lakkis_makridakis_virtanen_2016a}.
These examples are in full agreement with our theoretical findings and suggest
that they are sharp. Besides, they seem to indicate that time-discretization
can be entirely neglected for linear elements (most likely, because they are coupled
with a second-order integration scheme here), and only moderately affects quadratic
elements when choosing a time-step close to the CFL stability limit.

The remaining of this work is organized as follows.
Section \ref{section_settings} collects all the necessary notation to properly
introduce our main results, which we state in Section \ref{section_main_results}.
In Section \ref{section_frequency}, we present the strategy we employ in our analysis
as well as some preliminary results in the frequency domain. Sections \ref{section_reliability}
and \ref{section_efficiency} respectively contain the reliability and efficiency proofs.
We present a variety of numerical examples in Section \ref{section_numeric} before giving
some concluding remarks.

\section{Settings}
\label{section_settings}

This section describes notation and recalls preliminary results.

\subsection{Domain and coefficients}

$\Omega \subset \mathbb R^d$, $d=2$ or $3$, is a polytopal Lischiptz domain.
The boundary $\partial \Omega$ is split into two relatively open, polytopal and disjoint
subsets $\GD$ and $\GA$ with Lipschitz boundaries such that
$\partial \Omega = \overline{\GD} \cup \overline{\GA}$.
The situations where $\GD = \emptyset$ or $\GA = \emptyset$ are allowed.

Our model wave equation features three coefficients, namely
$\mu: \Omega \to \mathbb R$, $\BA: \Omega \to \mathbb S^d$
and $\gamma: \GA \to \mathbb R$. For the sake of simplicity, we
assume that there exists a partition $\LP$ of $\Omega$ into non-overlaping
Lipschitz polytopal subdomains such that for all $P \in \CP$, $\mu|_P$ and $\BA|_P$ are constant
and, if $B \eq \partial P \cap \GA$ has positive surface measure, that $\gamma|_B$ is constant.

We assume that $\mu > 0$ in $\Omega$ and, for the sake of simplicity, that $\gamma > 0$
on $\GA$. We also introduce, for a.e. $\bx \in \Omega$
\begin{equation*}
a_\star(\bx) \eq \min_{\substack{\bd \in \mathbb R^d \\ |\bd| = 1}}
\BA(\bx) \bd \cdot \bd,
\qquad
a^\star(\bx) \eq \max_{\substack{\bd \in \mathbb R^d \\ |\bd| = 1}}
\BA(\bx) \bd \cdot \bd,
\end{equation*}
and require that $a_\star > 0$ in $\Omega$.

\subsection{Mesh, hat functions and vertex patches}

The domain $\Omega$ is partitioned into a computational mesh $\CT_h$ of (closed)
simplicial elements $K$. We assume that the mesh is conforming, meaning that
the intersection $K_+ \cap K_-$ of two distinct elements $K_\pm \in \CT_h$ is either a single
vertex, a full edge, or a full face of both elements. This requirement is entirely
standard and does not prevent strong mesh grading (see, e.g., \cite{boffi_brezzi_fortin_2013a}).

For an element $K \in \CT_h$, $h_K$ and $\rho_K$ denote the diameters of the smallest ball
containing $K$ and of the largest ball contained in $K$. Then, $\sr_K \eq h_K/\rho_K$
is the shape regularity parameter of $K$. If $\CT \subset \CT_h$ is a collection of
elements $\sr_\CT \eq \max_{K \in \CT} \sr_K$ is the shape-regularity parameter
of $\CT$. The notations $h_{\max} \eq \max_{K \in \CT_h} h_K$ and
$\rho_{\min} \eq \min_{K \in \CT_h}$ will also be useful.

$\CF_h$ is the set of faces of $\CT_h$. We assume that $\CT_h$ conforms with the partition
boundary, meaning that for each face $F \in \CF_h$ such that $F \subset \partial \Omega$,
either $F \subset \overline{\GA}$ or $F \subset \overline{\GD}$. The set $\CF_h^{\rm A}$
collects those faces contained in $\GA$.

We will also assume that the mesh fits the physical partition, meaning that
for all $K \in \CT_h$, there exists a $P \in \LP$ such that $K \subset \overline{P}$.
Equivalently, it means that $\mu|_K$ and $\BA|_K$ are constant for $K \in \CT_h$, and
that $\gamma|_F$ is constant for all $F \in \CF_h^{\rm A}$.

$\CV_h$ is the set of vertices of $\CT_h$. For $\ba \in \CV_h$, $\pa$ denotes the
associated hat function, that is, the only piecewise affine function on $\CT_h$ such
that $\pa(\bb) = \delta_{\ba,\bb}$ for all $\bb \in \CV_h$. We set $\oa \eq \supp \pa$.
Then, the vertex patch $\CTa \subset \CT_h$ collecting the elements $K \in \CT_h$ sharing
the vertex $\ba$ covers $\oa$.

\subsection{Key functional spaces}
\label{section_functional_spaces}

If $U \subset \Omega$ is an open set, $L^2(U)$, $H^1(U)$
and $H^2(U)$ are the usual (real-valued) Lebesgue and Soboelv spaces \cite{adams_fournier_2003a}.
$\LH^1_\GD(\Omega)$ and $\LH^2(\Omega)$ for the complex-valued counterparts
of $H^1_\GD(\Omega)$ and $H^2(\Omega)$. $L^2_0(U)$ is the subset of $L^2(U)$ of functions
with vanishing mean value, and $\BL^2(U) \eq [L^2(U)]^d$. The natural inner-products and norm
of $L^2(U)$ and $\BL^2(U)$ are denoted by $(\cdot,\cdot)_U$ and $\|{\cdot}\|_U$. We also employ the
equivalent norms $\|{\cdot}\|_{\mu,U}^2 \eq (\mu\cdot,\cdot)_U$ and
$\|{\cdot}\|_{\BA,U}^2 \eq (\BA\cdot,\cdot)_U$ on $L^2(U)$ and $\BL^2(U)$. For a relatively open
$V \subset \GA$, $L^2(V)$ is the Lebesgue space on $V$ equiped with the surface measure.
$(\cdot,\cdot)_V$ and $\|{\cdot}\|_V$ are the norm and inner-product of $L^2(V)$,
and we set $\|{\cdot}\|_{\gamma,V}^2 \eq (\gamma\cdot,\cdot)_V$.
$\BH(\ddiv,U)$ is the subspace of $\BL^2(U)$ consisting of functions with weak
$L^2(U)$ divergence \cite{girault_raviart_1986a}. If $w \subset \partial U$ is
a relatively open subset, then $H^1_w(U)$ is the subset of $H^1(U)$ of functions
with vanishing traces on $w$. Similarly, $\BH_w(\ddiv,U)$ is the subset $\BH(\ddiv,U)$
of functions with vanishing normal traces on $w$ (we refer the reader to
\cite{fernandes_gilardi_1997a} for a rigorous definition of normal traces on parts
of the boundary).

When analyzing the flux construction, we will also need local spaces associated
to vertex patches. If $\ba \in \partial \Omega$, we set $L^2_\star(\oa) \eq L^2(\oa)$,
$H^1_\star(\oa) \eq H^1_{\partial \Omega}(\oa)$ and
$\BH_0(\ddiv,\oa) \eq \BH_{\partial \oa \setminus \partial \Omega}(\ddiv,\oa)$.
On the other hand, when $\ba \in \Omega$, we let $L^2_\star(\oa) \eq L^2_0(\oa)$,
$H^1_\star(\oa) \eq H^1(\oa) \cap L^2_\star(\oa)$ and
$\BH_0(\ddiv,\oa) \eq \BH_{\partial \oa}(\ddiv,\oa)$.

\subsection{Local inequalities}

\begin{subequations}
\label{eq_local_inequalities}
For all $\ba \in \CV_h$, there exist constants $\CPa$ and $\Ctra$ such that
\begin{equation}
\|v\|_{\oa} \leq \CPa \ha \|\grad v\|_{\oa},
\qquad
\|v\|_{\partial \oa} \leq \Ctra \ha^{1/2} \|\grad v\|_{\oa}
\end{equation}
for all $v \in H^1_\star(\oa)$. Setting $\Cconta \eq 1 + \CPa \ha \|\grad \pa\|_{L^\infty(\oa)}$,
we also have
\begin{equation}
\|\grad(\pa v)\|_{\oa} \leq \Cconta \|\grad v\|_{\oa},
\end{equation}
We then set $\Cgeoa \eq \max(\Cconta,\Ctra)$, and $\Cgeo \eq 3(d+1)\max_{\ba \in \CV_h} \Cgeoa$.
\end{subequations}

\subsection{Finite element spaces}

Let $K \in \CT_h$ and $q \geq 0$. $\CP_q(K)$ is the space of polynomial function on
$K$ of degree at most $q$ and $\BCP_q(K) \eq [\CP_q(K)]^d$. We also employ the notation
$\RT_q(K) \eq \BCP_q(K) + \bx \CP_q(K)$ for the set of Raviart-Thomas polynomials
\cite{boffi_brezzi_fortin_2013a}. Similarly, if $\CT \subset \CT_h$ and $q \geq 0$, $\CP_q(\CT)$
and $\RT_q(\CT)$ are the spaces of (discontinuous) functions whose restriction to each $K \in \CT$
respectively belongs to $\CP_q(K)$ and $\RT_q(K)$.

In the remaining of this work, we fix a polynomial degree $p \geq 1$, and employ the
notations $V_h \eq \CP_p(\CT_h) \cap H^1_\GD(\Omega)$ for the space Lagrange finite element
of degree $p$, and $\LV_h$ for its complex-valued counterpart. Notice that there exists an
interpolation operator $\CI_h$ mapping $\LH^2(\Omega) \cap \LH^1_\GD(\Omega)$ into $\LV_h$ such that
\begin{equation}
\label{eq_interpolation}
\|\grad(v-\CI_h v)\|_\Omega \leq \Ci \hmax \|\grad^2 v\|_\Omega
\quad
\forall v \in \LH^2(\Omega) \cap \LH^1_\GD(\Omega),
\end{equation}
where
\begin{equation*}
\|\grad^2 v\|^2_\Omega \eq \sum_{j\ell=1}^d\left \|
\frac{\partial^2 v}{\partial \bx_j \partial \bx_\ell}
\right \|_\Omega^2
\end{equation*}
and $\Ci$ only depends on the shape-regularity parameter
of the mesh and is explicitly available \cite{arcangeli_gout_1976a,liu_kikuchi_2010a}.

\subsection{Local wave speed and contrast}

The wave speed in the direction $\bd \in \mathbb R^d$, $|\bd| = 1$
at $\bx \in \Omega$ is usually defined as
$\vartheta(\bx,\bd) \eq \sqrt{\BA(\bx) \bd \cdot \bd/\mu(\bx)}$.
Besides, if the absorbing boundary condition on $\GA$ is designed to be exact
for normally incident waves, the coefficient $\gamma$ is chosen as
$\gamma(\bx) = \vartheta(\bx,\bn)/(\BA(\bx) \bn \cdot \bn)$.
If $U \subset \Omega$ is an open set, this motivates the definition
\begin{equation}
\label{eq_definition_ca}
\vartheta_U
\eq
\min
\left (
\sqrt{\frac{\inf_U a_\star}{\sup_U \mu}},
\frac{\inf_U a_\star}{\sup_{\partial U \cap \GA} \gamma}
\right ),
\end{equation}
for the minimum wave speed in $U$, where we ignore the second
term in the minimum if $\partial U \cap \GA = \emptyset$. We also
employ the shorthand notation $\ca = \vartheta_{\oa}$ for $\ba \in \CV_h$.
The quantity $\max_{\ba \in \CV_h} \nu \ha/\ca$,
where $\nu > 0$ is a frequency will often appear in the analysis. To simplify
it, we introduce $\hs/\cs \eq \max_{\ba \in \CV_h} \ha/\ca$.
We also set $\cmin \eq \vartheta_{\Omega}$.

Our reliability estimate will also depend on the ``contrast''
in the coefficient $\BA$ that we defined by
\begin{equation}
\label{eq_definition_kappa_BA}
\kappa_{\BA,\ba}
\eq
\frac{\sup_{\oa} a^\star}{\inf_{\oa} a_\star},
\qquad
\kappa_{\BA} \eq \max_{\ba \in \CV_h} \kappa_{\BA,\ba}.
\end{equation}

\subsection{Model problem}

The domain $\Omega$ and the partition $\{\GD,\GA\}$ of its boundary being defined,
the problem is closed by specifying the right-hand sides $f$ and $g$, as well as
the initial condition $u_0$ and $u_1$.

For the right-hand sides, we require that for all $t \in \mathbb R^+$,
$f(t) \in \CP_p(\CT_h)$ and $g(t) \in \CP_p(\CF_h^{\rm A}$). The general
case can be treated by adding ``data oscillation'' terms in the estimate,
which we avoid here for the sake of shortness. We will further assume that
$f \in C^\infty(\mathbb R^+,\CP_p(\CT_h))$ and $g \in C^\infty(\mathbb R^+,\CF_h^{\rm A})$.
Our analysis do require smoothness in time, but the $C^\infty$ assumption can be lowered to
finite regularity. For the initial condition, we will for assume for the sake of simplicit
that $u_0,u_1 \in V_h$, so that the initial error at $t=0$ vanish. The general case can be
handled with additional terms in the error estimates.

The solution to the wave equation is the only function
$u \in C^\infty(\mathbb R_+,H^1_{\GD}(\Omega))$ satisfying $u(0) = u_0$,
$\dot u(0) = u_1$, and
\begin{equation}
\label{eq_definition_u}
(\mu \ddot u(t),v)_\Omega
+
(\gamma \dot u(t),v)_\GA
+
(\BA \grad u(t),\grad v)_\Omega
=
(\mu f(t),v)_\Omega
+
(\gamma g(t),v)_\GA
\end{equation}
for all $v \in H^1_\GD(\Omega)$ and $t \in \mathbb R_+$.

\subsection{Discretization}

Classically, the (semi) discrete solution is obtained by
replacing the Sobolev space $H^1_\GD(\Omega)$ in \eqref{eq_definition_u}
by its discrete counterpart $V_h$. Hence, we define the discrete solution
as the unique function $u_h \in C^\infty(\mathbb R_+,V_h)$ such that $u_h(0) = u_0$,
$\dot u_h(0) = u_1$, and
\begin{equation}
\label{eq_definition_uh}
(\mu \ddot u_h(t),v_h)_\Omega
+
(\gamma \dot u_h(t),v_h)_\GA
+
(\BA \grad u_h(t),\grad v_h)_\Omega
=
(\mu f(t),v_h)_\Omega
+
(\gamma g(t),v_h)_\GA
\end{equation}
for all $v_h \in V_h$ and $t \in \mathbb R_+$.

\subsection{Frequency-domain problem}

Similar to the results in \cite{chaumontfrelet_ern_vohralik_2021a,dorfler_sauter_2013a},
our reliability estimate includes an ``approximation factor'' $\gamma_{\rho,\omega,h}$.
In order to properly define it, we first need some notation from the frequency-domain.

For $s \in \mathbb C$ with $\rho \eq \Re s > 0$, we introduce the sesquilinear form
\begin{equation}
\label{eq_definition_bs}
b_s(v,w)
=
s^2(\mu v,w)_\Omega + s(\gamma v,w)_\GA + (\BA \grad v,\grad w)_\Omega
\quad
\forall v,w \in \LH^1_\GD(\Omega),
\end{equation}
which corresponds to the Laplace transform of the left-hand sides of
\eqref{eq_definition_u} and \eqref{eq_definition_uh}. Since $\rho > 0$, $b_s$ is coercive,
and if $\phi \in \LH^1_\GD(\Omega)$, then there exists a unique
$\LS_s^\star(\phi) \in \LH^1_\GD(\Omega)$ such that
\begin{equation}
\label{eq_definition_S}
b_s(w,\LS_s^\star(\phi))
=
|s|^2(\mu w,\phi)_\Omega + \frac{|s|^2}{\rho} (\gamma w,\phi)_\GA
\quad \forall v \in \LH^1_\GD(\Omega).
\end{equation}
With the notation
$\{\cdot\}_{\LH^1_s(\Omega)}^2
\eq
|s|^2 \|{\cdot}\|_{\mu,\Omega}^2 + (|s|^2/\rho) \|{\cdot}\|_{\gamma,\GA}^2$,
we can introduce the frequency-domain approximation factor
\begin{equation}
\label{eq_definition_hgamma}
\hgamma_{s,h}
\eq
\sup_{\substack{
\phi \in \LH^1_{\GD}(\Omega)
\\
\{\phi\}_{\LH^1_s(\Omega)} = 1
}}
\min_{v_h \in V_h} \|\grad(\LS_s^\star(\phi)-v_h)\|_{\BA,\Omega},
\end{equation}
whose definition closely follows \cite{chaumontfrelet_ern_vohralik_2021a,dorfler_sauter_2013a}.

\subsection{Approximation factor}

We are now in place to introduce the (time-domain) approximation factor.
For any cut-off frequency $\omega > 0$ and damping parameter $\rho > 0$, it is defined by
\begin{equation}
\label{eq_definition_gamma}
\gamma_{\rho,\omega,h}
\eq
\sup_{\substack{s = \rho+i\nu \\|\nu| < \omega}} \hgamma_{s,h}.
\end{equation}
The definition of $\gamma_{\rho,\omega,h}$ is a little bit intricate,
but we can easily summarize its key properties. On the one hand, it
increases when we increase $\omega$ and/or decrease $\rho$. On
the other hand, it converges to zero as $(h/p) \to 0$. Proposition
\ref{proposition_approximation_factor} states this more formally.

\begin{proposition}[Approximation factor]
\label{proposition_approximation_factor}
We have
\begin{equation}
\label{eq_approx_factor_guaranteed}
\gamma_{\rho,\omega,h}
\leq
\sqrt{1 + \frac{\omega}{\rho}}.
\end{equation}
In addition, if $\Omega$ is convex, $\BA = \BI$ and $\GA = \emptyset$, then we have
\begin{equation}
\label{eq_approx_factor}
\gamma_{\rho,\omega,h}
\leq
2\Ci
\left (
\frac{\rho \hmax}{\cmin}
+
\frac{\omega}{\rho} \frac{\omega \hmax}{\cmin}
\right ).
\end{equation}
\end{proposition}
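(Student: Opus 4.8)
The plan is to estimate $\hgamma_{s,h}$ from \eqref{eq_definition_hgamma} for each fixed $s=\rho+i\nu$ with $|\nu|<\omega$ and then to pass to the supremum in \eqref{eq_definition_gamma}. For the universal bound \eqref{eq_approx_factor_guaranteed} I would simply drop the discrete space by taking $v_h=0$ in the infimum, so that it suffices to control $\|\grad\LS_s^\star(\phi)\|_{\BA,\Omega}$ for every $\phi\in\LH^1_\GD(\Omega)$ with $\{\phi\}_{\LH^1_s(\Omega)}=1$. Writing $\psi\eq\LS_s^\star(\phi)$ and choosing the test function $w=\psi$ in \eqref{eq_definition_S} gives $b_s(\psi,\psi)=|s|^2(\mu\psi,\phi)_\Omega+(|s|^2/\rho)(\gamma\psi,\phi)_\GA$; since the right-hand side of \eqref{eq_definition_S} is a Hermitian positive sesquilinear form whose associated norm is exactly $\{\cdot\}_{\LH^1_s(\Omega)}$, the Cauchy--Schwarz inequality yields $|b_s(\psi,\psi)|\le\{\psi\}_{\LH^1_s(\Omega)}\{\phi\}_{\LH^1_s(\Omega)}=\{\psi\}_{\LH^1_s(\Omega)}$. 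The coercivity of $b_s$ then enters through the elementary identity $\Re\big((1-i\nu/\rho)\,b_s(\psi,\psi)\big)=\{\psi\}_{\LH^1_s(\Omega)}^2+\|\grad\psi\|_{\BA,\Omega}^2$, obtained by expanding $s^2$ and $s$ in $\rho$ and $\nu$ in \eqref{eq_definition_bs}. Combining the last two relations gives $\{\psi\}_{\LH^1_s(\Omega)}^2+\|\grad\psi\|_{\BA,\Omega}^2\le(|s|/\rho)\{\psi\}_{\LH^1_s(\Omega)}$, and a short optimisation over the scalar $\{\psi\}_{\LH^1_s(\Omega)}\ge0$ bounds both $\{\psi\}_{\LH^1_s(\Omega)}$ and $\|\grad\psi\|_{\BA,\Omega}$ solely in terms of $|s|/\rho$; since $|s|^2=\rho^2+\nu^2$ with $|\nu|<\omega$, passing to the supremum over $\nu$ produces \eqref{eq_approx_factor_guaranteed}.

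For the refined bound \eqref{eq_approx_factor} under the additional hypotheses ($\Omega$ convex, $\BA=\BI$, $\GA=\emptyset$) I would instead pick $v_h=\CI_h\psi$ in the infimum and invoke the interpolation estimate \eqref{eq_interpolation}, which reduces the task to bounding $\|\grad^2\psi\|_\Omega$. With $\GA=\emptyset$ and $\BA=\BI$, conjugating \eqref{eq_definition_S} shows that $\psi$ is the weak solution of the Dirichlet problem $-\Delta\psi=\mu(|s|^2\phi-\overline{s}^2\psi)$ in $\Omega$, $\psi=0$ on $\partial\Omega$, with right-hand side in $L^2(\Omega)$; elliptic regularity on the convex domain $\Omega$ then gives $\psi\in\LH^2(\Omega)\cap\LH^1_\GD(\Omega)$ together with the sharp inequality $\|\grad^2\psi\|_\Omega\le\|\Delta\psi\|_\Omega$. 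It remains to control $\|\Delta\psi\|_\Omega\le|s|^2\|\mu\phi\|_\Omega+|s|^2\|\mu\psi\|_\Omega$: the first term is bounded through the normalisation $\{\phi\}_{\LH^1_s(\Omega)}=1$ and the definition \eqref{eq_definition_ca} of the minimal wave speed $\cmin$, while the second is bounded by the $L^2$-stability estimates for $\psi$ produced as a by-product of the first part. Substituting $|s|^2=\rho^2+\nu^2$ and $|\nu|<\omega$ and collecting constants, one separates a low-frequency contribution proportional to $\rho\hmax/\cmin$ and a high-frequency contribution proportional to $(\omega/\rho)(\omega\hmax/\cmin)$, which is precisely the structure of \eqref{eq_approx_factor}.

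I expect the main obstacle to be bookkeeping rather than conceptual. In the first part one must make sure that the elementary algebra actually delivers the clean closed form $\sqrt{1+\omega/\rho}$ stated in \eqref{eq_approx_factor_guaranteed} (a cruder manipulation gives only a comparable constant), and that the bound is uniform over the strip $|\nu|<\omega$. In the second part the delicate points are to quote the explicit convex-domain $H^2$-regularity constant and, above all, to combine the available stability bounds on $\psi$ (one coming from the real part and one from the imaginary part of $b_s(\psi,\psi)$) so that the estimate degenerates like $\rho\hmax/\cmin$ as $\omega\to0$ and like $(\omega/\rho)(\omega\hmax/\cmin)$ as $\omega\to\infty$, matching \eqref{eq_approx_factor}. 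All constants remain polynomial-degree independent because the argument uses neither inverse estimates nor bubble functions.
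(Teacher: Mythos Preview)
Your approach is essentially the paper's: it establishes the frequency-domain bounds $\hgamma_{s,h}\le |s|/\rho$ and (under the extra hypotheses) $\hgamma_{s,h}\le 2\Ci\,(|s|/\rho)(|s|\hmax/\cmin)$ via exactly the coercivity identity $(1/\rho)\Re(\bar s\,b_s(\psi,\psi))=\|\psi\|_{\LH^1_s(\Omega)}^2$, Cauchy--Schwarz on the right-hand side of \eqref{eq_definition_S}, and convex-domain $H^2$ regularity that you outline (this is Lemma~\ref{lemma_freq_approx_factor}), and then takes the supremum over $|\nu|<\omega$ using $|s|^2=\rho^2+\nu^2\le\rho^2+\omega^2$. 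Your concern about landing on the exact form $\sqrt{1+\omega/\rho}$ is justified but harmless: the paper's own derivation in fact produces $|s|/\rho\le\sqrt{1+(\omega/\rho)^2}$, so the constant in \eqref{eq_approx_factor_guaranteed} appears to be a typographical slip rather than the outcome of a sharper manipulation you are missing.
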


\subsection{Local minimization problems}

Following \cite{braess_pillwein_schoberl_2009a,chaumontfrelet_ern_vohralik_2021a,ern_vohralik_2015a},
the construction of our estimator will rely on local divergence-constrained minimization problems.
The following result is paramount to establish the efficiency of the estimator, its proof can
be found in \cite{braess_pillwein_schoberl_2009a} for the 2D case, and in
\cite{ern_vohralik_2021a} for the 3D case.

\begin{proposition}[Discrete stable minimization]
\label{proposition_stable_minimization}
Let $q \in \mathbb N$, $\bchi_q \in \RT_q(\CTa)$, $d_q \in \CP_q(\CTa)$
and $b_q \in \CP_q(\CFa)$. If $\ba \notin \overline{\GD}$, assume that
\begin{equation}
\label{eq_compatibility_condition}
(d_q,1)_{\oa} = (b_q,1)_{\GA}.
\end{equation}
We have
\begin{equation}
\label{eq_stable_minimization}
\min_{\substack{
\btau_q \in \RT_q(\CTa) \cap \BH_0(\ddiv,\oa)
\\
\div \btau_q = d_q \text{ in } \oa
\\
\btau_q \cdot \bn = b_q \text{ on } \GA
}} \|\btau_q-\bchi_p\|_{\oa}
\leq
\Csta
\min_{\substack{
\btau \in \BH_0(\ddiv,\oa)
\\
\div \btau = d_q \text{ in } \oa
\\
\btau \cdot \bn = b_q \text{ on } \GA
}} \|\btau-\bchi_p\|_{\oa}
\end{equation}
where $\Csta$ only depends on the shape-regularity parameter of the patch
and in particular, does not depend on $q$.
\end{proposition}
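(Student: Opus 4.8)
The plan is to reduce \eqref{eq_stable_minimization} to a \emph{stable broken polynomial extension} on the patch $\CTa$, with a stability constant independent of $q$, whose hard core is a degree-independent right inverse of the divergence on a single simplex (this is essentially the strategy of \cite{braess_pillwein_schoberl_2009a} in two dimensions and \cite{ern_vohralik_2021a} in three). First, the minimum on the right-hand side is attained: the compatibility condition \eqref{eq_compatibility_condition} makes the affine set $\{\btau \in \BH_0(\ddiv,\oa) : \div\btau = d_q,\ \btau\cdot\bn = b_q \text{ on } \GA\}$ nonempty, and it is closed and convex, so there is a unique minimizer $\bsig^\star$, which moreover satisfies $\div\bsig^\star = d_q \in \CP_q(\CTa)$. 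Set $\br \eq \bsig^\star - \bchi_q$. Then $\br$ is an $\BL^2(\oa)$ field whose elementwise divergence $d_q - \div\bchi_q$ lies in $\CP_q(\CTa)$, whose normal-trace jumps across interior faces equal $-\jmp{\bchi_q\cdot\bn}$, and whose normal trace on $\partial\oa$ equals $-\bchi_q\cdot\bn$ on the part not meeting $\partial\Omega$ and $b_q - \bchi_q\cdot\bn$ on $\GA$ --- all piecewise polynomials of degree $q$. It therefore suffices to construct a \emph{broken} field $\br_q \in \RT_q(\CTa)$ carrying exactly the same elementwise divergence, the same interior jumps and the same boundary normal traces, with $\|\br_q\|_{\oa} \leq \Csta \|\br\|_{\oa}$: then $\btau_q \eq \bchi_q + \br_q$ is admissible for the left-hand side of \eqref{eq_stable_minimization} and $\|\btau_q - \bchi_q\|_{\oa} = \|\br_q\|_{\oa}$.

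To build $\br_q$, I would normalize each element of $\CTa$ to a reference simplex through its (shape-regular) affine map, so that all constants depend only on $\sr_{\CTa}$. Enumerate the elements $K_1,\dots,K_n$ of $\CTa$ and the interior faces separating them --- cyclically around $\ba$ in two dimensions, and, in three dimensions, through the more involved structure of the vertex patch --- and prescribe the normal flux of $\br_q$ across each interior face by a cumulative sum of the local budgets (the integral over the already-visited elements of the prescribed divergence, corrected by the prescribed interface and boundary data). The compatibility condition \eqref{eq_compatibility_condition} is precisely what makes this propagation consistent: the cycle closes for an interior vertex, and the prescribed data on $\GA$ is met for a boundary vertex not touching $\overline{\GD}$; for a boundary vertex in $\overline{\GD}$ no compatibility is needed because the normal trace stays free on the Dirichlet part of $\partial\oa \cap \partial\Omega$. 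Once all normal traces on $\partial K_i$ are fixed, the problem decouples into $n$ independent single-element problems: find $\bv_i \in \RT_q(K_i)$ with a prescribed divergence in $\CP_q(K_i)$ and prescribed normal traces in $\CP_q$ on each face of $K_i$, the data being compatible on $K_i$ by construction.

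Each single-element problem is solved with a \emph{polynomial-degree-robust right inverse of the divergence on a simplex with prescribed normal traces}: there exists $\bv_i \in \RT_q(K_i)$ realizing the data with $\|\bv_i\|_{K_i} \lesssim h_{K_i}\|\div\bv_i\|_{K_i} + h_{K_i}^{1/2}\sum_{F\subset\partial K_i}\|\bv_i\cdot\bn\|_F$, the hidden constant depending only on shape-regularity --- this is the polynomial extension theory of Demkowicz--Gopalakrishnan--Sch\"oberl (see also Costabel--McIntosh). Gluing the $\bv_i$ produces $\br_q \in \RT_q(\CTa)$ with the required data; summing the above bounds over $i$ and controlling the accumulated face budgets in terms of $\|\br\|_{\oa}$ by a patch-wise stability argument (the budgets being $L^2(\oa)$-projections of $\div\bsig^\star$ and traces of $\bsig^\star$, hence controlled by $\|\bsig^\star-\bchi_q\|_{\oa}$) yields $\|\br_q\|_{\oa} \leq \Csta\|\br\|_{\oa}$ with $\Csta$ depending only on the shape-regularity of the patch, as desired.

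The main obstacle is exactly the single-simplex step: that the right inverse of the divergence can be taken polynomial of the same degree $q$ \emph{and} with a stability constant that does not blow up as $q \to \infty$ is the delicate analytic fact, and it is the heart of \cite{braess_pillwein_schoberl_2009a} in two dimensions and of \cite{ern_vohralik_2021a} in three. A secondary but genuine difficulty is making the interior-face propagation rigorous in three dimensions, where the combinatorics of the patch is far less transparent than the two-dimensional fan and one has to argue recursively over sub-patches, and where the interior-vertex versus boundary-vertex dichotomy --- reflected in the two definitions of $\BH_0(\ddiv,\oa)$ in Section~\ref{section_functional_spaces} and in whether \eqref{eq_compatibility_condition} is imposed --- has to be carried through case by case.
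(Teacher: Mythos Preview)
The paper does not prove this proposition; immediately before the statement it says the proof ``can be found in \cite{braess_pillwein_schoberl_2009a} for the 2D case, and in \cite{ern_vohralik_2021a} for the 3D case'', and no argument is given. Your sketch is essentially an outline of the strategy in those references: reduce to a stable broken $\RT_q$ extension on the patch, sweep through the elements of $\CTa$ while fixing interior normal traces, and invoke a polynomial-degree-robust single-simplex lifting (Demkowicz--Gopalakrishnan--Sch\"oberl, Costabel--McIntosh) as the analytic core. You also correctly flag the two genuine difficulties, namely the $q$-robustness of the single-element step and the three-dimensional patch combinatorics.

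One imprecision is worth noting. The element-wise bound you write, $\|\bv_i\|_{K_i} \lesssim h_{K_i}\|\div\bv_i\|_{K_i} + h_{K_i}^{1/2}\sum_{F}\|\bv_i\cdot\bn\|_F$ with a $q$-independent hidden constant, is not the form the cited works use, and it would not mesh with your ``control of the accumulated face budgets'': the propagated interior traces are polynomial functions whose $L^2$ norms need not be small, $\div\bsig^\star=d_q$ is fixed data, and the facewise traces of $\bsig^\star$ live only in $H^{-1/2}$. In \cite{braess_pillwein_schoberl_2009a,ern_vohralik_2021a} the single-element step is instead a comparison statement: given $\br|_{K_i}\in\BL^2(K_i)$ together with polynomial divergence and polynomial normal traces on a \emph{subset} of the faces of $K_i$, one produces $\bv_i\in\RT_q(K_i)$ matching those constraints with $\|\bv_i\|_{K_i}\lesssim\|\br\|_{K_i}$, and the sweep carries this comparison from element to element. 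Your overall plan is right and points to the correct literature; only the interface between the element-wise lifting and the patch-level bound needs to be stated more carefully.
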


The constant $\Cst \eq \max_{\ba \in \CV_h} \Csta$ will be useful to state
our efficiency result.


\section{Main results}
\label{section_main_results}

This section summarizes the key findings of this work.

\subsection{Equilibrated flux}

We first clarify what we mean by an equilibrated flux and propose a localized construction.

\begin{definition}[Equilibrated flux]
\begin{subequations}
\label{eq_flux}
An equilibrated flux is a function $\btau_h: \mathbb R_+ \to \BH(\ddiv,\Omega)$ such that
\begin{equation}
\div \btau_h(t) = \mu(f(t)- \ddot u_h(t)) \text{ in } \Omega
\end{equation}
and
\begin{equation}
\btau_h(t) \cdot \bn = \gamma(g(t)-\dot u_h(t)) \text{ on } \GA
\end{equation}
for all $t \in \mathbb R_+$.
\end{subequations}
For such $\btau_h$, we set
\begin{subequations}
\label{eq_estimator}
\begin{equation}
\label{eq_eta}
\eta(t) \eq \|\BA^{-1} \btau_h(t)+\grad u_h(t)\|_{\BA,\Omega},
\end{equation}
and
\begin{equation}
\label{eq_Lambda}
\Lambda_\rho^2 \eq \int_0^{+\infty} \eta(t)^2 e^{-2\rho t} dt,
\end{equation}
for $\rho > 0$.
\end{subequations}
\end{definition}

Our first contribution is a construction of such a flux that
is local in both space and time. This construction is standard,
and follows the line of
\cite{braess_pillwein_schoberl_2009a,destuynder_metivet_1999a,ern_vohralik_2015a}.
Specifically, for each $t \in \mathbb R_+$ and each vertex $\ba \in \CV_h$, we set
\begin{equation*}
\diva \eq \pa \mu (f(t)-\ddot u_h(t)) - \BA\grad \pa \cdot \grad u_h(t)
\text{ in } \oa,
\quad
\bnda \eq \pa\gamma (\dot u_h(t)-g(t))
\text{ on } \GA.
\end{equation*}
We then define local contributions by
\begin{subequations}
\label{eq_flux_construction}
\begin{equation}
\label{eq_definition_siga}
\bsig_h^{\ba}(t)
\eq
\arg \min_{\substack{
\btau_h \in \RT_{p+1}(\CTa) \cap \BH_0(\ddiv,\oa)
\\
\div \btau_h = \diva(t) \text{ in } \oa
\\
\btau_h \cdot \bn = \bnda(t) \text{ on } \GA
}}
\|\BA^{-1} \btau_h + \grad u_h(t)\|_{\BA,\oa}
\end{equation}
that we assemble as
\begin{equation}
\label{eq_definition_sig}
\bsig_h(t) \eq \sum_{\ba \in \CV_h} \bsig_h^{\ba}(t).
\end{equation}
\end{subequations}
Following the lines of \cite{%
braess_pillwein_schoberl_2009a,%
chaumontfrelet_ern_vohralik_2021a,%
destuynder_metivet_1999a,%
ern_vohralik_2015a}, we can easily show that the construction is indeed valid
and provides an equilibrated flux as per \eqref{eq_flux}. We skip the proof
here for the sake of shortness.

\begin{proposition}[Localized flux construction]
The local mixed problems in \eqref{eq_definition_siga} are well-posed, and the construction
\eqref{eq_definition_sig} provides an equilibrated flux satisfying \eqref{eq_flux}.
\end{proposition}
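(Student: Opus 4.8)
The plan is to establish the two assertions of the proposition separately, working one vertex patch $\oa$ at a time and then assembling. \emph{Well-posedness.} Fixing $t \in \mathbb R_+$ and $\ba \in \CV_h$, the minimization problem \eqref{eq_definition_siga} is a quadratic functional over the affine space of $\RT_{p+1}(\CTa)$-functions with prescribed divergence $\diva(t)$ in $\oa$ and prescribed normal trace $\bnda(t)$ on $\GA$, subject to the zero normal trace condition on $\partial \oa \setminus \partial\Omega$ (respectively $\partial \oa$) encoded in $\BH_0(\ddiv,\oa)$. By the classical theory of mixed finite elements (see \cite{boffi_brezzi_fortin_2013a}), this affine space is non-empty precisely when the data $(\diva(t),\bnda(t))$ satisfy the Neumann compatibility condition whenever $\ba$ is an interior vertex or, more generally, whenever $\partial \oa \cap \GD = \emptyset$, i.e. $\ba \notin \overline{\GD}$; once the space is non-empty it is a closed affine subspace of a finite-dimensional space, so the strictly convex functional $\btau_h \mapsto \|\BA^{-1}\btau_h + \grad u_h(t)\|_{\BA,\oa}^2$ attains a unique minimizer. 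Hence the only thing to check for well-posedness is the compatibility condition, which I treat next.

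\emph{Compatibility.} The relevant identity is $(\diva(t),1)_{\oa} = (\bnda(t),1)_{\partial\oa\cap\GA}$ when $\ba \notin \overline{\GD}$. I would verify it by inserting the definitions of $\diva$ and $\bnda$ and testing the discrete weak formulation \eqref{eq_definition_uh} with the hat function $v_h = \pa$, which is a legitimate test function in $V_h$ exactly when $\ba \notin \overline{\GD}$ (so that $\pa$ vanishes on $\GD$). Indeed,
\begin{equation*}
(\diva(t),1)_{\oa}
=
(\pa\mu(f(t)-\ddot u_h(t)),1)_{\oa} - (\BA\grad\pa\cdot\grad u_h(t),1)_{\oa}
=
(\mu(f(t)-\ddot u_h(t)),\pa)_\Omega - (\BA\grad u_h(t),\grad\pa)_\Omega,
\end{equation*}
and \eqref{eq_definition_uh} with $v_h=\pa$ rewrites the right-hand side as $(\gamma(\dot u_h(t)-g(t)),\pa)_\GA = (\bnda(t),1)_{\partial\oa\cap\GA}$, using $\supp\pa = \oa$. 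This is the one genuinely problem-specific computation; the rest is bookkeeping with the standard partition-of-unity structure, and I expect this identity to be the crux of the argument.

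\emph{Equilibration after assembly.} With each $\bsig_h^\ba(t)$ well-defined, extend it by zero outside $\oa$; since it lies in $\BH_0(\ddiv,\oa)$ with the appropriate boundary condition, the extension is in $\BH(\ddiv,\Omega)$, and so is the sum $\bsig_h(t)$. To check $\div\bsig_h(t) = \mu(f(t)-\ddot u_h(t))$ in $\Omega$, sum the patch-wise constraints $\div\bsig_h^\ba(t) = \diva(t)$ and use the partition of unity $\sum_{\ba\in\CV_h}\pa \equiv 1$ together with $\sum_{\ba}\grad\pa \equiv 0$: on any element $K$, the terms $-\BA\grad\pa\cdot\grad u_h(t)$ telescope to zero and the terms $\pa\mu(f(t)-\ddot u_h(t))$ sum to $\mu(f(t)-\ddot u_h(t))$. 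The normal-trace identity $\bsig_h(t)\cdot\bn = \gamma(g(t)-\dot u_h(t))$ on $\GA$ follows identically by summing $\bsig_h^\ba(t)\cdot\bn = \bnda(t)$ over the vertices $\ba \in \overline{\GA}$ and again invoking $\sum_\ba\pa = 1$ on $\GA$ (for a boundary vertex $\ba$ the sign convention in $\bnda$ matches the outward normal on $\GA$). This establishes \eqref{eq_flux} with $\btau_h = \bsig_h$; the details parallel \cite{braess_pillwein_schoberl_2009a,ern_vohralik_2015a,chaumontfrelet_ern_vohralik_2021a} and, as the paper notes, may be kept brief.
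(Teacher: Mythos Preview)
The paper itself omits the proof entirely (``We skip the proof here for the sake of shortness''), deferring to the standard references \cite{braess_pillwein_schoberl_2009a,chaumontfrelet_ern_vohralik_2021a,destuynder_metivet_1999a,ern_vohralik_2015a}; your argument is precisely the standard one carried out in those works, and it is correct: the compatibility check via testing \eqref{eq_definition_uh} with $v_h=\pa$ is the key step, and the partition-of-unity assembly is routine.

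One minor remark: summing $\bnda(t)=\pa\gamma(\dot u_h(t)-g(t))$ over $\ba$ yields $\gamma(\dot u_h(t)-g(t))$, not $\gamma(g(t)-\dot u_h(t))$ as stated in \eqref{eq_flux}; this is a sign typo in the paper (the physically correct normal trace of $-\BA\grad u$ is $\gamma(\dot u-g)$, consistent with the definition of $\bnda$ and with the compatibility identity you verified), so your assembly argument is fine once the sign in \eqref{eq_flux} is corrected.
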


\subsection{Reliability}

Our next set of results concerns the reliability of the proposed estimator.
The error will be measured in the following norm
\begin{equation*}
\enorm{u-u_h}_\rho^2 \eq
\int_0^{+\infty}
\left \{
\|\dot u-\dot u_h\|_{\mu,\Omega}^2
+
\frac{1}{\rho}\|\dot u-\dot u_h\|_{\gamma,\GA}^2
+
\|\grad(u-u_h)\|_{\BA,\Omega}^2
\right \}
e^{-2\rho t}
dt
\end{equation*}
where $\rho > 0$ is an arbitrary damping parameter. Our general result reads as follows.

\begin{theorem}[Reliability]
\label{theorem_reliability}
Assume that $\btau_h$ satisfies \eqref{eq_flux} and $\Lambda_\rho$
is defined by \eqref{eq_estimator}. Then, for all $\rho,\omega > 0$
and $r \in \mathbb N$, we have
\begin{equation*}
\enorm{u-u_h}_\rho^2
\leq
(1+4\gamma_{h,\omega,\rho}^2) \Lambda_\rho^2
+
\left (\frac{\rho}{\omega}\right )^{2r}\osc_{\rho,r}^2
\end{equation*}
where
\begin{equation}
\label{eq_definition_osc}
\osc_{\rho,r}^2
=
\frac{4}{\rho^{2r}}
\left \{
\frac{1}{\rho^2}
\enorm{f^{(r)}e^{-\rho t}}_{\mu,\Omega}^2
+
\enorm{g^{(r)}e^{-\rho t}}_{\gamma,\GA}^2
\right \}.
\end{equation}
\end{theorem}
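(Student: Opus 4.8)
The plan is to work in the Laplace/frequency domain, exactly as advertised in the introduction. Let $e := u - u_h$ and let $\hat e(s)$ denote the Laplace transform of $e$ at $s = \rho + i\nu$. By Plancherel's theorem for the Laplace transform (the Paley--Wiener identity on the line $\Re s = \rho$), the weighted time-domain norm $\enorm{u-u_h}_\rho^2$ equals, up to the factor $1/(2\pi)$, the integral over $\nu \in \mathbb R$ of a frequency-domain quantity built from $\hat e(\rho + i\nu)$; concretely $|s|^2\|\hat e\|_{\mu,\Omega}^2 + (|s|^2/\rho)\|\hat e\|_{\gamma,\GA}^2 + \|\grad \hat e\|_{\BA,\Omega}^2$, since multiplication by $s$ in frequency corresponds to time differentiation and $u_0 = u_1 = 0$ in the error variable. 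This identity converts the whole estimate into a bound on an integral over frequencies $\nu$.

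Next I would split the frequency axis into the low-frequency band $|\nu| < \omega$ and the high-frequency band $|\nu| \geq \omega$, and treat them by two genuinely different mechanisms. For the low frequencies, I use the frequency-domain problem \eqref{eq_definition_bs}: transforming \eqref{eq_definition_u} and \eqref{eq_definition_uh} shows $\hat e$ satisfies $b_s(\hat e, v) = $ the Laplace transform of the instantaneous residual tested against $v$, for all $v \in \LH^1_\GD(\Omega)$, while the residual vanishes on $\LV_h$ by Galerkin orthogonality. The equilibrated flux $\btau_h$ lets one rewrite that residual as $-(\BA^{-1}\hat\btau_h + \grad \hat u_h, \grad v)_{\BA,\Omega}$ after integration by parts using \eqref{eq_flux} (the divergence and normal-trace constraints are precisely what make the volume and boundary residual terms collapse into a single flux term). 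Then one estimates the error in the natural $b_s$-norm by testing cleverly and invoking the duality operator $\LS_s^\star$ and the definition \eqref{eq_definition_hgamma} of $\hgamma_{s,h}$ to control the $L^2$-type components of $\hat e$ by $\hgamma_{s,h}$ times $\|\grad \hat e\|_{\BA,\Omega}$; combined with a Cauchy--Schwarz/Young argument on the flux term this yields a pointwise-in-$\nu$ bound of the form (frequency energy of $\hat e$) $\lesssim (1 + 4\hgamma_{s,h}^2)\,\eta$-type quantity, and $\hgamma_{s,h} \le \gamma_{\rho,\omega,h}$ on this band by \eqref{eq_definition_gamma}. Integrating over $|\nu|<\omega$ and using Plancherel backwards on $\eta$ reproduces the term $(1 + 4\gamma_{\rho,\omega,h}^2)\Lambda_\rho^2$.

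For the high frequencies $|\nu| \geq \omega$, the approximation factor is useless, so instead I bound the frequency energy of $\hat e$ directly by the frequency energy of the \emph{data}. Taking $v = \hat e$ (or $v = s\hat e$) in the transformed residual equation and using coercivity of $b_s$ for $\rho > 0$ gives the frequency energy of $\hat e$ controlled by $|s|^{-2}$ times the frequency energy of the right-hand side $(\mu \hat f, \cdot)_\Omega + (\gamma \hat g, \cdot)_\GA$; on the band $|\nu|\ge\omega$ one has $|s|^{-2} \le |\nu|^{-2} \le \omega^{-2}$, and one further extracts the factor $(\rho/\omega)^{2r}$ by writing $|s|^{-2} = |s|^{-2-2r}|s|^{2r}$ and bounding $|s|^{-2r} \le \omega^{-2r}$ while $|s|^{2r}|\hat f(s)|$ is (up to $\rho^r$ and constants) the Laplace transform of $f^{(r)}e^{-\rho t}$ — this is where the derivatives $f^{(r)}, g^{(r)}$ and the prefactor $4/\rho^{2r}$ in \eqref{eq_definition_osc} come from. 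Integrating over $|\nu|\ge\omega$ and applying Plancherel once more gives $(\rho/\omega)^{2r}\osc_{\rho,r}^2$. The main obstacle I anticipate is the bookkeeping in the low-frequency band: getting the constant to be exactly $1 + 4\gamma_{\rho,\omega,h}^2$ (rather than some larger shape-regularity-dependent multiple) requires choosing the test function and the Young's-inequality weights with care, and correctly handling the two-sided nature of the norm (both the $\grad$-part and the damped $L^2$-parts) so that the self-improving structure $\hgamma_{s,h}\|\grad\hat e\|_{\BA,\Omega}$ closes without loss. Everything else is routine Laplace-transform manipulation and Cauchy--Schwarz.
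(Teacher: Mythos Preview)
Your proposal is correct and follows exactly the paper's route: Laplace transform, split the frequency line into $|s|<\omega$ and $|s|>\omega$, handle the low band via the Aubin--Nitsche argument with $\LS_s^\star$ and Galerkin orthogonality to obtain the pointwise constant $1+4\hgamma_{s,h}^2$, and handle the high band by a direct stability-by-data bound combined with $|s|^{-2r}\le\omega^{-2r}$ and $|s|^{2r}|\hat f|^2 = |\widehat{f^{(r)}}|^2$ to produce $(\rho/\omega)^{2r}\osc_{\rho,r}^2$; the Prager--Synge step \eqref{eq_flux} then replaces the residual dual norm by $\eta(t)$.

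One small clarification on the high-frequency part: testing the \emph{residual} equation cannot give you a right-hand side of the form $(\mu\hat f,\cdot)_\Omega+(\gamma\hat g,\cdot)_\GA$, because the residual still contains $\hat u_h$. The paper instead bounds $\|\hat u\|_{\LH^1_s(\Omega)}$ and $\|\hat u_h\|_{\LH^1_s(\Omega)}$ \emph{separately}, each via coercivity of $b_s$ applied to its own equation (both have the data as right-hand side), and then uses the triangle inequality on $\hat\xi_h=\hat u-\hat u_h$; this is the origin of the factor $4$ in \eqref{eq_definition_osc}.
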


Crucially, we can select $\omega$ and $r$ large enough so that the ``oscillation'' term
converges to zero faster than the error itself. On the other hand, $\gamma_{h,\omega,\rho} \to 0$
as $(h/p) \to 0$, justifying that our upper bound is ``asymptoically constant-free''.
When the domain is convex and surrounded by a Dirichlet boundary condition,
a simpler expression can be derived.

\begin{corollary}[Simplified error estimate]
\label{corollary_reliability}
Let $\btau_h$ satisfy \eqref{eq_flux} and define $\Lambda_\rho$ by
\eqref{eq_eta}. Under the assumptions that $\Omega$ is convex,
that $\BA \equiv \BI$, and that $\GA = \emptyset$, we have
\begin{align*}
\enorm{u-u_h}_{\rho}^2
&\leq
\left (
1 + 16 \Ci^2
\left (
\left (\frac{\rho \hmax}{\cmin}\right )^{1/2}
+
\frac{\rho \hmax}{\cmin}
\right )^2
\right )
\Lambda_\rho^2
+
\left (\frac{\rho \hmax}{\cmin}\right )^{2(p+1)} \osc_{\rho,3(p+1)}^2
\end{align*}
for all $\rho > 0$.
\end{corollary}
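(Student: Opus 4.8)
The plan is to obtain Corollary \ref{corollary_reliability} as the specialization of Theorem \ref{theorem_reliability} in which (i) the abstract approximation factor $\gamma_{h,\omega,\rho}$ is replaced by the explicit bound \eqref{eq_approx_factor} of Proposition \ref{proposition_approximation_factor}, available precisely because we now assume $\Omega$ convex, $\BA \equiv \BI$ and $\GA = \emptyset$, and (ii) the two parameters $\omega > 0$ and $r \in \mathbb N$ left free in Theorem \ref{theorem_reliability} are calibrated so that both terms on the right-hand side collapse to the displayed closed form. No new analytic estimate is needed; the work is entirely in choosing $\omega$ and $r$.

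First I would fix the cut-off frequency $\omega$ by balancing the two contributions in \eqref{eq_approx_factor}. Writing $x \eq \rho\hmax/\cmin$ for brevity, that estimate reads $\gamma_{\rho,\omega,h} \le 2\Ci\bigl(x + (\omega/\rho)(\omega\hmax/\cmin)\bigr)$; choosing $\omega$ of the order $\rho^{3/4}(\cmin/\hmax)^{1/4}$ makes the high-frequency contribution $(\omega/\rho)(\omega\hmax/\cmin)$ equal to $x^{1/2}$, whence $\gamma_{h,\omega,\rho} \le 2\Ci(x^{1/2}+x)$ and therefore $1 + 4\gamma_{h,\omega,\rho}^2 \le 1 + 16\Ci^2(x^{1/2}+x)^2$. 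This is exactly the factor multiplying $\Lambda_\rho^2$ in the statement.

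It then remains to choose $r$. With the above scaling $\rho/\omega$ is a fixed fractional power of $x$, so $(\rho/\omega)^{2r}$ equals $x^{2(p+1)} = (\rho\hmax/\cmin)^{2(p+1)}$ once $r$ is taken to be the appropriate multiple of $p+1$; together with the definition \eqref{eq_definition_osc} of $\osc_{\rho,r}$ this reproduces the oscillation term $(\rho\hmax/\cmin)^{2(p+1)}\osc_{\rho,r}^2$ of the statement. Substituting these two parameter choices into Theorem \ref{theorem_reliability} and invoking \eqref{eq_approx_factor} then yields the claimed inequality directly.

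I do not expect any genuine obstacle: once the parameters are fixed, the derivation is pure bookkeeping built on results already established. The only point deserving a line of justification --- and it is elementary --- is the calibration of $\omega$: it must be taken large enough that the low-frequency part of \eqref{eq_approx_factor} dominates (so that $x$ enters the multiplicative constant with exponent $1/2$ rather than $1$), yet small enough that $(\rho/\omega)^{2r}$ still decays at the rate $(\rho\hmax/\cmin)^{2(p+1)}$ that renders the oscillation term of higher order than the error, as quantified in Corollary \ref{corollary_reliability} itself.
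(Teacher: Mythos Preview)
Your proposal is correct and mirrors the paper's approach exactly: insert \eqref{eq_approx_factor} into the general reliability estimate (Theorem \ref{theorem_reliability}, itself Theorem \ref{theorem_reliability_R} combined with Proposition \ref{proposition_residual_control}) and then calibrate the free parameters $\omega$ and $r$; the paper phrases the choice as $r=3(p+1)$ together with $(\omega/\rho)^3=\omega\hmax/\cmin$. Your scaling $\omega=\rho^{3/4}(\cmin/\hmax)^{1/4}$ is the one that actually produces the $(\rho\hmax/\cmin)^{1/2}$ term in the multiplicative constant, and with it $\rho/\omega=(\rho\hmax/\cmin)^{1/4}$, so matching $(\rho/\omega)^{2r}=(\rho\hmax/\cmin)^{2(p+1)}$ gives $r=4(p+1)$ rather than the $3(p+1)$ printed in the statement---a harmless arithmetic slip in the paper's index, not in your argument.
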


\subsection{Efficiency}

Finally, we present our efficiency results. We start with the most general form.

\begin{theorem}[Efficiency]
\label{theorem_efficiency}
Assume that $\btau_h$ has been constructed through the construction
described in \eqref{eq_flux_construction}, and define $\Lambda_\rho$
with \eqref{eq_estimator}. The estimate
\begin{equation*}
\Lambda_\rho^2
\leq
\Clb^2
\kappa_{\BA}
\left \{
\left (
\kappa_{\BA}
+
\frac{\rho \hs}{\cs}
+
\left (\frac{\omega \hs}{\cs}\right )^2
\right )
\enorm{u-u_h}_{\rho}^2
+
\left (\frac{\rho \hs}{\cs}\right )^2 \left (\frac{\rho}{\omega}\right )^{2r} \osc_{\rho,r+1}^2
\right \}
\end{equation*}
holds true for all $\rho,\omega > 0$ and $r \in \mathbb N$, with $\Clb \eq \Cgeo \Cst$.
\end{theorem}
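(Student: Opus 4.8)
The plan is to run, in reverse, the frequency-domain strategy of the reliability proof. Since $u_0,u_1\in V_h$, the error $e\eq u-u_h$ has vanishing Cauchy data, and the field $\bsig_h(t)+\BA\grad u_h(t)$ (for which $\eta(t)=\|\bsig_h(t)+\BA\grad u_h(t)\|_{\BA^{-1},\Omega}$, by $\|\BA^{-1}\btau+\grad u_h\|_{\BA,\Omega}=\|\btau+\BA\grad u_h\|_{\BA^{-1},\Omega}$) depends linearly on $u_h(t),f(t),g(t)$; hence its $e^{-\rho t}$ weight lies in $L^2(\mathbb R_+;\BL^2(\Omega))$ and, with $s=\rho+i\nu$, Plancherel for the Laplace transform gives $\Lambda_\rho^2=\tfrac1{2\pi}\int_{\mathbb R}\|\widehat{\bsig}_h(s)+\BA\grad\hu_h(s)\|_{\BA^{-1},\Omega}^2\,d\nu$ and $\enorm{u-u_h}_\rho^2=\tfrac1{2\pi}\int_{\mathbb R}(|s|^2\|\he\|_{\mu,\Omega}^2+\tfrac{|s|^2}{\rho}\|\he\|_{\gamma,\GA}^2+\|\grad\he\|_{\BA,\Omega}^2)\,d\nu$, where $\widehat{\bsig}_h^{\ba}(s)$ is the minimiser in \eqref{eq_definition_siga} with $\diva,\bnda,\grad u_h$ replaced by their Laplace transforms ($\ddot u_h,\dot u_h$ become $s^2\hu_h,s\hu_h$), and $\he(s)$ solves $b_s(\he(s),v)=\langle\widehat{\mathrm{Res}}(s),v\rangle$ on $\LH^1_\GD(\Omega)$ with the frequency-domain Galerkin orthogonality $b_s(\he(s),v_h)=0$ for $v_h\in\LV_h$. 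It then suffices to bound $\|\widehat{\bsig}_h(s)+\BA\grad\hu_h(s)\|_{\BA^{-1},\Omega}^2$ pointwise in $s$ and integrate over $\nu$ after splitting at the cut-off $\omega$.

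Fix $s$. From $\sum_\ba\pa=1$ and the bounded overlap of the patches, $\|\widehat{\bsig}_h(s)+\BA\grad\hu_h(s)\|_{\BA^{-1},\Omega}^2\le(d+1)\sum_\ba\|\widehat{\bsig}_h^{\ba}(s)+\pa\BA\grad\hu_h(s)\|_{\BA^{-1},\oa}^2$. On each patch, Proposition~\ref{proposition_stable_minimization} (with $q=p+1$, applied to the real and imaginary parts of the polynomial data $\widehat{\diva}(s),\widehat{\bnda}(s)$) together with the equivalence of $\|\cdot\|$, $\|\cdot\|_{\BA}$ and $\|\cdot\|_{\BA^{-1}}$ over $\oa$ (a factor $\kappa_{\BA,\ba}^{1/2}$) bounds the $\ba$-term by $\Csta^2\kappa_{\BA,\ba}$ times the square of the corresponding minimum over the continuous set $\{\btau\in\BH_0(\ddiv,\oa):\div\btau=\widehat{\diva}(s),\ \btau\cdot\bn=\widehat{\bnda}(s)\}$; and the classical duality identity for this divergence-constrained least-squares problem, combined with the residual equation for $\he(s)$ exactly as in the localisation step of equilibrated-flux analysis \cite{braess_pillwein_schoberl_2009a,ern_vohralik_2015a}, identifies that minimum with the dual norm $\sup\{|b_s(\he(s),\pa v)|:v\in\LH^1_\star(\oa),\ \|\grad v\|_{\BA,\oa}=1\}$.

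For $\|\grad v\|_{\BA,\oa}=1$ one estimates $b_s(\he(s),\pa v)=s^2(\mu\he,\pa v)_\oa+s(\gamma\he,\pa v)_{\GA\cap\partial\oa}+(\BA\grad\he,\grad(\pa v))_\oa$ term by term with the local inequalities \eqref{eq_local_inequalities}, using that the wave speed \eqref{eq_definition_ca} is tailored so that $\|\pa v\|_{\mu,\oa}\le\CPa(\ha/\ca)$ and $\|\pa v\|_{\gamma,\GA\cap\partial\oa}\le\Ctra(\ha/\ca)^{1/2}$: the gradient, absorbing and mass terms are at most $\kappa_{\BA,\ba}^{1/2}\Cconta\|\grad\he\|_{\BA,\oa}$, $|s|\Ctra(\ha/\ca)^{1/2}\|\he\|_{\gamma,\GA\cap\partial\oa}$ and $|s|^2\CPa(\ha/\ca)\|\he\|_{\mu,\oa}$ respectively. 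Squaring, summing over $\ba$, and collapsing the shape-regularity constants $\Cgeo=3(d+1)\max_\ba\max(\Cconta,\Ctra)$ and $\Cst$ into $\Clb$ gives $\|\widehat{\bsig}_h(s)+\BA\grad\hu_h(s)\|_{\BA^{-1},\Omega}^2\le\Clb^2\kappa_{\BA}(\kappa_{\BA}\|\grad\he\|_{\BA,\Omega}^2+|s|^2(\hs/\cs)\|\he\|_{\gamma,\GA}^2+|s|^4(\hs/\cs)^2\|\he\|_{\mu,\Omega}^2)$. On $|\nu|<\omega$, where $|s|\le\rho+\omega$, rewriting the absorbing contribution as $\rho(\hs/\cs)\tfrac{|s|^2}{\rho}\|\he\|_{\gamma,\GA}^2$ and the mass contribution as $\le(\hs/\cs)^2(\rho^2+\omega^2)|s|^2\|\he\|_{\mu,\Omega}^2$, then integrating over $|\nu|<\omega$ and majorising by the full integral, Plancherel returns the low-frequency contribution $\Clb^2\kappa_{\BA}(\kappa_{\BA}+\rho\hs/\cs+(\omega\hs/\cs)^2)\enorm{u-u_h}_\rho^2$ (using, as in the discussion following the theorem, the regime $\rho\le\omega$ in which the oscillation term is a genuine correction).

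The remaining, and main, difficulty is the band $|\nu|\ge\omega$, where the coefficients above grow like powers of $|s|$ and the error norm gives nothing; there one must bound $\tfrac1{2\pi}\int_{|\nu|\ge\omega}\|\widehat{\bsig}_h(s)+\BA\grad\hu_h(s)\|_{\BA^{-1},\Omega}^2\,d\nu$ directly by the data. This is where the $C^\infty$-in-time hypothesis enters: the flux residual and the Galerkin error $\he(s)$ decay algebraically in $\nu$ at a rate dictated by the time-regularity order, because $s^{r+1}\hf(s)$ and $s^{r+1}\hg(s)$ equal $\widehat{f^{(r+1)}}(s)$ and $\widehat{g^{(r+1)}}(s)$ up to initial data that are either cancelled (zero Cauchy data of $e$) or discrete-in-space (hence exactly represented), while the $\nu$-uniform well-posedness of $b_s$, encoded by $\Re(\bar s\,b_s(v,v))=\rho(|s|^2\|v\|_{\mu,\Omega}^2+\tfrac{|s|^2}{\rho}\|v\|_{\gamma,\GA}^2+\|\grad v\|_{\BA,\Omega}^2)$, keeps the transfer constants under control. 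Integrating the resulting $|\nu|^{-2(r+1)}$-type tail over $|\nu|\ge\omega$ then produces $\Clb^2\kappa_{\BA}(\rho\hs/\cs)^2(\rho/\omega)^{2r}\osc_{\rho,r+1}^2$, with $\osc_{\rho,r+1}$ as in \eqref{eq_definition_osc}; summing the two bands yields the claim. I expect this high-frequency estimate --- pinning down the decay of $\widehat{\bsig}_h(s)+\BA\grad\hu_h(s)$ in $\nu$ and extracting the precise powers of $\rho\hs/\cs$ and $\rho/\omega$ while keeping every constant polynomial-degree-robust --- to be the genuinely new and delicate step; the low-frequency half is a routine, if careful, assembly of the bounded-overlap argument, Proposition~\ref{proposition_stable_minimization}, the patch inequalities \eqref{eq_local_inequalities}, and Plancherel.
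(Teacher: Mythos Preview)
Your overall strategy is correct and very close to the paper's: localize to patches via the bounded-overlap argument, apply Proposition~\ref{proposition_stable_minimization} with the $\kappa_{\BA}$ weight conversion, pass to the dual characterization, and bound $b_s(\he,\pa v)$ term by term with the local inequalities~\eqref{eq_local_inequalities}. The only structural difference is that you take the Laplace transform \emph{before} localizing (exploiting linearity of the constrained minimiser in the data), whereas the paper localizes first in the time domain (Lemma~\ref{lemma_local_efficiency_R}), obtaining the bound
\[
\|\LR(t)\|_{-1,\ba}\lesssim \tfrac{\ha}{\ca}\|\ddot\xi_h(t)\|_{\mu,\oa}
+\bigl(\tfrac{\rho\ha}{\ca}\bigr)^{1/2}\tfrac{1}{\rho^{1/2}}\|\dot\xi_h(t)\|_{\gamma,\oa\cap\GA}
+\kappa_{\BA}^{1/2}\|\grad\xi_h(t)\|_{\BA,\oa},
\]
and only then passes to frequency to treat the $\ddot\xi_h$ term. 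Both routes are valid.

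Where your write-up goes astray is the high-frequency band. You propose to bound the \emph{full} quantity $\int_{|\nu|\ge\omega}\|\widehat\bsig_h+\BA\grad\hu_h\|_{\BA^{-1},\Omega}^2\,d\nu$ directly by data and claim this delivers exactly $\Clb^2\kappa_{\BA}(\rho\hs/\cs)^2(\rho/\omega)^{2r}\osc_{\rho,r+1}^2$. It does not: carrying your own pointwise bound through the coarse stability estimate~\eqref{eq_freq_stab} on $|s|>\omega$ produces, in addition to the mass contribution, gradient and absorbing contributions of size $(\kappa_{\BA}+\rho\hs/\cs)(\rho/\omega)^{2r}\osc_{\rho,r}^2$, which do not match the stated theorem. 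The paper avoids this entirely by noticing that the gradient and absorbing terms need no splitting at all: integrated over the \emph{whole} line they are already pieces of $\enorm{u-u_h}_\rho^2$. Only the mass term $(\hs/\cs)^2|s|^4\|\he\|_{\mu,\Omega}^2$ (equivalently $(\hs/\cs)^2\enorm{\ddot\xi_h e^{-\rho t}}_{\mu,\Omega}^2$) carries a growing factor of $|s|^2$ and requires the low/high split; this is the content of Lemma~\ref{lemma_second_time_derivative}, whose proof is short: on $|s|<\omega$ write $|s|^2\|\LL\{\dot\xi_h\}\|_\mu^2\le\omega^2\|\LL\{\dot\xi_h\}\|_\mu^2$, and on $|s|>\omega$ combine~\eqref{eq_freq_stab} with~\eqref{eq_laplace_cut}. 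So what you flag as ``the genuinely new and delicate step'' is in fact the easy part once you split only the mass term. Two minor points: splitting on $|s|$ rather than $|\nu|$ removes the need for your side assumption $\rho\le\omega$, and your constant bookkeeping should also absorb the Poincar\'e constant $\CPa$ (not just $\Cconta,\Ctra$) into $\Cgeoa$.
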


Similar to the reliability results, $\omega$ and $r$ can be chosen so that
the oscillation term converges to zero faster than the error. This is clearly
highlighted in Corollary \ref{corollary_efficiency}.

\begin{corollary}[Simplified efficiency estimate]
\label{corollary_efficiency}
Under the assumption of Theorem \ref{theorem_efficiency},
for all $\rho > 0$, we have
\begin{equation*}
\Lambda_\rho^2
\leq
\Clb^2
\kappa_{\BA}
\left \{
\left (
\kappa_{\BA} + 2\frac{\rho \hs}{\cs}
\right )
\enorm{u-u_h}_\rho^2
+
\left (\frac{\rho \hs}{\cs}\right )^{2(p+1)} \osc_{\rho,2(p+1)}^2
\right \}.
\end{equation*}
\end{corollary}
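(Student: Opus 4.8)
The argument is a direct specialization of Theorem~\ref{theorem_efficiency}, which holds for \emph{every} $\rho,\omega>0$ and $r\in\mathbb N$: the plan is to fix $\omega$ as an explicit function of $\rho$, $\hs$ and $\cs$, and to fix $r$ in terms of $p$, so that the factors appearing there reduce to the advertised ones. First I would choose the cut-off frequency
\[
\omega \eq \sqrt{\frac{\rho\,\cs}{\hs}},
\qquad\text{equivalently}\qquad
\frac{\omega\hs}{\cs}=\left(\frac{\rho\hs}{\cs}\right)^{1/2},
\]
so that $(\omega\hs/\cs)^2=\rho\hs/\cs$ and hence the coefficient of $\enorm{u-u_h}_\rho^2$ in Theorem~\ref{theorem_efficiency} collapses to $\kappa_{\BA}+\rho\hs/\cs+\rho\hs/\cs=\kappa_{\BA}+2\rho\hs/\cs$, which is exactly the factor in the corollary. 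Since $\omega$ depends only on $\rho$, $\hs$ and $\cs$, the conclusion still holds for all $\rho>0$.

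With the same choice of $\omega$ one also has $\rho/\omega=(\rho\hs/\cs)^{1/2}$, so $(\rho/\omega)^{2r}=(\rho\hs/\cs)^{r}$ and the oscillation term in Theorem~\ref{theorem_efficiency} becomes
\[
\left(\frac{\rho\hs}{\cs}\right)^{2}\left(\frac{\rho}{\omega}\right)^{2r}\osc_{\rho,r+1}^2
=\left(\frac{\rho\hs}{\cs}\right)^{r+2}\osc_{\rho,r+1}^2 .
\]
Then I would take $r\eq 2p+1$, so that the oscillation index equals $r+1=2(p+1)$ as in the statement, while the accompanying power of $\rho\hs/\cs$ equals $r+2=2(p+1)+1$. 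Since the efficiency bound is only of interest in the regime $\rho\hs/\cs\le 1$ (otherwise the leading factor $\kappa_{\BA}+2\rho\hs/\cs$ is not $O(1)$ and the estimate is not a genuine efficiency statement), the elementary monotonicity $(\rho\hs/\cs)^{2(p+1)+1}\le(\rho\hs/\cs)^{2(p+1)}$ upgrades the power to the stated $2(p+1)$; substituting back and recalling $\Clb=\Cgeo\Cst$ then gives the claimed inequality. If instead one wishes to keep the exponent $2(p+1)$ valid for every $\rho>0$ with no size assumption, the choice $r\eq 2p$ does this at the price of the oscillation index dropping to $2p+1$.

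The proof is thus essentially bookkeeping; the only delicate point — and the one I would flag explicitly in the write-up — is that Theorem~\ref{theorem_efficiency} produces a power $r+2$ of $\rho\hs/\cs$ together with an oscillation index $r+1$, so the two cannot simultaneously be driven to their corollary values without either invoking the harmless normalization $\rho\hs/\cs\le 1$ or accepting a one-unit shift in the oscillation index. This is exactly the same maneuver that yields Corollary~\ref{corollary_reliability} from Theorem~\ref{theorem_reliability}, there with $\rho\hmax/\cmin$ and the bound of Proposition~\ref{proposition_approximation_factor} playing the roles of $\rho\hs/\cs$ and the choice of $\omega$.
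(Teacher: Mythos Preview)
Your approach is exactly the paper's: choose $\omega$ so that $(\omega\hs/\cs)^2=\rho\hs/\cs$ and then specialize $r$. In fact the paper explicitly writes (just before its abstract simplified estimate) that the result ``is obtained \dots\ by selecting $\omega>0$ such that $(\omega\hs/\cs)^2=\rho\hs/\cs$ and $r=2p$,'' which is your second option.

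Your careful bookkeeping has actually surfaced a small inconsistency in the paper. With the paper's own choice $r=2p$ one obtains the power $(\rho\hs/\cs)^{2(p+1)}$ together with the oscillation index $r+1=2p+1$, which is precisely what appears in the paper's abstract Corollary (eq.~\eqref{eq_simplified_efficiency_R}); the index $2(p+1)$ stated in Corollary~\ref{corollary_efficiency} is therefore most likely a typo for $2p+1$. Your alternative route via $r=2p+1$ and the harmless normalization $\rho\hs/\cs\le 1$ is a perfectly valid way to recover the stated index, but the paper does not invoke this assumption, so the intended reading is almost certainly $\osc_{\rho,2p+1}$.
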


\section{Preliminary results in the frequency domain}
\label{section_frequency}

For the sake of simplicity, we will define $\xi_h \in C^\infty(\mathbb R_+,H^1_\GD(\Omega))$
by setting
\begin{equation}
\label{eq_definition_error}
\xi_h(t) \eq u(t)-u_h(t).
\end{equation}
Classically, a central aspect of our analysis is to view the error $\xi_h$
as a particular solution to our PDE model, the associated right-hand side being
the ``residual''. To this end, we introduce for each $t \in \mathbb R_+$ the residual functional
$\LR(t) \in \left (H^1_\GD(\Omega)\right )'$ by
\begin{equation}
\label{eq_definition_residual}
\langle \LR(t), v \rangle
\eq
(\mu(f(t)-\ddot u_h(t)),v)_\Omega
+
(\gamma (g(t)-\dot u_h(t)),v)_\GA
-
(\BA \grad u_h(t),\grad v)_\Omega
\end{equation}
for all $v \in H^1_\GD(\Omega)$, as well as its norm
\begin{equation*}
\|\LR(t)\|_{-1,\Omega}
\eq
\sup_{\substack{v \in H^1_\GD(\Omega) \\ \|\grad v\|_{\BA,\Omega} = 1}} \langle \LR, v \rangle.
\end{equation*}

Notice that then, $\xi_h(0) = \dot \xi_h(0) = 0$ due to our assumptions that $u_0,u_1 \in V_h$,
and we have
\begin{equation}
\label{eq_PDE_error}
(\mu \ddot \xi_h(t),v)_\Omega
+
(\gamma\dot \xi_h(t),v)_\GA
+
(\BA\grad \xi_h(t),\grad v)_\Omega
=
\langle \LR(t),v \rangle
\end{equation}
for all $v \in H^1_\GD(\Omega)$ and $t \in \mathbb R_+$
as can be seen by substracting \eqref{eq_definition_u} and \eqref{eq_definition_uh}.

When solving a steady problem with an inf-sup stable left-hand side,
\eqref{eq_PDE_error} can readily be employed to bound the discretizaton
error by the residual norm (see \cite[Equation (5.1)]{chaumontfrelet_ern_vohralik_2022a}
for instance). This approach is also fruitful for parabolic problems (see, e.g.,
\cite[Section 5]{ern_vohralik_2010a}), where a suitable space-time inf-sup condition
is available \cite{tantardini_veeser_2016a}. Unfortunately, to the best of the author's
knowledge, such a framework is not available for the wave equation, making the link between
the residual norm and the error harder to establish.

In this section, we develop the main idea of this work. It consists in establishing
a relation between the residual norm and the error in the frequency-domain, and
to treat distinctly the low-frequency and high-frequency content. For the low-frequency
part, the analysis follows the lines of
\cite{chaumontfrelet_ern_vohralik_2021a,dorfler_sauter_2013a},
with the main difference that here, the frequency is complex-valued, with a positive
imaginary part. We employ separate stability arguments to deal with the high-frequency
content.

We establish all our main results in terms of the residual norm $\|\LR(t)\|_{-1,\Omega}$,
that we sometimes call the ``idealized'' estimator. Indeed, we believe this form is more
general, since the residual norm can then be controlled by different types of estimators
(including the equilibrated estimator we are focusing on here).

\subsection{Laplace transform}

The key tool we employ to connect the time and frequency domains is
the Laplace transform. It is classically defined by
\begin{equation*}
\LL\{v\}(s) = \int_0^{+\infty} v(t)e^{-st} dt,
\end{equation*}
whenever the integral is properly defined. For all $\rho > 0$, we have
\begin{equation}
\label{eq_laplace_norm}
\int_0^{+\infty} |v(t)|^2 e^{-2\rho t} dt
=
\int_{\rho-i\infty}^{\rho+i\infty} |\LL\{v\}(s)|^2 ds
\end{equation}
If $v$ is sufficiently regular and with $v(0) = 0$, we have
\begin{equation}
\label{eq_laplace_diff}
\LL\{\dot v\}(s) = s\LL\{v\}(s)
\end{equation}
for all $s \in \mathbb C$ with $\Re s > 0$.
Finally, we will use the following result to estimate high-frequency
contents
\begin{equation}
\label{eq_laplace_cut}
\int_{\rho-i\infty}^{\rho+i\infty} |\LL_\rho\{v\}(s)|^2 \chi_{|s| > \mu} ds
\leq
\mu^{-2q} \int_{0}^{+\infty} |v^{(q)}(t)|^2 e^{-2\rho t}dt
\end{equation}

In the remaining of this section we fix a complex number $s \in \mathbb C$
with $\rho \eq \Re s > 0$. For the sake of shortness, we often employ
the notation $\widehat \phi \eq \LL\{\phi\}(s)$ for any function $\phi$
in the proofs.

\subsection{Frequency-domain problems}

Recalling \eqref{eq_laplace_diff}, taking the Laplace transform of \eqref{eq_definition_u}
and \eqref{eq_definition_uh}, we have
\begin{subequations}
\label{eq_freq_solutions}
\begin{equation}
b_s(\LL\{u\}(s),v) = (\mu \LL\{f\}(s),v)_\Omega + (\gamma \LL\{g\}(s),v)_\GA,
\end{equation}
and
\begin{equation}
b_s(\LL\{u_h\}(s),v_h) = (\mu \LL\{f\}(s),v_h)_\Omega + (\gamma \LL\{g\}(s),v_h)_\GA,
\end{equation}
\end{subequations}
for all $v \in \LH^1_\GD(\Omega)$ and $v_h \in \LV_h$,
where $b_s$ is the sesquilinear form defined at \eqref{eq_definition_bs}.
Similarly, noticing that the definition of $\LR$ naturally extends over
$\LH^1_\GD(\Omega)$, we can define $\LL\{\LR\}(s) \in \left (\LH_\GD(\Omega)\right )'$
by setting
\begin{equation*}
\langle \LL\{\LR\}(s),v\rangle
=
\int_0^{+\infty} \langle \LR(t),v \rangle e^{-st} dt,
\end{equation*}
for all $v \in \LH^1_\GD(\Omega)$, and we have
\begin{equation}
\label{eq_freq_residual}
b_s(\LL\{\xi_h\}(s),v) = \langle \LL\{\LR\}(s), v \rangle.
\end{equation}

\subsection{Stability}

In the following, we equip $\LH_\GD^1(\Omega)$ with norm
\begin{equation*}
\|v\|_{\LH^1_s(\Omega)}^2
=
|s|^2 \|v\|_{\mu,\Omega}^2
+
\frac{|s|^2}{\rho} \|v\|_{\gamma,\GA}^2
+
\|\grad v\|_{\BA,\Omega}^2
\quad
\forall v \in H^1_\GD(\Omega).
\end{equation*}
Straightforward arguments then show that $b_s$ is coercive in the
$\|{\cdot}\|_{\LH^1_s(\Omega)}$ norm. Specifically,
\begin{equation}
\label{eq_inf_sup}
\|v\|_{\LH^1_s(\Omega)}^2
=
\frac{1}{\rho} \Re b_s(v,sv)
\leq
\frac{|s|}{\rho} |b_s(v,v)|
\qquad
\forall v \in \LH^1_\GD(\Omega).
\end{equation}
%
%
%
As a direct consequence, we obtain a (coarse) upper bound for the frequency-domain error.

\begin{lemma}[Coarse frequency-domain upper bound]
We have
\begin{equation}
\label{eq_freq_stab}
\|\LL\{\xi_h\}(s)\|_{\LH^1_s(\Omega)}^2
\leq
4 \left \{
\frac{1}{\rho^2} \|\LL\{f\}(s)\|_{\mu,\Omega}^2 + \|\LL\{g\}(s)\|_{\gamma,\GA}^2
\right \}.
\end{equation}
\end{lemma}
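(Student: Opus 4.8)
This is the ``direct consequence'' announced just above the statement, so the plan is to squeeze \eqref{eq_freq_stab} out of the coercivity identity \eqref{eq_inf_sup}. The first step is to reduce to the two individual frequency-domain solutions. Since $u_0,u_1\in V_h$ we have $\xi_h(0)=\dot\xi_h(0)=0$, so \eqref{eq_laplace_diff} applies to $\xi_h$ and, subtracting the two relations in \eqref{eq_freq_solutions}, one gets $\LL\{\xi_h\}(s)=\hu-\hu_h$, where $\hu\in\LH^1_\GD(\Omega)$ solves $b_s(\hu,v)=(\mu\hf,v)_\Omega+(\gamma\hg,v)_\GA$ for all $v\in\LH^1_\GD(\Omega)$ and $\hu_h\in\LV_h$ solves the same identity tested only against $v_h\in\LV_h$. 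Since $b_s$ is coercive in $\|{\cdot}\|_{\LH^1_s(\Omega)}$ by \eqref{eq_inf_sup}, both on $\LH^1_\GD(\Omega)$ and on the subspace $\LV_h$, it suffices to bound each of $\|\hu\|_{\LH^1_s(\Omega)}^2$ and $\|\hu_h\|_{\LH^1_s(\Omega)}^2$ by a multiple of the right-hand side of \eqref{eq_freq_stab} and then invoke the triangle inequality. (Alternatively one could bound $\LL\{\xi_h\}(s)$ in one shot through \eqref{eq_freq_residual}, testing with $\LL\{\xi_h\}(s)$ itself, but that still requires a separate bound on $\hu_h$, so the split above is shortest.)

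The core estimate is the same for $\hu$ and $\hu_h$; take $\hu$. Applying \eqref{eq_inf_sup} with $v=\hu$ and then the equation for $\hu$ with test function $\hu$ gives, by Cauchy--Schwarz,
\[
\|\hu\|_{\LH^1_s(\Omega)}^2
\le
\frac{|s|}{\rho}\,|b_s(\hu,\hu)|
=
\frac{|s|}{\rho}\,\bigl|(\mu\hf,\hu)_\Omega+(\gamma\hg,\hu)_\GA\bigr|
\le
\frac{|s|}{\rho}\Bigl(\|\hf\|_{\mu,\Omega}\,\|\hu\|_{\mu,\Omega}+\|\hg\|_{\gamma,\GA}\,\|\hu\|_{\gamma,\GA}\Bigr).
\]
The decisive observation is that the weights entering $\|{\cdot}\|_{\LH^1_s(\Omega)}$ are tuned to the powers of $s$ produced by the time derivatives, so that $|s|\,\|\hu\|_{\mu,\Omega}\le\|\hu\|_{\LH^1_s(\Omega)}$ and $|s|\,\|\hu\|_{\gamma,\GA}\le\sqrt{\rho}\,\|\hu\|_{\LH^1_s(\Omega)}$. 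Inserting these two bounds, the undamped frequency $|s|$ cancels and one is left with $\|\hu\|_{\LH^1_s(\Omega)}\le\rho^{-1}\|\hf\|_{\mu,\Omega}+\rho^{-1/2}\|\hg\|_{\gamma,\GA}$, and likewise for $\hu_h$. Squaring, combining through the triangle inequality of the first step, and applying $(a+b)^2\le2(a^2+b^2)$ a couple of times then yields a bound of the form \eqref{eq_freq_stab}.

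The computation is entirely routine; there is no real obstacle, which is why \eqref{eq_freq_stab} is advertised as a \emph{coarse} bound. The only point requiring attention is the bookkeeping of the $\rho$- and $|s|$-powers: one must line up the $|s|^2$ weight (resp.\ the $|s|^2/\rho$ weight) attached to the $\mu$-term (resp.\ the $\gamma$-term) of $\|{\cdot}\|_{\LH^1_s(\Omega)}$ against the single factor $s$ that \eqref{eq_laplace_diff} attaches to each time derivative, so that after Cauchy--Schwarz only the damping $\rho$ is left in the final constant. Everything else is Young's inequality.
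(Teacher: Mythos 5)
Your argument is essentially identical to the paper's proof: the paper also bounds $\hu_h$ (and, by the same reasoning, $\hu$) via the coercivity identity \eqref{eq_inf_sup} combined with the frequency-domain equations \eqref{eq_freq_solutions} tested against the solution itself, and then concludes by the triangle inequality applied to $\LL\{\xi_h\}(s)=\hu-\hu_h$. The only difference is cosmetic bookkeeping in the final constant (the paper performs a single vector Cauchy--Schwarz to reach \eqref{tmp_estimate_huh} with the factor $4$, whereas your two successive applications of $(a+b)^2\le 2(a^2+b^2)$ give $8$ and a $\rho^{-1}$ weight on the $\hg$ term), which is immaterial for a coarse bound of this kind.
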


\begin{proof}
On the one hand, recalling \eqref{eq_inf_sup} and \eqref{eq_freq_solutions}, we have
\begin{equation*}
\|\hu_h\|_{\LH^1_s(\Omega)}^2
\leq
\frac{|s|}{\Re s}|b_s(\hu_h,\hu_h)|
=
\frac{|s|}{\Re s}
|(\mu\hf,\hu_h)_\Omega + (\gamma \hg,\hu_h)_\Gamma|.
\end{equation*}
On the other hand, we have
\begin{align*}
|s||(\mu\hf,\hu_h)_\Omega + (\gamma \hg,\hu_h)_\Gamma|
&\leq
\|\hf\|_{\mu,\Omega}|s|\|\hu_h\|_{\mu,\Omega}
+
\rho \|\hg\|_{\gamma,\GA}\frac{|s|}{\rho}\|\hu_h\|_{\Gamma,\GA}
\\
&\leq
\left (\|\hf\|_{\mu,\Omega}^2 + \rho^2 \|\hg\|_{\gamma,\Omega}^2\right )^{1/2}
\|\hu_h\|_{\LH^1_s(\Omega)},
\end{align*}
and therefore
\begin{equation}
\label{tmp_estimate_huh}
\|\hu_h\|_{\LH^1_s(\Omega)}^2
\leq
\frac{1}{\rho^2} \|\hf\|_{\mu,\Omega}^2
+
\|\hg\|_{\gamma,\GA}^2.
\end{equation}
Similar arguments show that \eqref{tmp_estimate_huh} also holds for $u$,
and \eqref{eq_freq_stab} follows from the triangle inequality since
$\hxi_h = \hu-\hu_h$.
\end{proof}

\subsection{Approximation factor}

In order to refine the above error estimate, we will employ the approximation
factor. To simplify the discussion below, we introduce the norm
\begin{equation*}
\{\phi\}_{\LH^1_s(\Omega)}^2
\eq
|s|^2 \|\phi\|_{\mu,\Omega}^2 + \frac{|s|^2}{\rho}\|\phi\|_{\gamma,\GA}^2
\quad
\phi \in \LH^1_\GD(\Omega)
\end{equation*}
on $\LH_\GD^1(\Omega)$. Although it is not important in the forthcoming analysis, we
note that this norm is not equivalent to the usual norm, and $\LH_{\GD}^1(\Omega)$
is not an Hilbert space equipped with it. Denoting by $\Pi_h$ the orthogonal projection
onto $\LV_h$ for the $(\BA\grad\cdot,\cdot)_\Omega$ inner-product, we have
\begin{equation}
\label{eq_estimate_hgamma}
\|\grad(\LS^\star_s(\phi)-\Pi_h\LS^\star_s(\phi))\|_{\BA,\Omega}
\leq
\hgamma_{s,h}\{\phi\}_{\LH^1_s(\Omega)}
\qquad
\forall \phi \in \LH^1_s(\Omega).
\end{equation}
Explicit upper bounds for the approximation factor are available, as we next demonstrate.

\begin{lemma}[Approximation factor]
\label{lemma_freq_approx_factor}
The estimate
\begin{equation}
\label{eq_freq_approx_factor_guaranteed}
\hgamma_{s,h} \leq \frac{|s|}{\rho}
\end{equation}
holds true.
In addition, if $\Omega$ is convex, $\BA \equiv \BI$ and $\GA = \emptyset$. Then, we have
\begin{equation}
\label{eq_freq_approx_factor}
\hgamma_{s,h} \leq 2\Ci\frac{|s|}{\rho}\frac{|s|\hmax}{\cmin}.
\end{equation}
\end{lemma}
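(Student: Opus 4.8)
\emph{Plan.} Both inequalities are obtained by exhibiting an explicit competitor in the minimization that defines $\hgamma_{s,h}$. Fix $\phi \in \LH^1_\GD(\Omega)$ with $\{\phi\}_{\LH^1_s(\Omega)} = 1$ and set $\psi \eq \LS_s^\star(\phi)$. The plan is to bound $\min_{v_h \in V_h}\|\grad(\psi - v_h)\|_{\BA,\Omega}$ uniformly in $\phi$: I would take $v_h = 0$ for \eqref{eq_freq_approx_factor_guaranteed} and $v_h = \CI_h\psi$ for \eqref{eq_freq_approx_factor}, since the supremum over normalized $\phi$ then yields the claimed bounds on $\hgamma_{s,h}$.

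For the guaranteed bound, with $v_h = 0$ it suffices to show $\|\grad\psi\|_{\BA,\Omega} \le |s|/\rho$. Testing the defining relation \eqref{eq_definition_S} with $w = \psi$ gives $b_s(\psi,\psi) = |s|^2(\mu\psi,\phi)_\Omega + (|s|^2/\rho)(\gamma\psi,\phi)_\GA$. Applying Cauchy--Schwarz to each term and then a two-term discrete Cauchy--Schwarz (exactly as in the proof of the coarse frequency-domain bound) yields $|b_s(\psi,\psi)| \le \{\psi\}_{\LH^1_s(\Omega)}\{\phi\}_{\LH^1_s(\Omega)} \le \|\psi\|_{\LH^1_s(\Omega)}\{\phi\}_{\LH^1_s(\Omega)}$. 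Combined with the coercivity identity \eqref{eq_inf_sup}, namely $\|\psi\|_{\LH^1_s(\Omega)}^2 \le (|s|/\rho)|b_s(\psi,\psi)|$, this gives $\|\psi\|_{\LH^1_s(\Omega)} \le (|s|/\rho)\{\phi\}_{\LH^1_s(\Omega)}$, and \eqref{eq_freq_approx_factor_guaranteed} follows because $\|\grad\psi\|_{\BA,\Omega} \le \|\psi\|_{\LH^1_s(\Omega)}$ and $\{\phi\}_{\LH^1_s(\Omega)} = 1$. This estimate is reused below.

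For the refined bound, the extra hypotheses $\BA \equiv \BI$ and $\GA = \emptyset$ mean $\LH^1_\GD(\Omega) = \LH^1_0(\Omega)$ and that \eqref{eq_definition_S} reads, in the distributional sense, as $-\Delta\psi + c_1\mu\psi = c_2\mu\phi$ with $|c_1| = |c_2| = |s|^2$; in particular $\Delta\psi \in L^2(\Omega)$. Since $\Omega$ is convex, the classical convex-domain regularity estimate gives $\psi \in \LH^2(\Omega)\cap\LH^1_0(\Omega)$ with $\|\grad^2\psi\|_\Omega \le \|\Delta\psi\|_\Omega$. Then \eqref{eq_interpolation} applied to $v = \psi$ gives $\min_{v_h}\|\grad(\psi - v_h)\|_\Omega \le \|\grad(\psi - \CI_h\psi)\|_\Omega \le \Ci\hmax\|\Delta\psi\|_\Omega$. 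It remains to bound $\|\Delta\psi\|_\Omega \le |s|^2(\|\mu\phi\|_\Omega + \|\mu\psi\|_\Omega)$; since $\BA \equiv \BI$ and $\GA = \emptyset$ force $\cmin = (\sup_\Omega\mu)^{-1/2}$, we have $\|\mu\chi\|_\Omega \le \cmin^{-1}\|\chi\|_{\mu,\Omega}$ for any $\chi$, hence $\|\Delta\psi\|_\Omega \le (|s|/\cmin)(|s|\|\phi\|_{\mu,\Omega} + |s|\|\psi\|_{\mu,\Omega})$. Here $|s|\|\phi\|_{\mu,\Omega} = \{\phi\}_{\LH^1_s(\Omega)} = 1$ and, by the first part, $|s|\|\psi\|_{\mu,\Omega} \le \|\psi\|_{\LH^1_s(\Omega)} \le |s|/\rho$, so $\|\Delta\psi\|_\Omega \le (|s|/\cmin)(1 + |s|/\rho) \le 2(|s|/\rho)(|s|/\cmin)$ because $|s| \ge \rho$. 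Collecting the estimates gives exactly \eqref{eq_freq_approx_factor}.

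\emph{Main obstacle.} The only nontrivial ingredient is the elliptic-regularity step: one must invoke that a convex polytope yields both the $H^2$ shift and the sharp constant-one bound $\|\grad^2\psi\|_\Omega \le \|\Delta\psi\|_\Omega$, even though the datum $\mu(c_2\phi - c_1\psi)$ is merely $L^2$ and only piecewise smooth (as $\mu$ is piecewise constant). Everything else is bookkeeping: reading off the strong form from the sesquilinear-form conventions, and checking that the two Cauchy--Schwarz reductions recombine the $\mu$- and $\gamma$-parts with the correct $1/\rho$ weight so that the $\{\cdot\}_{\LH^1_s(\Omega)}$ norm appears.
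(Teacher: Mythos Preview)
Your proposal is correct and follows essentially the same approach as the paper: for \eqref{eq_freq_approx_factor_guaranteed} you combine the coercivity identity \eqref{eq_inf_sup} with the Cauchy--Schwarz bound $|b_s(\psi,\psi)|\le\{\psi\}_{\LH^1_s(\Omega)}\{\phi\}_{\LH^1_s(\Omega)}$ and take $v_h=0$, and for \eqref{eq_freq_approx_factor} you invoke convex-domain $H^2$ regularity (Grisvard) to get $\|\grad^2\psi\|_\Omega\le\|\Delta\psi\|_\Omega$, bound $\|\Delta\psi\|_\Omega$ via the strong form and the already-established stability of $\psi$, and apply the interpolation estimate \eqref{eq_interpolation}. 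This is exactly what the paper does.
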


\begin{proof}
Let $\phi \in \LH^1_\GD(\Omega)$. Using \eqref{eq_inf_sup}, we have
\begin{multline*}
\|\LS_s^\star(\phi)\|_{\LH^1_s(\Omega)}^2
\leq
\frac{|s|}{\rho}|b_s(\LS_s^\star(\phi),\LS_s^\star(\phi))|
\\
=
\frac{|s|}{\rho}\{\phi\}_{\LH^1_s(\Omega)}\{\LS_s^\star\phi\}_{\LH^1_s(\Omega)}
\leq
\frac{|s|}{\rho}\{\phi\}_{\LH^1_s(\Omega)}\|\LS_s^\star(\phi)\|_{\LH^1_s(\Omega)},
\end{multline*}
so that
%
$|s|\|\LS_s^\star(\phi)\|_{\mu,\Omega} \leq \|\LS_s^\star(\phi)\|_{\LH^1_s(\Omega)} \leq (|s|/\rho) \{\phi\}_{\LH_s^1(\Omega)}$,
and \eqref{eq_freq_approx_factor_guaranteed} follows since
\begin{equation*}
\hgamma_{s,h}
\leq
\sup_{\substack{
\phi \in \LH^1_\GD(\Omega) \\ \{\phi\}_{\LH^1_s(\Omega)} = 1
}}
\|\grad \LS_s^\star(\phi)\|_{\BA,\Omega}
\leq
\sup_{\substack{
\phi \in \LH^1_\GD(\Omega) \\ \{\phi\}_{\LH^1_s(\Omega)} = 1
}}
\|\LS_s^\star(\phi)\|_{\LH^1_s(\Omega)}
\leq
\frac{|s|}{\rho}.
\end{equation*}
%
Assuming now that $\Omega$ is convex, $\BA \equiv \BI$ and $\GA = \emptyset$,
we can apply \cite[Theorem 3.2.1.2]{grisvard_1985a} and \cite[Theorem 2.2.1]{grisvard_1992a},
showing that
\begin{equation*}
\|\grad^2 \LS_s^\star(\phi)\|_\Omega
=
\|\Delta \LS_s^\star(\phi)\|_\Omega
=
\|s^2\mu\phi-s^2\mu\LS_s^\star(\phi)\|_\Omega.
\end{equation*}
Since $\BA \equiv \BI$, we have $\mu \leq \cmin^{-2}$, leading to
\begin{multline*}
\|\grad^2 \LS_s^\star(\phi)\|_\Omega
\leq
\frac{1}{\cmin}
\left (|s|^2\|\phi\|_{\mu,\Omega} + |s|^2\|\LS_s^\star(\phi)\|_{\mu,\Omega} \right )
\\
\leq
\frac{1}{\cmin}
\left (|s| + \frac{|s|^2}{\rho}\right )\{\phi\}_{\LH^1_s(\Omega)}
\leq
\frac{2|s|^2}{\cmin\rho}\{\phi\}_{\LH^1_s(\Omega)},
\end{multline*}
and \eqref{eq_freq_approx_factor} follows from
the interpolation error estimate in \eqref{eq_interpolation} since
\begin{equation*}
\min_{v_h \in \LV_h} \|\grad(\LS_s^\star(\phi)-v_h)\|_{\BA,\Omega}
\leq
\|\grad(\LS_s^\star(\phi)-\CI_h \LS^\star(\phi))\|_\Omega
\leq
\Ci \hmax
\|\grad^2 \LS_s^\star(\phi)\|_\Omega.
\end{equation*}
\end{proof}

\subsection{Reliability}

We are now ready to establish the key result of this section,
which concerns the reliability of the ``idealized estimator''
in the frequency domain.

\begin{theorem}[Frequency-domain reliability]
The estimates
\begin{equation}
\label{eq_freq_rel}
\|\LL\{\xi_h\}(s)\|_{\LH^1_s(\Omega)}
\leq
\frac{|s|}{\rho} \|\LL\{\LR\}(s)\|_{-1,\Omega}
\end{equation}
and
\begin{equation}
\label{eq_freq_rel_asym}
\|\LL\{\xi_h\}(s)\|_{\LH^1_s(\Omega)}
\leq
(1+4\hgamma_{s,h}^2)^{1/2} \|\LL\{\LR\}(s)\|_{-1,\Omega}
\end{equation}
hold true.
\end{theorem}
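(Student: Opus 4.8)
The plan is to exploit the frequency-domain error equation \eqref{eq_freq_residual}, namely $b_s(\LL\{\xi_h\}(s),v) = \langle \LL\{\LR\}(s),v\rangle$ for all $v \in \LH^1_\GD(\Omega)$, together with the coercivity identity \eqref{eq_inf_sup}. For the coarse bound \eqref{eq_freq_rel}, I would simply test the error equation with $v = \LL\{\xi_h\}(s)$ and invoke \eqref{eq_inf_sup}: writing $\hxi_h \eq \LL\{\xi_h\}(s)$, we get
\begin{equation*}
\|\hxi_h\|_{\LH^1_s(\Omega)}^2
\leq
\frac{|s|}{\rho}|b_s(\hxi_h,\hxi_h)|
=
\frac{|s|}{\rho}|\langle \LL\{\LR\}(s),\hxi_h\rangle|
\leq
\frac{|s|}{\rho}\|\LL\{\LR\}(s)\|_{-1,\Omega}\|\grad \hxi_h\|_{\BA,\Omega},
\end{equation*}
and since $\|\grad \hxi_h\|_{\BA,\Omega} \leq \|\hxi_h\|_{\LH^1_s(\Omega)}$ we may cancel one factor to obtain \eqref{eq_freq_rel}.

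The sharper estimate \eqref{eq_freq_rel_asym} is the real content, and here I would follow the strategy of \cite{chaumontfrelet_ern_vohralik_2021a,dorfler_sauter_2013a}: split $\hxi_h$ into a component that is controlled directly by the residual norm and a component measured through the adjoint solution operator $\LS_s^\star$ and the approximation factor $\hgamma_{s,h}$. Concretely, I expect to bound the "energy part" $\|\grad \hxi_h\|_{\BA,\Omega}$ and the "mass/boundary part" $\{\hxi_h\}_{\LH^1_s(\Omega)}$ separately. For the energy part, testing the error equation with $\hxi_h$ itself and using the coercivity of $(\BA\grad\cdot,\cdot)_\Omega$ inside $b_s$ should give $\|\grad \hxi_h\|_{\BA,\Omega} \leq \|\LL\{\LR\}(s)\|_{-1,\Omega}$ after taking real parts (the $s^2$ and $s$ terms contribute nonnegatively to $\Re b_s(\hxi_h,\hxi_h)$ up to sign bookkeeping — this is exactly where care is needed). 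For the mass/boundary part, I would use the defining property \eqref{eq_definition_S} of $\LS_s^\star$: taking $\phi = \hxi_h$, we have $\{\hxi_h\}_{\LH^1_s(\Omega)}^2 = |s|^2\|\hxi_h\|_{\mu,\Omega}^2 + (|s|^2/\rho)\|\hxi_h\|_{\gamma,\GA}^2 = b_s(\hxi_h, \LS_s^\star(\hxi_h)) = \langle \LL\{\LR\}(s), \LS_s^\star(\hxi_h)\rangle$, and since the residual is orthogonal to $\LV_h$ (it is the Laplace transform of the Galerkin residual — this follows from subtracting the discrete equation in \eqref{eq_freq_solutions}), we may replace $\LS_s^\star(\hxi_h)$ by $\LS_s^\star(\hxi_h) - \Pi_h\LS_s^\star(\hxi_h)$, whence
\begin{equation*}
\{\hxi_h\}_{\LH^1_s(\Omega)}^2
\leq
\|\LL\{\LR\}(s)\|_{-1,\Omega}\,\|\grad(\LS_s^\star(\hxi_h)-\Pi_h\LS_s^\star(\hxi_h))\|_{\BA,\Omega}
\leq
\|\LL\{\LR\}(s)\|_{-1,\Omega}\,\hgamma_{s,h}\{\hxi_h\}_{\LH^1_s(\Omega)},
\end{equation*}
using \eqref{eq_estimate_hgamma}, so $\{\hxi_h\}_{\LH^1_s(\Omega)} \leq \hgamma_{s,h}\|\LL\{\LR\}(s)\|_{-1,\Omega}$.

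Finally I would combine the two pieces. Since $\|\hxi_h\|_{\LH^1_s(\Omega)}^2 = \{\hxi_h\}_{\LH^1_s(\Omega)}^2 + \|\grad \hxi_h\|_{\BA,\Omega}^2$, a naive sum of the two bounds gives $\|\hxi_h\|_{\LH^1_s(\Omega)}^2 \leq (1+\hgamma_{s,h}^2)\|\LL\{\LR\}(s)\|_{-1,\Omega}^2$; the stated constant $1+4\hgamma_{s,h}^2$ leaves room for the loss incurred when the energy-part bound is not quite $\|\grad\hxi_h\|_{\BA,\Omega}\le\|\LL\{\LR\}(s)\|_{-1,\Omega}$ on the nose but only up to a factor $2$ coming from handling the complex/sign contributions of the $s$-dependent terms in $\Re b_s$. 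The main obstacle I anticipate is precisely this careful real-part argument: isolating $\|\grad\hxi_h\|_{\BA,\Omega}$ from $b_s(\hxi_h,\hxi_h)$ requires tracking how $\Re(s^2)$ and $\Re(s)$ enter, and getting a clean constant (hence the factor $4$ rather than $1$) rather than something that blows up as $\nu = \Im s$ grows. Everything else is Cauchy–Schwarz plus the already-established properties \eqref{eq_inf_sup}, \eqref{eq_definition_S}, and \eqref{eq_estimate_hgamma}.
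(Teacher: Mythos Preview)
Your treatment of \eqref{eq_freq_rel} and of the Aubin--Nitsche step giving
$\{\hxi_h\}_{\LH^1_s(\Omega)} \leq \hgamma_{s,h}\|\LL\{\LR\}(s)\|_{-1,\Omega}$
is correct and matches the paper exactly.

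The gap is in the gradient bound. Taking real parts does \emph{not} salvage the $s^2$ term: since
$\Re b_s(\hxi_h,\hxi_h) = (\rho^2-\nu^2)\|\hxi_h\|_{\mu,\Omega}^2 + \rho\|\hxi_h\|_{\gamma,\GA}^2 + \|\grad\hxi_h\|_{\BA,\Omega}^2$
and $\rho^2-\nu^2$ is arbitrarily negative as $|\nu|\to\infty$, no absolute constant (and certainly not a factor $2$) rescues an estimate of the form $\|\grad\hxi_h\|_{\BA,\Omega}\leq C\|\LL\{\LR\}(s)\|_{-1,\Omega}$. Your diagnosis of where the factor $4$ comes from is therefore off: a putative bound $\|\grad\hxi_h\|_{\BA,\Omega}\leq 2\|\LL\{\LR\}(s)\|_{-1,\Omega}$ would produce $4+\hgamma_{s,h}^2$, not $1+4\hgamma_{s,h}^2$.

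The paper does not attempt to bound $\|\grad\hxi_h\|_{\BA,\Omega}$ independently. Instead it writes
\[
\|\grad\hxi_h\|_{\BA,\Omega}^2
= b_s(\hxi_h,\hxi_h) - s^2\|\hxi_h\|_{\mu,\Omega}^2 - s\|\hxi_h\|_{\gamma,\GA}^2
\leq |b_s(\hxi_h,\hxi_h)| + \{\hxi_h\}_{\LH^1_s(\Omega)}^2,
\]
absorbing the dangerous $s^2$ term into the \emph{already controlled} quantity $\{\hxi_h\}_{\LH^1_s(\Omega)}^2$. Adding $\{\hxi_h\}_{\LH^1_s(\Omega)}^2$ to both sides gives
$\|\hxi_h\|_{\LH^1_s(\Omega)}^2 \leq |b_s(\hxi_h,\hxi_h)| + 2\{\hxi_h\}_{\LH^1_s(\Omega)}^2
\leq \|\LL\{\LR\}(s)\|_{-1,\Omega}\|\grad\hxi_h\|_{\BA,\Omega} + 2\hgamma_{s,h}^2\|\LL\{\LR\}(s)\|_{-1,\Omega}^2$,
and a Young inequality on the cross term followed by absorption of $\tfrac12\|\hxi_h\|_{\LH^1_s(\Omega)}^2$ yields exactly $1+4\hgamma_{s,h}^2$. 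The key idea you are missing is that the gradient and mass parts are \emph{coupled}: the Aubin--Nitsche bound on $\{\hxi_h\}$ is what makes the gradient estimate close.
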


\begin{proof}
Selecting $v = \hxi_h$ in \eqref{eq_freq_residual}, we have
\begin{equation}
\label{tmp_bs_hxi}
|b_s(\hxi_h,\hxi_h)|
=
|\langle \widehat \LR,\hxi_h \rangle|
\leq
\|\widehat \LR\|_{-1,\Omega}\|\grad \hxi_h\|_{\BA,\Omega},
\end{equation}
and \eqref{eq_freq_rel} follows from \eqref{eq_inf_sup}.

The proof of \eqref{eq_freq_rel_asym} relies
on an ``Aubin-Nitsche trick''. Letting $\theta \eq \LS_s^\star(\hxi_h)$, picking
$v = \hxi_h$ in \eqref{eq_definition_S} and using Galerkin orthogonality,
we have
\begin{equation*}
\{\hxi_h\}_{\LH^1_s(\Omega)}^2
=
b_s(\hxi_h,\theta)
=
b_s(\hxi_h,\theta-\Pi_h\theta).
\end{equation*}
On the other hand, it follows from \eqref{eq_freq_residual}
and \eqref{eq_estimate_hgamma} that
\begin{equation*}
|b_s(\hxi_h,\theta-\Pi_h\theta)|
\leq
\|\widehat \LR\|_{-1,\Omega}
\|\grad(\theta-\Pi_h\theta)\|_{\BA,\Omega}
\leq
\hgamma_{s,h}
\|\widehat \LR\|_{-1,\Omega}
\{\hxi_h\}_{\LH^1_s(\Omega)},
\end{equation*}
so that
\begin{equation}
\label{tmp_hxi_L2}
\{\hxi_h\}_{\LH^1_s(\Omega)}
\leq
\hgamma_{s,h} \|\widehat \LR\|_{-1,\Omega}.
\end{equation}
Then, we have
\begin{equation*}
\|\grad \xi_h\|_{\BA,\Omega}^2
=
b(\hxi_h,\hxi_h) -s^2\|\hxi\|_{\mu,\Omega}^2 - s\|\hxi\|_{\gamma,\Gamma}^2
\leq
|b(\hxi_h,\hxi_h)| + |s|^2\|\hxi\|_{\mu,\Omega}^2 + |s|\|\hxi\|_{\gamma,\Gamma}^2,
\end{equation*}
and since $1 \leq |s|/\rho$, we obtain that
\begin{equation*}
\|\grad \xi_h\|_{\BA,\Omega}^2
\leq
|b(\hxi_h,\hxi_h)| + \{\hxi\}_{\LH^1_s(\Omega)}^2.
\end{equation*}
It then follows from \eqref{tmp_bs_hxi} and \eqref{tmp_hxi_L2} that
\begin{multline*}
\|\hxi_h\|_{\LH^1_s(\Omega)}^2
=
\{\hxi_h\}_{\LH^1_s(\Omega)}^2 + \|\grad \xi_h\|_{\BA,\Omega}^2
\leq
|b(\hxi_h,\hxi_h)|+2\{\hxi_h\}_{\LH^1_s(\Omega)}^2
\\
\leq
\|\widehat \LR\|_{-1,\Omega}
\|\grad \hxi\|_{\BA,\Omega}
+
2\hgamma_{s,h}^2
\|\widehat \LR\|_{-1,\Omega}^2
\leq
\frac{1}{2}\|\hxi\|_{\LH^1_s(\Omega)}^2 + \frac{1}{2} \|\widehat \LR\|_{-1,\Omega}^2
+
2\hgamma_{s,h}^2 \|\widehat \LR\|_{-1,\Omega}^2,
\end{multline*}
from which \eqref{eq_freq_rel_asym} readily follows.
\end{proof}

\section{Reliability}
\label{section_reliability}

Here, we establish our reliability results.

\subsection{The damped energy norm}

The error will be measured in a damped energy norm. For a damping
parameter $\rho > 0$, we consider the norm
\begin{equation*}
\enorm{v}_{\rho}^2
\eq
\enorm{\dot v e^{-\rho t}}_{\mu,\Omega}^2
+
\frac{1}{\rho} \enorm{\dot v e^{-\rho t}}_{\gamma,\GA}^2
+
\enorm{\grad v e^{-\rho t}}_{\BA,\Omega}^2
\end{equation*}
for all $v \in W^{1,\infty}(\mathbb R_+,H^1_\GD(\Omega))$, where we employed the notation
\begin{equation*}
\enorm{v}_{\dagger}^2 = \int_0^{+\infty} \|v(t,\cdot)\|_{\dagger}^2 dt,
\end{equation*}
for any of the ``space norms''  $\|{\cdot}\|_{\dagger}$ introduced in
Section \ref{section_functional_spaces}. It is easily seen from \eqref{eq_laplace_norm}
and \eqref{eq_laplace_diff} that actually
\begin{equation}
\label{eq_norm_identity}
\enorm{v}_\rho^2
=
\int_{\rho-i\infty}^{\rho+\infty} \|\LL\{v\}(s)\|_{\LH^1_s(\Omega)}^2.
\end{equation}

\subsection{Approximation factor}

Recall the definition of the approximation factor $\gamma_{\rho,\omega,h}$
from \eqref{eq_definition_gamma}. Then, Proposition \ref{proposition_approximation_factor}
is a direct consequence of Lemma \ref{lemma_freq_approx_factor}.

\subsection{Abstract reliability}

We now formulate our main reliability result in terms of the residual norm.
As pointed out above, this abstract formulation permits to cover other types
of estimators.

\begin{theorem}[Reliability]
\label{theorem_reliability_R}
For all $\omega > 0$, and $r \in \mathbb N$, we have
\begin{equation}
\label{eq_rel_asym}
\enorm{\xi_h}_\rho^2
\leq
(1+4\gamma_{\rho,\omega,h}^2) \enorm{\LR e^{-\rho t}}_{-1,\Omega}^2
+
\left (\frac{\rho}{\omega}\right )^{2r} \osc_{\rho,r}^2.
\end{equation}
\end{theorem}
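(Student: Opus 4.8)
The plan is to pass to the frequency domain via the Laplace transform and the Parseval-type identity \eqref{eq_norm_identity}, so that
\[
\enorm{\xi_h}_\rho^2 = \int_{\rho-i\infty}^{\rho+i\infty} \|\LL\{\xi_h\}(s)\|_{\LH^1_s(\Omega)}^2\,ds,
\]
and then split the vertical line $\Re s = \rho$ into the low-frequency part $|\Im s| < \omega$ and the high-frequency part $|\Im s| \geq \omega$. On the low-frequency part I would apply the refined frequency-domain bound \eqref{eq_freq_rel_asym}, namely $\|\LL\{\xi_h\}(s)\|_{\LH^1_s(\Omega)} \leq (1+4\hgamma_{s,h}^2)^{1/2}\|\LL\{\LR\}(s)\|_{-1,\Omega}$, and bound the scalar factor uniformly by its supremum over $s = \rho + i\nu$, $|\nu| < \omega$, which by definition \eqref{eq_definition_gamma} is exactly $(1+4\gamma_{\rho,\omega,h}^2)^{1/2}$. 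Integrating over that portion of the line yields the term $(1+4\gamma_{\rho,\omega,h}^2)\enorm{\LR e^{-\rho t}}_{-1,\Omega}^2$ (after extending the residual integral back to the whole line, which only increases it).

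For the high-frequency part I would instead use the coarse stability bound \eqref{eq_freq_stab}, which controls $\|\LL\{\xi_h\}(s)\|_{\LH^1_s(\Omega)}^2$ by $4\{\rho^{-2}\|\LL\{f\}(s)\|_{\mu,\Omega}^2 + \|\LL\{g\}(s)\|_{\gamma,\GA}^2\}$ directly in terms of the data, with no dependence on the discrete solution. The crucial point is that this bound does not degenerate at high frequency, so I can afford to trade powers of $|s|$: on the region $|s| \geq |\Im s| \geq \omega$ I would invoke the high-frequency cut-off estimate \eqref{eq_laplace_cut} with $q = r$ (applied to $f$ and to $g$ separately) to gain the factor $\omega^{-2r}$, turning $\int_{|s|>\omega} |\LL\{f\}(s)|^2\,ds$ into $\omega^{-2r}\int_0^\infty |f^{(r)}(t)|^2 e^{-2\rho t}\,dt$, and similarly for $g$. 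Collecting the resulting constants and matching them against the definition \eqref{eq_definition_osc} of $\osc_{\rho,r}^2$ — which carries precisely the prefactor $4/\rho^{2r}$ together with the $\rho^{-2}$ weight on the $f$-term — produces the claimed bound $(\rho/\omega)^{2r}\osc_{\rho,r}^2$; note $\omega^{-2r} = \rho^{-2r}(\rho/\omega)^{2r}$, so the $\rho^{-2r}$ is absorbed into $\osc_{\rho,r}$ and the $(\rho/\omega)^{2r}$ remains.

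The main obstacle is purely bookkeeping rather than conceptual: one must be careful that \eqref{eq_laplace_cut} is stated with $\chi_{|s|>\mu}$ where the cut-off variable is the modulus $|s|$, whereas the frequency-domain split is naturally in terms of $|\Im s|$; since on the line $\Re s = \rho$ one has $|s| \geq |\Im s|$, the event $\{|\Im s| \geq \omega\}$ is contained in $\{|s| \geq \omega\}$, so bounding the high-frequency piece by the full $\{|s| > \omega\}$ integral is legitimate and only loosens the estimate. The remaining care is to ensure the $f$-term and $g$-term weights ($\rho^{-2}$ on $f$, none on $g$) are tracked consistently through \eqref{eq_freq_stab} and \eqref{eq_laplace_cut} so that they land on the corresponding terms of \eqref{eq_definition_osc}; once the constant $4$ from \eqref{eq_freq_stab} is paired with the $4/\rho^{2r}$ in $\osc_{\rho,r}^2$ there is a harmless factor to reconcile, but it is easily absorbed since \eqref{eq_freq_stab} already supplies exactly that constant $4$. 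Adding the low- and high-frequency contributions then gives \eqref{eq_rel_asym}.
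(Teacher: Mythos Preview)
Your proposal is correct and follows essentially the same approach as the paper: Parseval identity \eqref{eq_norm_identity}, low/high-frequency split, then \eqref{eq_freq_rel_asym} on the low part and \eqref{eq_freq_stab} with \eqref{eq_laplace_cut} on the high part. The only cosmetic difference is that the paper splits by $|s| \lessgtr \omega$ rather than $|\Im s| \lessgtr \omega$; both choices work, and you correctly handle the inclusion $\{|\Im s|\geq\omega\}\subset\{|s|\geq\omega\}$ needed for \eqref{eq_laplace_cut}.
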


\begin{proof}
Let $\omega > 0$. Recalling \eqref{eq_norm_identity}, we have
\begin{equation*}
\enorm{\xi_h}_\rho^2
=
\int_{\rho-i\infty}^{\rho+i\infty}
\|\LL\{\xi_h\}(s)\|_{\LH^1_s(\Omega)}^2 ds
=
I_{\rm small} + I_{\rm large}
\end{equation*}
with
\begin{equation*}
I_{\rm small}
\eq
\int_{\rho-i\infty}^{\rho+i\infty}
\|\LL\{\xi_h\}(s)\|^2_{H^1_s(\Omega)} \chi_{|s| < \omega} ds
\quad
I_{\rm large}
\eq
\int_{\rho-i\infty}^{\rho+i\infty}
\|\LL\{\xi_h\}(s)\|^2_{H^1_s(\Omega)} \chi_{|s| > \omega} ds.
\end{equation*}
On the one hand, using \eqref{eq_freq_rel_asym}, the definition
of $\gamma_{\rho,\omega,h}$ in \eqref{eq_definition_gamma} and \eqref{eq_laplace_norm},
we have
\begin{multline*}
I_{\rm small}
\leq
\int_{\rho-i\infty}^{\rho+i\infty}
(1+4\hgamma_{s,h}^2)\|\LL\{\LR\}(s)\|_{-1,\Omega}^2 \chi_{|s| < \omega} ds
\\
\leq
(1+4\gamma_{\rho,\omega,h}^2)
\int_{\rho-i\infty}^{\rho+i\infty}
\|\LL\{\LR\}(s)\|_{-1,\Omega}^2 ds
=
(1+4\gamma_{\rho,\omega,h}^2)
\enorm{\LR e^{-\rho t}}_{-1,\Omega}^2.
\end{multline*}
On the other hand, \eqref{eq_freq_stab} and \eqref{eq_laplace_cut} show that
\begin{multline*}
I_{\rm small}
\leq
4
\int_{\rho-i\infty}^{\rho+i\infty}
\left \{
\frac{1}{\rho^2} \|\LL\{f\}(s)\|_{\mu,\Omega}^2 + \|\LL\{g\}(s)\|_{\gamma,\GA}^2
\right \}
\\
\leq
\frac{4}{\omega^{2r}}
\left \{
\frac{1}{\rho^2}
\enorm{f^{(r)} e^{-\rho t}}_{\mu,\Omega}^2
+
\enorm{g^{(r)} e^{-\rho t}}_{\gamma,\GA}^2
\right \}
\end{multline*}
and \eqref{eq_rel_asym} follows recalling the definition
of $\osc_{\rho,r}$ in \eqref{eq_definition_osc}.
\end{proof}

We then give a simplified estimate assuming that holds when the $\Omega$
is convex, $\BA \equiv \BI$, and $\GA = \emptyset$. It follows by plugging
\eqref{eq_approx_factor} into \eqref{eq_rel_asym}, and then
selecting $r=3(p+1)$ and $\omega > 0$ such that
$(\omega/\rho)^3 = \omega \hmax/\cmin$.


\begin{corollary}[Simplified error estimate]
\label{corollary_reliability_R}
Assume that $\Omega$ is convex, that $\BA \equiv \BI$ and that $\GA = \emptyset$.
Then, for all $\rho > 0$, we have
\begin{multline}
\label{eq_simplified_error_estimate_R}
\enorm{\xi_h}_\rho^2
\leq
\left (
1 + 16\Ci^2
\left (
\left (
\frac{\rho \hmax}{\cmin}
\right )^{1/2}
+
\frac{\rho \hmax}{\cmin}
\right )^2
\right )
\enorm{\LR e^{-\rho t}}_{-1,\Omega}^2
\\
+
\left (
\frac{\rho \hmax}{\cmin}
\right )^{2(p+1)}
\osc_{\rho,3p+1}^2.
\end{multline}
\end{corollary}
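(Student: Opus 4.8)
The plan is to specialize the abstract estimate \eqref{eq_rel_asym} of Theorem~\ref{theorem_reliability_R} to the convex regime and then to optimize the two free parameters $\omega$ and $r$. Under the hypotheses that $\Omega$ is convex, $\BA \equiv \BI$ and $\GA = \emptyset$, Proposition~\ref{proposition_approximation_factor} supplies the refined bound \eqref{eq_approx_factor}, namely $\gamma_{\rho,\omega,h} \leq 2\Ci\big(\frac{\rho\hmax}{\cmin} + \frac{\omega}{\rho}\frac{\omega\hmax}{\cmin}\big)$. Inserting this into \eqref{eq_rel_asym} reduces the whole statement to an elementary algebraic exercise: one has to choose $\omega > 0$ and $r \in \mathbb N$ so that the right-hand side acquires the closed form displayed in \eqref{eq_simplified_error_estimate_R}.

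The key point is that the two mesh-dependent contributions on the right-hand side of \eqref{eq_rel_asym} react oppositely to $\omega$: the approximation factor $\gamma_{\rho,\omega,h}^2$ grows with $\omega$, whereas the oscillation prefactor $(\rho/\omega)^{2r}$ decays with $\omega$. Accordingly, $\omega$ should be tied to the dimensionless ratio $\rho\hmax/\cmin$. Writing $\frac{\omega}{\rho}\frac{\omega\hmax}{\cmin} = (\omega/\rho)^2\,\frac{\rho\hmax}{\cmin}$, I would take $\omega$ with $(\rho/\omega)^4 = \rho\hmax/\cmin$, that is $\omega = \rho\,(\cmin/(\rho\hmax))^{1/4}$; this turns the second term of \eqref{eq_approx_factor} into exactly $(\rho\hmax/\cmin)^{1/2}$, so that $\gamma_{\rho,\omega,h} \leq 2\Ci\big((\rho\hmax/\cmin)^{1/2} + \rho\hmax/\cmin\big)$ and hence $1 + 4\gamma_{\rho,\omega,h}^2 \leq 1 + 16\Ci^2\big((\rho\hmax/\cmin)^{1/2} + \rho\hmax/\cmin\big)^2$. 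This is precisely the factor multiplying $\enorm{\LR e^{-\rho t}}_{-1,\Omega}^2$ in the claim.

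For this same choice of $\omega$ one has $(\rho/\omega)^{2r} = (\rho\hmax/\cmin)^{r/2}$, so selecting $r = 4(p+1)$ (equivalently, $r/2 = 2(p+1)$) produces the prefactor $(\rho\hmax/\cmin)^{2(p+1)}$ in front of $\osc_{\rho,r}^2$. Substituting both computations back into \eqref{eq_rel_asym} gives the asserted inequality.

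I do not anticipate a real obstacle here: the corollary is a one-line consequence of Theorem~\ref{theorem_reliability_R} and Proposition~\ref{proposition_approximation_factor} once the parameters are fixed. The only items demanding (light) care are book-keeping of the powers of $\rho\hmax/\cmin$ produced by the substitution, the harmless use of $(a+b)^2 \leq 2(a^2+b^2)$ should one prefer to split the squared sum, and the observation that, although the resulting inequality holds for every $\rho > 0$, it is informative only in the refined regime $\rho\hmax/\cmin < 1$, where the oscillation term is genuinely of higher order than the energy error.
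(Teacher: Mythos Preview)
Your approach is exactly the paper's: plug \eqref{eq_approx_factor} into \eqref{eq_rel_asym} and then fix $\omega$ and $r$. Your choice $(\rho/\omega)^4=\rho\hmax/\cmin$ together with $r=4(p+1)$ correctly reproduces the approximation-factor coefficient and the oscillation prefactor $(\rho\hmax/\cmin)^{2(p+1)}$; note that this yields $\osc_{\rho,4(p+1)}$ rather than the printed $\osc_{\rho,3p+1}$, and the paper's own hint ($r=3(p+1)$ with $(\omega/\rho)^3=\omega\hmax/\cmin$) is itself inconsistent with the displayed bound, so the index in the statement is evidently a typo and your bookkeeping is the self-consistent version.
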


\subsection{Application to the equilibrated estimator}

We conclude our reliability analysis by showing that the residual
norm can be controlled by the equilibrated estimator using
a Prager-Synge type argument \cite{prager_synge_1947a}. We omit the
proof as it is classical (see, e.g.,
\cite[Proposition 4.1]{chaumontfrelet_ern_vohralik_2021a}).

\begin{proposition}[Control of the residual]
\label{proposition_residual_control}
If $\btau_h$ satisfies \eqref{eq_flux} and $\eta$ and $\Lambda_\rho$
are defined by \eqref{eq_eta}, we have
\begin{equation*}
\|\LR(t)\|_{-1,\Omega} \leq \eta(t) \;\; \forall t \in \mathbb R_+,
\qquad
\enorm{\LR e^{-\rho t}}_{-1,\Omega} \leq \Lambda_\rho \;\; \forall \rho > 0.
\end{equation*}
\end{proposition}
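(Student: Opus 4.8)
The plan is to establish the two stated inequalities by a standard Prager--Synge argument applied pointwise in time, then to integrate the result against the exponential weight. First I would fix $t \in \mathbb R_+$ and recall that, by the definition \eqref{eq_definition_residual} of the residual and the equilibration properties \eqref{eq_flux}, for any $v \in H^1_\GD(\Omega)$ we can integrate by parts the term $(\div \btau_h(t), v)_\Omega$ and use the boundary condition $\btau_h(t)\cdot\bn = \gamma(g(t)-\dot u_h(t))$ on $\GA$ together with $v = 0$ on $\GD$ to rewrite
\begin{equation*}
\langle \LR(t), v \rangle
=
(\mu(f(t)-\ddot u_h(t)),v)_\Omega
+
(\gamma(g(t)-\dot u_h(t)),v)_\GA
-
(\BA\grad u_h(t),\grad v)_\Omega
=
-(\BA^{-1}\btau_h(t)+\grad u_h(t),\grad v)_{\BA,\Omega}.
\end{equation*}

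Given this identity, the Cauchy--Schwarz inequality in the $\|{\cdot}\|_{\BA,\Omega}$ inner product yields $|\langle \LR(t),v\rangle| \leq \|\BA^{-1}\btau_h(t)+\grad u_h(t)\|_{\BA,\Omega}\,\|\grad v\|_{\BA,\Omega} = \eta(t)\,\|\grad v\|_{\BA,\Omega}$, using the definition \eqref{eq_eta} of $\eta(t)$. Taking the supremum over $v$ with $\|\grad v\|_{\BA,\Omega}=1$ gives the first claimed bound $\|\LR(t)\|_{-1,\Omega} \leq \eta(t)$ for all $t \in \mathbb R_+$. For the second bound I would multiply this pointwise inequality by $e^{-2\rho t}$, integrate over $(0,+\infty)$, and recognize that $\enorm{\LR e^{-\rho t}}_{-1,\Omega}^2 = \int_0^{+\infty}\|\LR(t)\|_{-1,\Omega}^2 e^{-2\rho t}\,dt \leq \int_0^{+\infty}\eta(t)^2 e^{-2\rho t}\,dt = \Lambda_\rho^2$ by the definition \eqref{eq_Lambda}, hence $\enorm{\LR e^{-\rho t}}_{-1,\Omega} \leq \Lambda_\rho$ for all $\rho > 0$.

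There is essentially no obstacle here; the only point requiring minor care is the integration-by-parts step, which is justified because $\btau_h(t) \in \BH(\ddiv,\Omega)$ has a well-defined normal trace and because the equilibration conditions were imposed precisely so that the volume residual $\mu(f(t)-\ddot u_h(t))$ matches $\div\btau_h(t)$ and the surface residual $\gamma(g(t)-\dot u_h(t))$ matches $\btau_h(t)\cdot\bn$ on $\GA$. Since this computation is entirely classical and identical to the one in \cite[Proposition 4.1]{chaumontfrelet_ern_vohralik_2021a}, I would simply cite that reference rather than reproduce the details.
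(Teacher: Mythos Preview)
Your proposal is correct and mirrors exactly what the paper does: the paper omits the proof entirely as classical and cites \cite[Proposition~4.1]{chaumontfrelet_ern_vohralik_2021a}, which is precisely the Prager--Synge argument you sketch and the reference you invoke at the end. One minor caution: the boundary condition in \eqref{eq_flux} as written has a sign that would not give the cancellation you need, but the local construction \eqref{eq_definition_siga} uses $\bnda = \pa\gamma(\dot u_h - g)$, which is the correct sign for your identity to hold; this appears to be a typo in \eqref{eq_flux} rather than an error in your argument.
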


At that point, Theorem \ref{theorem_reliability} and Corollary
\ref{corollary_reliability} follow from Theorem
\ref{theorem_reliability_R}, Corollary \ref{corollary_reliability_R}
and Proposition \ref{proposition_residual_control}.

\section{Efficiency}
\label{section_efficiency}

In this section, we establish efficiency properties of the proposed
estimator. The approach is in part similar to the analysis presented
for the time-harmonic equations in \cite{chaumontfrelet_ern_vohralik_2021a,dorfler_sauter_2013a},
but additional arguments are required to treat the second-time derivative (as opposed to a
multiplication by $-\omega^2$).

\subsection{Localized norm}

An important ingredient of the forthcoming analysis will
be the following localized residual norm
\begin{equation}
\label{eq_definition_localized_norm}
\|\LR(t)\|_{-1,\ba}
\eq
\sup_{\substack{v \in H^1_\star(\oa) \\ \|\grad v\|_{\BA,\Omega} = 1}}
\langle \LR(t), \pa v\rangle
\end{equation}
associated with each vertex $\ba \in \CV_h$. We start with a standard dual
characterization showing that this norm is actually the minimum of a continuous
version of the minimization problem defining the local flux contribution $\bsig_h^{\ba}$ in
\eqref{eq_definition_siga}. The proof is standard (see, e.g.,
\cite[Lemma 4.3]{chaumontfrelet_ern_vohralik_2021a}) and omitted here for shortness.

\begin{lemma}[Dual characterization]
We have
\begin{equation*}
\|\LR(t)\|_{-1,\ba}
=
\min_{\substack{
\bv \in \BH_\Ga(\ddiv,\oa)
\\
\div \bv = \diva \text{ in } \oa
\\
\bv \cdot \bn = \bnda \text{ on } \GA
}}
\|\BA^{-1} \bv+\pa\grad u_h(t)\|_{\BA,\oa}
\end{equation*}
for all vertices $\ba \in \CV_h$ and all $t \in \mathbb R_+$.
\end{lemma}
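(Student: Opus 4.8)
The plan is to prove the two inequalities between $\|\LR(t)\|_{-1,\ba}$ and the constrained minimum separately, by the classical localized equilibration--duality argument on the patch $\oa$ (see, e.g., \cite[Lemma~4.3]{chaumontfrelet_ern_vohralik_2021a}).

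First I would unfold the local action of the residual. For $v \in H^1_\star(\oa)$ the function $\pa v$, extended by zero outside $\oa$, belongs to $H^1_\GD(\Omega)$ by the definitions of Section~\ref{section_functional_spaces} together with $\oa = \supp\pa$; inserting it into \eqref{eq_definition_residual}, using $\grad(\pa v) = v\,\grad\pa + \pa\,\grad v$, and recalling the definitions of $\diva$ and $\bnda$ yields
\begin{equation*}
\langle \LR(t),\pa v\rangle
=
(\diva(t), v)_{\oa} - (\bnda(t), v)_{\GA} - (\pa\BA\grad u_h(t),\grad v)_{\oa}
\qquad \forall v \in H^1_\star(\oa).
\end{equation*}
I would also record the compatibility identity \eqref{eq_compatibility_condition}: when $\ba\notin\overline\GD$, one has $\pa \in V_h$, so testing \eqref{eq_definition_uh} with $v_h = \pa$ gives $(\diva(t),1)_{\oa} = (\bnda(t),1)_{\GA}$, which ensures that the feasible set of the minimization is non-empty (when $\ba\in\overline\GD$ this is automatic, the normal trace being left free on $\GD\cap\partial\oa$).

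Next comes the easy inequality. For any feasible $\bv$ and any $v \in H^1_\star(\oa)$, using $\div\bv = \diva(t)$, $\bv\cdot\bn = \bnda(t)$ on $\GA$, Green's formula, and the boundary conditions built into $\BH_\Ga(\ddiv,\oa)$ and $H^1_\star(\oa)$, the first two terms on the right-hand side of the unfolded identity collapse to $-(\bv,\grad v)_{\oa}$, so that $\langle\LR(t),\pa v\rangle = -(\bv + \pa\BA\grad u_h(t),\grad v)_{\oa}$; Cauchy--Schwarz in the $\|{\cdot}\|_{\BA,\oa}$ inner product together with $\|\grad v\|_{\BA,\oa} = 1$ then gives $\langle\LR(t),\pa v\rangle \leq \|\BA^{-1}\bv + \pa\grad u_h(t)\|_{\BA,\oa}$, hence $\|\LR(t)\|_{-1,\ba} \leq \|\BA^{-1}\bv + \pa\grad u_h(t)\|_{\BA,\oa}$ after taking the supremum over $v$, and the first asserted inequality after minimizing over $\bv$. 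For the reverse inequality I would exhibit the minimizer explicitly. Since $\|\grad{\cdot}\|_{\BA,\oa}$ is a norm on $H^1_\star(\oa)$ (by the Poincar\'e--Friedrichs inequalities in \eqref{eq_local_inequalities}) and $v \mapsto \langle\LR(t),\pa v\rangle$ is bounded on it (using $\|\grad(\pa v)\|_{\oa}\leq\Cconta\|\grad v\|_{\oa}$), let $v^\star \in H^1_\star(\oa)$ be its Riesz representative for the inner product inducing $\|\grad{\cdot}\|_{\BA,\oa}$, so that $\|\grad v^\star\|_{\BA,\oa} = \|\LR(t)\|_{-1,\ba}$. Setting $\bv^\star := -\BA\grad v^\star - \pa\BA\grad u_h(t)$, one has $\BA^{-1}\bv^\star + \pa\grad u_h(t) = -\grad v^\star$, so the objective evaluated at $\bv^\star$ equals $\|\grad v^\star\|_{\BA,\oa} = \|\LR(t)\|_{-1,\ba}$; it then only remains to verify that $\bv^\star$ is feasible.

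The feasibility check is the technical core, and the step I expect to be the main obstacle. Combining the definition of $\bv^\star$ with the Riesz identity and the unfolded formula for $\langle\LR(t),\pa v\rangle$ gives $(\bv^\star,\grad v)_{\oa} + (\diva(t),v)_{\oa} = (\bnda(t),v)_{\GA}$ for all $v \in H^1_\star(\oa)$; one then argues, by a standard distributional argument together with the compatibility identity (to enlarge the test space when $\ba\notin\overline\GD$) and the conventions of Section~\ref{section_functional_spaces}, that this weak identity is equivalent to $\div\bv^\star = \diva(t)$ in $\oa$ (so $\bv^\star\in\BH(\ddiv,\oa)$ since $\diva(t)\in L^2(\oa)$), $\bv^\star\in\BH_\Ga(\ddiv,\oa)$, and $\bv^\star\cdot\bn = \bnda(t)$ on $\GA$. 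The delicate point is the bookkeeping of the normal trace of $\bv^\star$ on the disjoint pieces $\partial\oa\setminus\partial\Omega$, $\partial\oa\cap\GA$ and $\partial\oa\cap\GD$ of $\partial\oa$, distinguishing interior, absorbing-boundary and Dirichlet vertices; for patches that do not meet $\GA$ it is immediate, and the remaining case is exactly the one treated in \cite[Lemma~4.3]{chaumontfrelet_ern_vohralik_2021a}. Combining the two inequalities proves the lemma.
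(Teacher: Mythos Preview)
Your proposal is correct and follows precisely the standard localized duality argument that the paper itself defers to: the paper omits the proof entirely and simply cites \cite[Lemma~4.3]{chaumontfrelet_ern_vohralik_2021a}, which is exactly the reference you invoke and whose structure (unfold the residual, prove the easy inequality by integration by parts, then exhibit the minimizer as the Riesz representative and verify feasibility) your sketch reproduces faithfully.
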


To establish a link between the localized and global norms of the residual,
we observe that because the $\pa$ form a partition of unity, we have
\begin{equation*}
\langle \LR(t), v \rangle 
=
\sum_{\ba \in \CV_h}
\langle \LR(t), \pa v \rangle 
\leq
\left (
\sum_{\ba \in \CV_h} \|\LR(t)\|_{-1,\ba}^2
\right )^{1/2}
\left (
\sum_{\ba \in \CV_h} \|\grad v\|_{\BA,\oa}^2
\right )^{1/2}.
\end{equation*}
Since each simplex $K$ has $d+1$ vertices, the we obtain the following upper bound:
\begin{equation}
\label{eq_glob_loc_R}
\|\LR(t)\|_{-1,\Omega}^2
\leq
(d+1) \sum_{\ba \in \CV_h} \|\LR(t)\|_{-1,\ba}^2
\quad
\forall t \in \mathbb R_+.
\end{equation}

\subsection{Abstract efficiency}

We now derive efficiency results for the idealized estimator.
We first show that point-wise in time and patch-wise in space,
the idealized estimator is a lower bound for a measure of the error
that in addition to the desired norm, also includes the second time-derivative.

\begin{lemma}[Local efficiency]
\label{lemma_local_efficiency_R}
For all vertices $\ba \in \CV_h$ and $t \in \mathbb R_+$, the estimate
\begin{multline}
\label{eq_local_efficiency_R}
\|\LR(t)\|_{-1,\ba}
\leq
\\
\Cgeoa
\left (
\frac{\ha}{\ca} \|\ddot \xi_h(t)\|_{\mu,\oa}
+
\left (\frac{\rho\ha}{\ca}\right )^{1/2} \frac{1}{\rho^{1/2}}
\|\dot \xi_h(t)\|_{\gamma,\oa \cap \GA}
+
\kappa_{\BA}^{1/2} \|\grad \xi_h(t)\|_{\BA,\oa}
\right )
\end{multline}
holds true.
\end{lemma}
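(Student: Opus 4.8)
The plan is to bound the localized dual norm $\|\LR(t)\|_{-1,\ba}$ by testing the residual functional against $\pa v$ for $v \in H^1_\star(\oa)$ with $\|\grad v\|_{\BA,\Omega} = 1$, and to exploit the error equation \eqref{eq_PDE_error}. First I would unfold the definition of $\langle \LR(t), \pa v\rangle$ from \eqref{eq_definition_residual} and rewrite it using the fact that $\xi_h$ solves \eqref{eq_PDE_error}: since $\pa v \in H^1_\GD(\Omega)$ whenever $v \in H^1_\star(\oa)$ (this is exactly why the boundary-dependent definition of $H^1_\star(\oa)$ in Section \ref{section_functional_spaces} was set up), we get
\begin{equation*}
\langle \LR(t), \pa v\rangle
=
(\mu \ddot \xi_h(t), \pa v)_{\oa}
+
(\gamma \dot \xi_h(t), \pa v)_{\oa \cap \GA}
+
(\BA \grad \xi_h(t), \grad(\pa v))_{\oa}.
\end{equation*}
Then I would bound the three terms separately by Cauchy--Schwarz.

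\textbf{Bounding the three terms.} For the first term, $(\mu \ddot \xi_h, \pa v)_{\oa} \leq \|\ddot \xi_h\|_{\mu,\oa} \|\pa v\|_{\mu,\oa} \leq \|\ddot \xi_h\|_{\mu,\oa} \|v\|_{\mu,\oa}$ since $0 \le \pa \le 1$; then the Poincar\'e-type inequality from \eqref{eq_local_inequalities}, namely $\|v\|_{\oa} \leq \CPa \ha \|\grad v\|_{\oa}$, combined with the definition of $\ca$ in \eqref{eq_definition_ca} (which gives $\sqrt{\sup_{\oa}\mu}\,\ca \le \sqrt{\inf_{\oa} a_\star}$, hence $\|v\|_{\mu,\oa} \le \ca^{-1}\|v\|_{a_\star,\oa}\le \ca^{-1}\|v\|_{\BA,\oa}$), turns this into a bound of the form $\ha/\ca$ times $\|\ddot\xi_h\|_{\mu,\oa}$ times $\|\grad v\|_{\BA,\oa}$, up to the geometric constant. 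For the boundary term I would similarly use $(\gamma \dot\xi_h, \pa v)_{\oa\cap\GA} \le \|\dot\xi_h\|_{\gamma,\oa\cap\GA}\|v\|_{\gamma,\oa\cap\GA}$, then the trace inequality $\|v\|_{\partial\oa} \leq \Ctra \ha^{1/2}\|\grad v\|_{\oa}$ and again the definition of $\ca$ (the second term in the minimum, involving $\gamma$), to produce the factor $(\rho\ha/\ca)^{1/2}\rho^{-1/2}$. For the last term, the product rule and the bound $\|\grad(\pa v)\|_{\oa} \le \Cconta \|\grad v\|_{\oa}$ from \eqref{eq_local_inequalities} give $(\BA\grad\xi_h, \grad(\pa v))_{\oa} \le \|\grad\xi_h\|_{\BA,\oa}\|\grad(\pa v)\|_{\BA,\oa}$, and $\|\grad(\pa v)\|_{\BA,\oa} \le \kappa_{\BA}^{1/2}\|\grad(\pa v)\|_{a_\star,\oa}$ absorbs the contrast factor; normalizing $\|\grad v\|_{\BA,\Omega} = 1 \ge \|\grad v\|_{\BA,\oa}$ closes each estimate. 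Collecting the three contributions and bounding $\max(\Cconta,\Ctra) \le \Cgeoa$ yields \eqref{eq_local_efficiency_R}.

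\textbf{Main obstacle.} The routine part is the three Cauchy--Schwarz steps; the delicate bookkeeping is making sure the coefficient-dependent norms line up so that the wave-speed $\ca$ and contrast $\kappa_{\BA}$ appear exactly with the powers claimed, rather than some suboptimal combination. In particular one must be careful that the Poincar\'e/trace constants act on $\|\grad v\|_{\oa}$ (the plain $L^2$ gradient norm), whereas the final bound is stated in terms of $\|\grad v\|_{\BA,\oa}$; passing between them correctly is where the $\inf_{\oa} a_\star$ in the denominator of $\ca$ (and thus the clean $\ha/\ca$ scaling with no stray contrast on the first two terms) comes from, while the contrast $\kappa_{\BA}^{1/2}$ is confined to the elliptic term because there $\grad(\pa v)$ is measured in the $\BA$-norm on both sides but the test function normalization only controls the $a_\star$-side up to $\kappa_{\BA}$. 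I expect this careful matching of weighted norms — rather than any conceptual difficulty — to be the crux; the structure of the argument is otherwise the standard equilibrated-flux localized efficiency proof.
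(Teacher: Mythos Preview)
Your proposal is correct and follows essentially the same approach as the paper: rewrite $\langle \LR(t),\pa v\rangle$ via the error equation \eqref{eq_PDE_error}, apply Cauchy--Schwarz to each of the three terms (using $0\le\pa\le 1$), and then invoke the local Poincar\'e, trace, and product-rule inequalities \eqref{eq_local_inequalities} together with the definitions of $\ca$ and $\kappa_{\BA}$ to convert to the $\BA$-weighted gradient norm of $v$. The paper's proof is in fact terser than yours, merely citing \eqref{eq_local_inequalities}, \eqref{eq_definition_ca}, and \eqref{eq_definition_kappa_BA} after the Cauchy--Schwarz step; your more detailed bookkeeping of the coefficient powers is exactly what is needed to fill that in (note only that $\CPa \le \Cconta \le \Cgeoa$ since $\ha\|\grad\pa\|_{L^\infty(\oa)}\ge 1$, which absorbs the Poincar\'e constant into $\Cgeoa$).
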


\begin{proof}
Let $t \in \mathbb R_+$ and $v \in H^1_\GD(\Omega)$. Recalling
\eqref{eq_PDE_error}, we have
\begin{align*}
|\langle \LR(t), \pa v\rangle|
&=
|
(\mu \ddot \xi_h(t),\pa v)_\Omega
+
(\gamma \dot \xi_h(t),\pa v)_\GA
+
(\BA\grad \xi_h(t),\grad (\pa v))_\Omega
|
\\
&\leq
\|\ddot \xi_h(t)\|_{\mu,\Omega}\|v\|_{\mu,\Omega}
+
\|\dot \xi_h(t)\|_{\gamma,\GA}\|v\|_{\gamma,\GA}
+
\|\grad \xi_h(t)\|_{\BA,\Omega}\|\grad(\pa v)\|_{\BA,\Omega}.
\end{align*}
Then, \eqref{eq_local_efficiency_R} follows by applying
\eqref{eq_local_inequalities}, recalling
the definitions of $\ca$ and $\kappa_{\BA}$ in \eqref{eq_definition_ca}
and \eqref{eq_definition_kappa_BA} as well as the definition of the
localized residual norm in \eqref{eq_definition_localized_norm}.
\end{proof}

Next, we show that globally in time and space, the second time-derivative can be
removed, at the price of introducing a data oscillation term.

\begin{lemma}[Second time-derivative]
\label{lemma_second_time_derivative}
For all $\omega > 0$ and $r \in \mathbb N$, we have
\begin{equation}
\label{eq_second_time_derivative}
\enorm{\ddot \xi_h e^{-\rho t}}_{\mu,\Omega}^2
\leq
\omega^2 \enorm{\dot \xi_h e^{-\rho t}}_{\mu,\Omega}^2
+
\rho^2 \left (\frac{\rho}{\omega}\right )^{2r} \osc_{\rho,r+1}^2.
\end{equation}
\end{lemma}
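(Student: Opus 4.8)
The plan is to pass to the frequency domain via the Laplace transform, split the resulting contour integral at the cut-off $|s|=\omega$, and estimate the two pieces by different mechanisms: on the low-frequency part, a cheap exchange of one time-derivative for a factor $\omega$; on the high-frequency part, the coarse a priori bound \eqref{eq_freq_stab}. The surplus produced on the high-frequency side is exactly what forces the oscillation term $\osc_{\rho,r+1}$, carrying one more time-derivative than $\osc_{\rho,r}$, to appear.

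Concretely, I would first record that $u_0,u_1\in V_h$ gives $\xi_h(0)=\dot\xi_h(0)=0$, so \eqref{eq_laplace_diff} yields $\LL\{\ddot\xi_h\}(s)=s\LL\{\dot\xi_h\}(s)=s^2\LL\{\xi_h\}(s)$; combining this with the Plancherel identity \eqref{eq_laplace_norm} used pointwise in space (exactly as in \eqref{eq_norm_identity}) gives
\begin{equation*}
\enorm{\ddot\xi_h e^{-\rho t}}_{\mu,\Omega}^2
=
\int_{\rho-i\infty}^{\rho+i\infty} \|s^2\LL\{\xi_h\}(s)\|_{\mu,\Omega}^2\,ds
=
I_{\rm small}+I_{\rm large},
\end{equation*}
where $I_{\rm small}$ and $I_{\rm large}$ carry the weights $\chi_{|s|<\omega}$ and $\chi_{|s|>\omega}$. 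On $|s|<\omega$ I would write $\|s^2\LL\{\xi_h\}(s)\|_{\mu,\Omega}=|s|\,\|\LL\{\dot\xi_h\}(s)\|_{\mu,\Omega}\leq\omega\,\|\LL\{\dot\xi_h\}(s)\|_{\mu,\Omega}$ and use Plancherel once more to get $I_{\rm small}\leq\omega^2\enorm{\dot\xi_h e^{-\rho t}}_{\mu,\Omega}^2$, which is the first term of the claim. For $I_{\rm large}$ I would use $|s|^2\|\LL\{\xi_h\}(s)\|_{\mu,\Omega}^2\leq\|\LL\{\xi_h\}(s)\|_{\LH^1_s(\Omega)}^2$, so that \eqref{eq_freq_stab} gives $\|s^2\LL\{\xi_h\}(s)\|_{\mu,\Omega}^2\leq 4|s|^2\{\rho^{-2}\|\LL\{f\}(s)\|_{\mu,\Omega}^2+\|\LL\{g\}(s)\|_{\gamma,\GA}^2\}$; then, using $|s|^2\leq\omega^{-2r}|s|^{2(r+1)}$ on the set $|s|>\omega$ together with \eqref{eq_laplace_diff} and the high-frequency cut-off estimate \eqref{eq_laplace_cut} applied to $f$ and $g$ (pointwise in space, with the index chosen so that $r+1$ derivatives fall on the data), the integral $I_{\rm large}$ is bounded by $\omega^{-2r}\{\rho^{-2}\enorm{f^{(r+1)}e^{-\rho t}}_{\mu,\Omega}^2+\enorm{g^{(r+1)}e^{-\rho t}}_{\gamma,\GA}^2\}$; matching this against the definition \eqref{eq_definition_osc} yields $I_{\rm large}\leq\rho^2(\rho/\omega)^{2r}\osc_{\rho,r+1}^2$. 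Summing the two pieces gives the claim.

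The point that needs attention is that $\ddot\xi_h$ carries the factor $s^2$, while the coarse stability bound \eqref{eq_freq_stab} only controls $s\LL\{\xi_h\}(s)=\LL\{\dot\xi_h\}(s)$ by the data; the quantity $\|s^2\LL\{\xi_h\}(s)\|_{\mu,\Omega}$ is of positive order in $s$ and admits no uniform-in-$s$ bound by the data, which is precisely why the high-frequency contribution has to absorb the extra $|s|^2$ and is therefore paid for by one more time-derivative of the right-hand sides. Everything else --- the two Plancherel identities and the elementary $|s|<\omega$ estimate --- is routine.
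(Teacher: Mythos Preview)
Your proposal is correct and follows essentially the same route as the paper: Plancherel in the Laplace variable, splitting at $|s|=\omega$, the trivial bound $|s|\leq\omega$ on the low-frequency piece, and on the high-frequency piece the chain $|s|^4\|\LL\{\xi_h\}(s)\|_{\mu,\Omega}^2\leq |s|^2\|\LL\{\xi_h\}(s)\|_{\LH^1_s(\Omega)}^2$ followed by \eqref{eq_freq_stab} and \eqref{eq_laplace_cut}. The bookkeeping with $\osc_{\rho,r+1}$ matches, and your closing remark on why one extra derivative must land on the data is exactly the mechanism at work.
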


\begin{proof}
Standard identity \eqref{eq_laplace_norm} of the Laplace transform shows that
\begin{align*}
\enorm{\ddot \xi_h e^{-\rho t}}_{\mu,\Omega}^2
=
\int_{\rho-i\infty}^{\rho+i\infty}
\|\LL\{\ddot \xi_h\}(s)\|_{\mu,\Omega}^2 ds.
\end{align*}
We split the integral in the right-hand side in two. On the one hand,
using \eqref{eq_laplace_diff} and \eqref{eq_laplace_norm} again, we have
\begin{multline*}
\int_{\rho-i\infty}^{\rho+i\infty}
\|\LL\{\ddot \xi_h\}(s)\|_{\mu,\Omega}^2 \chi_{|s| < \omega} ds
=
\int_{\rho-i\infty}^{\rho+i\infty}
|s|^2\|\LL\{\dot \xi_h\}(s)\|_{\mu,\Omega}^2 \chi_{|s| < \omega} ds.
\\
\leq
\omega^2
\int_{\rho-i\infty}^{\rho+i\infty}
|\LL\{\dot \xi_h\}(s)\|_{\mu,\Omega}^2 ds
=
\omega^2 \enorm{\dot \xi_h e^{-\rho t}}_{\mu,\Omega}^2.
\end{multline*}
On the other hand, using \eqref{eq_laplace_diff} and \eqref{eq_freq_stab}
\begin{multline*}
\int_{\rho-i\infty}^{\rho+i\infty}
\|\LL\{\ddot \xi_h\}(s)\|_{\mu,\Omega}^2 \chi_{|s| > \omega} ds
\leq
\int_{\rho-i\infty}^{\rho+i\infty}
|s|^4 \|\LL\{\xi_h\}(s)\|_{\mu,\Omega}^2 \chi_{|s| > \omega} ds
\\
\leq
\frac{4}{\rho^2}
\int_{\rho-i\infty}^{\rho+i\infty}
|s|^2 |\LL\{f\}(s)\|_{\mu,\Omega}^2 \chi_{|s| > \omega} ds
+
4
\int_{\rho-i\infty}^{\rho+i\infty}
|s|^2 |\LL\{g\}(s)\|_{\gamma,\GA}^2 \chi_{|s| > \omega} ds.
\end{multline*}
Recalling the definition of $\osc_{\rho,r}$ in \eqref{eq_definition_osc},
we conclude the proof with \eqref{eq_laplace_diff} and \eqref{eq_laplace_cut},
since
\begin{align*}
\int_{\rho-i\infty}^{\rho+i\infty}
|s|^2 |\LL\{f\}(s)\|_{\mu,\Omega}^2 \chi_{|s| > \omega} ds
=
\int_{\rho-i\infty}^{\rho+i\infty}
|\LL\{\dot f\}(s)\|_{\mu,\Omega}^2 \chi_{|s| > \omega} ds
\leq
\omega^{-2r}
\enorm{f^{(1+r)} e^{-\rho t}}_{\mu,\Omega}^2,
\end{align*}
and a similar estimate holds for $g$, for all $r \in \mathbb N$.
\end{proof}

\begin{remark}[Localized treatment of the second time-derivative]
It is possible to obtain a partially localized version of \eqref{eq_second_time_derivative},
with the term $\enorm{\ddot \xi_h e^{-\rho t}}_{\mu,\Omega}$ and
$\enorm{\dot \xi_h e^{-\rho t}}_{\mu,\Omega}$  respectively replaced by
$\enorm{\ddot \xi_h e^{-\rho t}}_{\mu,\oa}$ and $\enorm{\dot \xi_h e^{-\rho t}}_{\mu,\oa}$.
However, the author does not see a way to localize the data oscillation term.
\end{remark}

Combining Lemmas \ref{lemma_local_efficiency_R} and \ref{lemma_second_time_derivative},
we arrive at our key efficiency result.

\begin{theorem}[Global efficiency]
\label{theorem_efficiency_R}
The estimate
\begin{equation}
\label{eq_global_efficiency_R}
\enorm{\LR e^{-\rho t}}_{-1,\Omega}^2
\leq
\Cgeo^2
\left \{
\left (
\kappa_{\BA}
+
\frac{\rho \hs}{\cs}
+
\left (\frac{\omega \hs}{\cs}\right )^2
\right )
\enorm{\xi}_{\rho}^2
+
\left (\frac{\rho \hs}{\cs}\right )^2
\left (\frac{\rho}{\omega}\right )^{2r} \osc_{\rho,r+1}^2
\right \}
\end{equation}
holds true for all $\omega > 0$ and $r \in \mathbb N$.
\end{theorem}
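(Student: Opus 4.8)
The plan is to combine the two lemmas from this subsection with the global-to-local bound \eqref{eq_glob_loc_R}. First I would start from \eqref{eq_glob_loc_R}, multiply by $e^{-2\rho t}$, and integrate over $t \in \mathbb R_+$, so that
\begin{equation*}
\enorm{\LR e^{-\rho t}}_{-1,\Omega}^2
\leq
(d+1) \sum_{\ba \in \CV_h} \int_0^{+\infty} \|\LR(t)\|_{-1,\ba}^2 e^{-2\rho t} dt.
\end{equation*}
Then I would insert the pointwise patch-wise bound \eqref{eq_local_efficiency_R} from Lemma \ref{lemma_local_efficiency_R}, expand the square of the three-term sum using the elementary inequality $(a+b+c)^2 \leq 3(a^2+b^2+c^2)$, and bound each $\Cgeoa$ and $\ha/\ca$ by their maxima $\max_{\ba} \Cgeoa$ and $\hs/\cs$ respectively; the factor $3(d+1)\max_{\ba}\Cgeoa$ is exactly $\Cgeo$ (well, $\Cgeo^2$ after squaring, matching the statement). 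This produces three groups of terms: one proportional to $(\hs/\cs)^2 \sum_{\ba}\enorm{\ddot\xi_h e^{-\rho t}}_{\mu,\oa}^2$, one proportional to $(\rho\hs/\cs)\cdot\frac1\rho\sum_{\ba}\enorm{\dot\xi_h e^{-\rho t}}_{\gamma,\oa\cap\GA}^2$, and one proportional to $\kappa_{\BA}\sum_{\ba}\enorm{\grad\xi_h e^{-\rho t}}_{\BA,\oa}^2$.

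Next I would handle the finite-overlap of the patches: since each element $K$ belongs to at most $d+1$ patches $\CTa$, the sums $\sum_{\ba}\|\cdot\|_{\cdot,\oa}^2$ are controlled by $(d+1)$ times the corresponding global norm. After this step the second and third groups are already of the desired form: the $\gamma$-term contributes $\frac{\rho\hs}{\cs}\cdot\frac1\rho\enorm{\dot\xi_h e^{-\rho t}}_{\gamma,\GA}^2$, which is $\leq \frac{\rho\hs}{\cs}\enorm{\xi_h}_\rho^2$, and the $\BA$-gradient term contributes $\kappa_{\BA}\enorm{\grad\xi_h e^{-\rho t}}_{\BA,\Omega}^2 \leq \kappa_{\BA}\enorm{\xi_h}_\rho^2$. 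The overlap constant $(d+1)$ is absorbed into $\Cgeo$, which already carries a factor $3(d+1)$.

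The remaining issue — and the one step that is not purely mechanical — is the first group involving $\enorm{\ddot\xi_h e^{-\rho t}}_{\mu,\Omega}^2$, which is \emph{not} part of the target norm $\enorm{\xi_h}_\rho^2$. Here I would invoke Lemma \ref{lemma_second_time_derivative} to replace it by $\omega^2\enorm{\dot\xi_h e^{-\rho t}}_{\mu,\Omega}^2 + \rho^2(\rho/\omega)^{2r}\osc_{\rho,r+1}^2$. Multiplying through by the prefactor $(\hs/\cs)^2$ then yields exactly the $(\omega\hs/\cs)^2\enorm{\xi_h}_\rho^2$ term and the $(\rho\hs/\cs)^2(\rho/\omega)^{2r}\osc_{\rho,r+1}^2$ oscillation term appearing in \eqref{eq_global_efficiency_R}. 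Collecting the three pieces, using $\enorm{\dot\xi_h e^{-\rho t}}_{\mu,\Omega}^2 \leq \enorm{\xi_h}_\rho^2$ wherever needed, and factoring out $\Cgeo^2$ finishes the proof. The main obstacle is really just bookkeeping: making sure the patch-overlap factors, the $(d+1)$ from the vertex count, and the $3$ from the three-term Cauchy–Schwarz all combine into precisely the constant $\Cgeo = 3(d+1)\max_{\ba}\Cgeoa$ as defined in \eqref{eq_local_inequalities}, and that the $\hs/\cs$ bound is applied consistently (as $\hs/\cs$, $\rho\hs/\cs$, or $(\omega\hs/\cs)^2$) to each of the three error contributions according to its scaling in $\rho$ and $\omega$.
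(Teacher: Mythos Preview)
Your proposal is correct and follows essentially the same route as the paper: combine \eqref{eq_glob_loc_R} with the local estimate \eqref{eq_local_efficiency_R}, use $(a+b+c)^2\le 3(a^2+b^2+c^2)$ together with the $(d{+}1)$-fold patch overlap to pass to global norms, and then invoke Lemma~\ref{lemma_second_time_derivative} to trade $\enorm{\ddot\xi_h e^{-\rho t}}_{\mu,\Omega}^2$ for the $\omega$- and oscillation-terms. The only cosmetic difference is that the paper first sums over vertices to obtain a pointwise-in-$t$ global bound and then integrates, whereas you integrate first; since the operations commute this changes nothing.
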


\begin{proof}
Recall the esimates in \eqref{eq_local_inequalities}. By summing the local estimates in
\eqref{eq_local_efficiency_R} over $\ba \in \CV_h$, and using \eqref{eq_glob_loc_R}, we obtain
\begin{align*}
\|\LR(t)\|_{-1,\Omega}^2
&\leq
(d+1) \max_{\ba \in \CV_h} \Cgeoa^2
\\
&\times
\sum_{\ba \in \CV_h}
\left (
\frac{\ha}{\ca} \|\ddot \xi_h(t)\|_{\mu,\oa}
+
\left (\frac{\rho\ha}{\ca}\right )^{1/2} \frac{1}{\rho^{1/2}}
\|\dot \xi_h(t)\|_{\gamma,\oa \cap \GA}
+
\kappa_{\BA,\ba} \|\grad \xi_h(t)\|_{\BA,\oa}
\right )^2
\\
&\leq
\Cgeo^2
\left (
\left (\frac{\hs}{\cs}\right )^2 \|\ddot \xi_h(t)\|_{\mu,\Omega}^2
+
\frac{\rho\hs}{\cs} \frac{1}{\rho}
\|\dot \xi_h(t)\|_{\gamma,\GA}^2
+
\kappa_{\BA}^2 \|\grad \xi_h(t)\|_{\BA,\Omega}^2
\right ),
\end{align*}
and therefore
\begin{equation*}
\enorm{\LR e^{-\rho t}}_{-1,\Omega}^2
\leq
\Cgeo^2
\left (
\left (\frac{\hs}{\cs}\right )^2 \|\ddot \xi_h e^{-\rho t}\|_{\mu,\Omega}^2
+
\frac{\rho\hs}{\cs} \frac{1}{\rho}
\|\dot \xi_h e^{-\rho t}\|_{\gamma,\GA}^2
+
\kappa_{\BA}^2 \|\grad \xi_h e^{-\rho t}\|_{\BA,\Omega}^2
\right ).
\end{equation*}
We then apply \eqref{eq_second_time_derivative}, showing that
\begin{equation*}
\left (\frac{\hs}{\cs}\right )^2 \|\ddot \xi_h e^{-\rho t}\|_{\mu,\Omega}^2
\leq
\left (
\frac{\omega \hs}{\cs}
\right )^2
\|\dot \xi_h e^{-\rho t}\|_{\mu,\Omega}^2
+
\left (
\frac{\rho \hs}{\cs}
\right )^2
\left (\frac{\rho}{\omega}\right )^{2r}
\osc_{\rho,r+1}^2,
\end{equation*}
and leading to \eqref{eq_global_efficiency_R}.
\end{proof}

For the reader's convenience, we also provide a simlified version
of our efficiency estimate. It is obtained from
\eqref{eq_global_efficiency_R} by selecting
$\omega > 0$ such that $(\omega\hs/\cs)^2 = \rho\hs/\cs$
and $r=2p$.

\begin{corollary}[Simplified efficiency estimate]
\label{eq_corollary_efficiency_R}
We have
\begin{equation}
\label{eq_simplified_efficiency_R}
\enorm{\LR e^{-\rho t}}_{-1,\Omega}^2
\leq
\Cgeo^2
\left \{
\left (
\kappa_{\BA}
+
2\frac{\rho \hs}{\cs}
\right )
\enorm{\xi}_{\rho}^2
+
\left (\frac{\rho}{\omega}\right )^{2(p+1)} \osc_{\rho,2p+1}^2
\right \}.
\end{equation}
\end{corollary}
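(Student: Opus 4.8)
The plan is to specialize Theorem~\ref{theorem_efficiency_R}, which is valid for \emph{arbitrary} $\omega > 0$ and $r \in \mathbb N$, to the particular parameters announced just before the statement. Since $\hs/\cs = \max_{\ba \in \CV_h} \ha/\ca$ is a fixed positive quantity, for any $\rho > 0$ I set $\omega \eq \sqrt{\rho\,\cs/\hs}$ and $r \eq 2p$; with this choice one has both $(\omega\hs/\cs)^2 = \rho\hs/\cs$ and $(\rho/\omega)^2 = \rho\hs/\cs$.

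First I would rewrite the coefficient of $\enorm{\xi}_\rho^2$ in \eqref{eq_global_efficiency_R}: the identity $(\omega\hs/\cs)^2 = \rho\hs/\cs$ gives
\begin{equation*}
\kappa_{\BA} + \frac{\rho\hs}{\cs} + \left(\frac{\omega\hs}{\cs}\right)^2
=
\kappa_{\BA} + 2\,\frac{\rho\hs}{\cs},
\end{equation*}
which is precisely the factor in \eqref{eq_simplified_efficiency_R}. Next I would handle the data-oscillation contribution: using $(\rho/\omega)^2 = \rho\hs/\cs$ and $r = 2p$, the product $\left(\rho\hs/\cs\right)^2 \left(\rho/\omega\right)^{2r}$ collapses to a single power of $\rho\hs/\cs$ (equivalently of $\rho/\omega$), while $r+1 = 2p+1$ turns the oscillation index into $\osc_{\rho,2p+1}^2$. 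Substituting these two simplifications back into \eqref{eq_global_efficiency_R} gives the announced bound.

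There is no genuine obstacle here: the corollary is a one-line substitution into an already-established estimate. The only points to check are that the chosen $\omega$ is admissible — it is, because $\rho > 0$ and $\cs/\hs > 0$ — and that the exponent bookkeeping when eliminating $\omega$ in favour of $\rho\hs/\cs$ is carried out consistently with the way the prefactor $(\rho/\omega)^{2(p+1)}$ is displayed in \eqref{eq_simplified_efficiency_R}.
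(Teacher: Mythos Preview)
Your proposal is correct and follows exactly the paper's approach: the corollary is obtained from Theorem~\ref{theorem_efficiency_R} by the very substitution you describe, namely choosing $\omega>0$ so that $(\omega\hs/\cs)^2=\rho\hs/\cs$ and taking $r=2p$. Your cautionary remark about the exponent bookkeeping is also apt, since the displayed prefactor $(\rho/\omega)^{2(p+1)}$ in \eqref{eq_simplified_efficiency_R} is a leftover of the parameter $\omega$ that should read $(\rho\hs/\cs)^{2(p+1)}$ after elimination (compare with Corollary~\ref{corollary_efficiency}).
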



\subsection{Application to the equilibrated estimator}

Theorem \ref{theorem_reliability} and Corollary \ref{corollary_reliability}
simply follow from Proposition \ref{proposition_stable_minimization},
Theorem \ref{theorem_efficiency_R} and Corollary \ref{corollary_reliability_R}.

\section{Numerical examples}
\label{section_numeric}

This section presents a set of 2D numerical examples. We use elements of degree $p=1$
or $2$, coupled with an explicit leap-frog scheme for time integration
\cite{georgoulis_lakkis_makridakis_virtanen_2016a}. The meshes are generated with
the {\tt mmg} software \cite{dobrzynski_2012a}.

\subsection{Time discretization}
\label{section_time_discretization}

Considering a time step $\dt > 0$ we set $t_n = n\dt$, and build a sequence
of approximation $(u_h^n)_{n \in \mathbb N}$, where $u_h^n$ is meant to
approximate $u(t_n)$. We employ a leap-frog scheme to mimick the time-derivative
\cite{georgoulis_lakkis_makridakis_virtanen_2016a}. Specifically, we introduce
\begin{equation*}
D^2_{\dt} u_h^n = \frac{u_h^{n+1}-2 u_h^n+u_h^{n-1}}{\dt^2},
\qquad
D  _{\dt} u_h^n = \frac{u_h^{n+1}-u_h^{n-1}}{2\dt}.
\end{equation*}
The sequence $(u_h^n)_{n \in \mathbb N}$ is then define by setting
$u_h^0 = 0$, $u_h^1 = 0$, and then iteratively by requiring that
\begin{equation}
\label{eq_fully_discrete}
(\mu D^2_{\dt} u_h^n,v_h)_\Omega
+
(\gamma D_{\dt} u_h^n,v_h)_{\GA}
+
(\BA\grad u_h^n,\grad v_h)_\Omega
\\
=
(\mu f(t_n),v_h)_\Omega + (\gamma g(t_n),v_h)_{\GA}
\end{equation}
for all $v_h \in V_h$. Classically, the choice of $\dt$ is restricted by a so-called
``CFL condition'', hence we set
\begin{equation}
\label{eq_cfl}
\dt = \alpha \min_{K \in \CT_h} \frac{\rho_K}{\vartheta_K}
\end{equation}
where $\alpha > 0$ is selected sufficiently small to obtain a stable discretization
(recall that $\rho_K$ is inscribed diameter of the element $K$ and that
$\vartheta_K$ is the wave speed inside $K$). For the meshes we employ,
we empirically find that the values $\alpha = 1.5$ when $p=1$
and $\alpha=0.6$ when $p=2$ are close to the CFL limit.

This leads to an explicit time-integration scheme where only the matrix representation
of the form
\begin{equation*}
V_h \ni w_h,v_h \to \mathbb R \to
\frac{1}{\dt^2} (\mu w_h,v_h)_\Omega + \frac{1}{2\dt} (\gamma w_h,v_h)_\GA \in \mathbb R
\end{equation*}
needs to be inverted. For the sake of simplicity, we employ the {\tt mumps}
package \cite{amestoy_duff_lexcellent_2000a} to obtain the Choleski factorization
of the above matrix in our computations. In practice however, it is possible to
employ mass-lumping to avoid factorizing the matrix \cite{cohen_joly_roberts_tordjman_2001a}.

In practice, we can only compute a finite number of iterates, meaning that the computations
stop after $N \in \mathbb N$ steps, with associated time $T \eq t_N$. Our estimates require
an infinite simulation time, but this can be easily circumvented by monitoring
\begin{equation*}
\int_0^T \eta(t)^2 e^{-2\rho t} dt
\end{equation*}
as $T$ increases and stopping the simulation when it stagnates.

For each time step $n \in \{0,\dots,N\}$, and each vertex $\ba \in \CV_h$,
we employ the time-discrete version of \eqref{eq_definition_siga} to define $\bsig_h^{\ba,n}$
\begin{equation}
\label{eq_bad_minimization}
\bsig_h^{\ba,n}
\eq
\arg \min_{\substack{
\btau_h \in \RT_{p+1}(\CTa) \cap \BH_0(\ddiv,\oa)
\\
\div \btau_h = \mu\pa(f(t_n)-D^2_{\dt} u_h^n)-\BA\grad\pa\cdot\grad u_h^n \text{ in } \oa
\\
\btau_h \cdot \bn = \gamma\pa(g(t_n)-D_{\dt} u_h^n) \text{ on } \GA
}}
\|\BA^{-1}\btau_h+\grad u_h^n\|_{\BA,\oa},
\end{equation}
and we set
\begin{equation*}
\eta_K^n \eq \|\BA^{-1} \bsig_h^n+\grad u_h^n\|_{\BA,K},
\qquad
\eta^n \eq \left (\sum_{K \in \CT_h} (\eta_K^n)^2\right )^{1/2},
\end{equation*}
as well as
\begin{equation}
\label{eq_Lambda_partial}
\Lambda_\rho^2 \eq \frac{1}{2\dt^2} \sum_{n=0}^{N}
\left (
(\eta^{n+1})^2 e^{-2\rho t_{n+1}} + (\eta^n)^2 e^{-2\rho t_n}
\right ).
\end{equation}
Notice that the compatibility condition in \eqref{eq_bad_minimization} is satisfied
due to \eqref{eq_fully_discrete}.

\subsection{Standing wave}

We consider the unit square $\Omega = (0,1)^2$ surrounded by a Dirichlet
boundary condition with $\GD = \partial \Omega$. We consider the solution
\begin{equation*}
u(t,\bx) = \chi(t) \sin(\sqrt{2}\pi t)\sin(\pi \bx_1)\sin(\pi\bx_2),
\end{equation*}
where $\chi$ is a cutoff function defined as follows. For $t \leq 0$,
$\chi(t) = 0$ and $\chi(t) = 1$ when $t \geq 1$. In the interval $[0,1]$
$\chi$ is defined as the unique $\CP_5$ polynomial that enables $C^2$
junctions. Observe that for $t \geq 1$, $u$ solves the wave equation with
a vanishing right-hand side $f$. We perform the simulation on the interval
$(0,T)$ with $T \eq 10$.

The goal of this example is to emphasize the role of the damping parameter
$\rho > 0$. First, Figure \ref{figure_standing_history} presents the evolution
of the instantaneous error (recall that $\GA = \emptyset$ here)
\begin{equation*}
\CE^2(t) \eq \|\dot \xi(t)\|_{\mu,\Omega}^2 + \|\grad \xi(t)\|_{\BA,\Omega}^2,
\end{equation*}
and the ``instantaneous'' estimator $\eta(t)$, as well as the cumulated error
\begin{equation*}
\CC_\rho^2(t) \eq \int_0^t \CE^2(\tau) e^{-2\rho \tau} d\tau,
\end{equation*}
and the cumulated estimator $\Lambda_\rho(t)$ obtained by summing \eqref{eq_Lambda_partial}
up to the current time step. On Figure \ref{figure_standing_history_inst},
we can see that $\CE(t)$ globally increases with $t$, whereas $\eta(t)$ remains globally
constant. The reason is that, similar to what happens in the frequency domain
\cite{chaumontfrelet_ern_vohralik_2021a}, the estimator is insensitive to the dispersion
error that accumulates over time. The effect is counter balanced by the parameter $\rho$
in the damped energy norm. As shown on Figure \ref{figure_standing_history_cumu_1.0000},
\ref{figure_standing_history_cumu_0.5000} and \ref{figure_standing_history_cumu_0.2500},
for a fixed mesh, the estimator tends to underestimate the error as $\rho$ is decreased.

Figure \ref{figure_standing_convergence} presents the behaviour of the estimator
$\Lambda_\rho$ and the error as the mesh is refined. As claimed, our estimate is
asymptotically constant-free: the two curves becomes indintinguishable as $h \to 0$.
The effect is analyzed in more depth on Figure \ref{figure_standing_efficiency},
where the effictivity index ${\rm eff}_\rho \eq \Lambda_\rho/\|u-u_h\|_{\rho}$
is plotted against the mesh size for different values of $\rho$, $\alpha$ and $p$.
We can see on Figures \ref{figure_standing_efficiency_P0} and \ref{figure_standing_efficiency_P1}
that for $p=1$ and $p=2$ (with an over-refined time step), we observe exactly the behaviour
predicted by our analysis: the estimate is asymtotically constant-free, and the asymptotic
regime is achieved faster when $\rho$ and $p$ are larger. On
Figure \ref{figure_standing_efficiency_P1_cfl}, we display the same results for $p=2$
and a ``natural'' time step close to the CFL stability limit. We can see in this case
that the time-discretization error have a small contribution that is not capture by
the estimator.

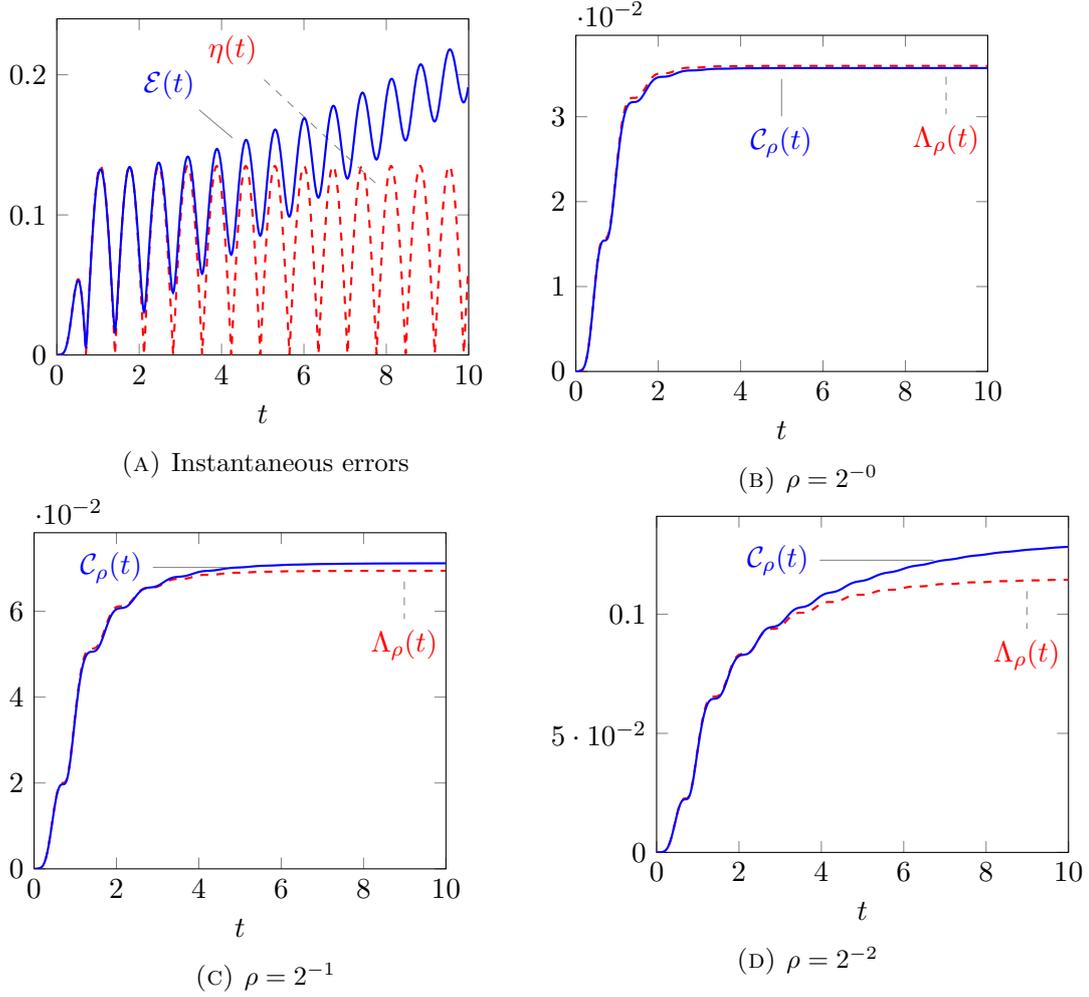
\begin{figure}

\begin{minipage}{.45\linewidth}
\begin{tikzpicture}
\begin{axis}%
[
	width  = \linewidth,
	xlabel = $t$,
	xmin=0,
	xmax=10,
	ymin=0
]

\plot[thick,red,dashed] table[x=t,y=est] {figures/standing/data/inst.txt}
node[pos=.80,pin={[pin distance=2cm]135:{$\eta(t)$}}] {};
\plot[thick,blue,solid] table[x=t,y=err] {figures/standing/data/inst.txt}
node[pos=.46,pin=135:{$\CE(t)$}] {};

\end{axis}
\end{tikzpicture}
\subcaption{Instantaneous errors}
\label{figure_standing_history_inst}
\end{minipage}
\begin{minipage}{.45\linewidth}
\begin{tikzpicture}
\begin{axis}%
[
	width  = \linewidth,
	xlabel = {$t$},
	xmin=0,
	xmax=10,
	ymin=0
]

\plot[thick,red,dashed] table[x=t,y=est] {figures/standing/data/cumu_1.0000.txt}
node[pos=.9,pin=-90:{$\Lambda_\rho(t)$}] {};
\plot[thick,blue,solid] table[x=t,y=err] {figures/standing/data/cumu_1.0000.txt}
node[pos=.5,pin={-90:{$\CC_\rho(t)$}}] {};

\end{axis}
\end{tikzpicture}
\subcaption{$\rho = 2^{-0}$}
\label{figure_standing_history_cumu_1.0000}
\end{minipage}

\begin{minipage}{.45\linewidth}
\begin{tikzpicture}
\begin{axis}%
[
	width  = \linewidth,
	xlabel = $t$,
	xmin=0,
	xmax=10,
	ymin=0
]

\plot[thick,red,dashed] table[x=t,y=est] {figures/standing/data/cumu_0.5000.txt}
node[pos=.9,pin=-90:{$\Lambda_\rho(t)$}] {};
\plot[thick,blue,solid] table[x=t,y=err] {figures/standing/data/cumu_0.5000.txt}
node[pos=.5,pin={[pin distance=1cm]180:{$\CC_\rho(t)$}}] {};

\end{axis}
\end{tikzpicture}
\subcaption{$\rho = 2^{-1}$}
\label{figure_standing_history_cumu_0.5000}
\end{minipage}
\begin{minipage}{.45\linewidth}
\begin{tikzpicture}
\begin{axis}%
[
	width  = \linewidth,
	xlabel = {$t$},
	xmin=0,
	xmax=10,
	ymin=0
]

\plot[thick,red,dashed] table[x=t,y=est] {figures/standing/data/cumu_0.2500.txt}
node[pos=.9,pin=-90:{$\Lambda_\rho(t)$}] {};
\plot[thick,blue,solid] table[x=t,y=err] {figures/standing/data/cumu_0.2500.txt}
node[pos=.7,pin={[pin distance=1.5cm]180:{$\CC_\rho(t)$}}] {};

\end{axis}
\end{tikzpicture}
\subcaption{$\rho = 2^{-2}$}
\label{figure_standing_history_cumu_0.2500}
\end{minipage}

\caption{Error evolution in the standing wave example for $h_{\max}=0.05$}
\label{figure_standing_history}
\end{figure}

\begin{figure}

\begin{minipage}{.45\linewidth}
\begin{tikzpicture}
\begin{axis}%
[
	width  = .95\linewidth,
	xlabel = $N_{\rm dofs}^{1/2}$,
	ylabel = {Errors},
	xmode  = log,
	ymode  = log
]

\plot table[x expr=sqrt(\thisrow{nr_dofs}),y=est] {figures/standing/data/P0_R0.5000.txt}
node[pos=.1,pin=-90:{$\Lambda_\rho$}] {};
\plot table[x expr=sqrt(\thisrow{nr_dofs}),y=err] {figures/standing/data/P0_R0.5000.txt}
node[pos=.1,pin=0:{$\enorm{u-u_h}_\rho$}] {};

\plot[black,solid,mark=none,domain=10:90] {x^(-1)};
\SlopeTriangle{.50}{-.15}{.25}{-1}{$N_{\rm dofs}^{-1/2}$}{}

\end{axis}
\end{tikzpicture}
\subcaption{$p=1$, $\alpha=1.5$}
\label{figure_standing_convergence_P0}
\end{minipage}
\begin{minipage}{.45\linewidth}
\begin{tikzpicture}
\begin{axis}%
[
	width  = .95\linewidth,
	xlabel = $N_{\rm dofs}^{1/2}$,
	xmode  = log,
	ymode  = log
]

\plot table[x expr=sqrt(\thisrow{nr_dofs}),y=est] {figures/standing/data/P1_R0.5000.txt}
node[pos=.1,pin=-90:{$\Lambda_\rho$}] {};
\plot table[x expr=sqrt(\thisrow{nr_dofs}),y=err] {figures/standing/data/P1_R0.5000.txt}
node[pos=.1,pin=0:{$\enorm{u-u_h}_\rho$}] {};

\plot[black,solid,mark=none,domain=30:200] {2*x^(-2)};
\SlopeTriangle{.50}{-.15}{.25}{-2}{$N_{\rm dofs}^{-1}$}{}

\end{axis}
\end{tikzpicture}
\subcaption{$p=2$, $\alpha=0.1$}
\label{figure_standing_convergence_P1}
\end{minipage}

\caption{Convergence in the standing wave example for $\rho=0.5$}
\label{figure_standing_convergence}
\end{figure}
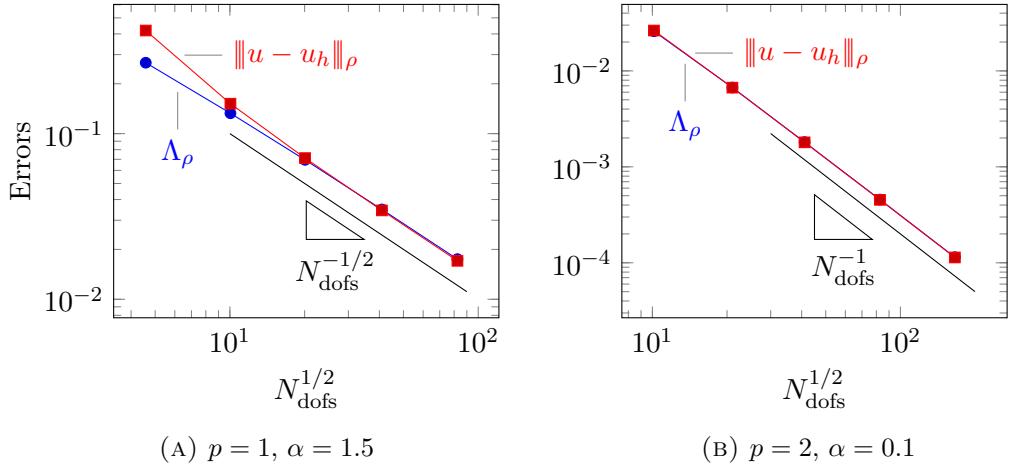

\begin{figure}

\begin{minipage}{.45\linewidth}
\begin{tikzpicture}
\begin{axis}%
[
	width  = .95\linewidth,
	xlabel = $N_{\rm dofs}^{1/2}$,
	ylabel = {Effictivity index},
	xmode  = log,
	ymin   = 0.35,
	ymax   = 1.10
]

\plot table[x expr=sqrt(\thisrow{nr_dofs}),y expr=\thisrow{est}/\thisrow{err}]
{figures/standing/data/P0_R1.0000.txt}
node[pos=.1,pin=90:{$\rho = 1$}] {};

\plot table[x expr=sqrt(\thisrow{nr_dofs}),y expr=\thisrow{est}/\thisrow{err}]
{figures/standing/data/P0_R0.5000.txt}
node[pos=.15,pin={[pin distance=2cm]0:{$\rho = 0.5$}}] {};

\plot table[x expr=sqrt(\thisrow{nr_dofs}),y expr=\thisrow{est}/\thisrow{err}]
{figures/standing/data/P0_R0.2500.txt}
node[pos=.1,pin=0:{$\rho = 0.25$}] {};

\end{axis}
\end{tikzpicture}
\subcaption{$p=1$, $\alpha=1.5$}
\label{figure_standing_efficiency_P0}
\end{minipage}
\begin{minipage}{.45\linewidth}
\begin{tikzpicture}
\begin{axis}%
[
	width  = .95\linewidth,
	xlabel = $N_{\rm dofs}^{1/2}$,
	xmode  = log,
	ymin   = 0.35,
	ymax   = 1.10
]

\plot table[x expr=sqrt(\thisrow{nr_dofs}),y expr=\thisrow{est}/\thisrow{err}]
{figures/standing/data/P1_R1.0000.txt}
node[pos=.1,pin={[pin distance=.1cm]90:{$\rho = 1$}}] {};

\plot table[x expr=sqrt(\thisrow{nr_dofs}),y expr=\thisrow{est}/\thisrow{err}]
{figures/standing/data/P1_R0.5000.txt}
node[pos=.1,pin={[pin distance=2cm]-90:{$\rho = 0.5$}}] {};

\plot table[x expr=sqrt(\thisrow{nr_dofs}),y expr=\thisrow{est}/\thisrow{err}]
{figures/standing/data/P1_R0.2500.txt}
node[pos=.15,pin=-45:{$\rho = 0.25$}] {};

\end{axis}
\end{tikzpicture}
\subcaption{$p=2$, $\alpha=0.1$}
\label{figure_standing_efficiency_P1}
\end{minipage}

\begin{minipage}{.45\linewidth}
\begin{tikzpicture}
\begin{axis}%
[
	width  = .95\linewidth,
	xlabel = $N_{\rm dofs}^{1/2}$,
	ylabel = {Effictivity index},
	xmode  = log,
	ymin   = 0.35,
	ymax   = 1.10
]

\plot table[x expr=sqrt(\thisrow{nr_dofs}),y expr=\thisrow{est}/\thisrow{err}]
{figures/standing/data/P1_cfl_R1.0000.txt}
node[pos=.1,pin={[pin distance=.1cm]90:{$\rho = 1$}}] {};

\plot table[x expr=sqrt(\thisrow{nr_dofs}),y expr=\thisrow{est}/\thisrow{err}]
{figures/standing/data/P1_cfl_R0.5000.txt}
node[pos=.1,pin={[pin distance=2cm]-90:{$\rho = 0.5$}}] {};

\plot table[x expr=sqrt(\thisrow{nr_dofs}),y expr=\thisrow{est}/\thisrow{err}]
{figures/standing/data/P1_cfl_R0.2500.txt}
node[pos=.15,pin=-45:{$\rho = 0.25$}] {};

\end{axis}
\end{tikzpicture}
\subcaption{$p=2$, $\alpha=0.6$}
\label{figure_standing_efficiency_P1_cfl}
\end{minipage}
\begin{minipage}{.45\linewidth}
$ $
\end{minipage}

\caption{Effectivity index in the standing wave example}
\label{figure_standing_efficiency}
\end{figure}
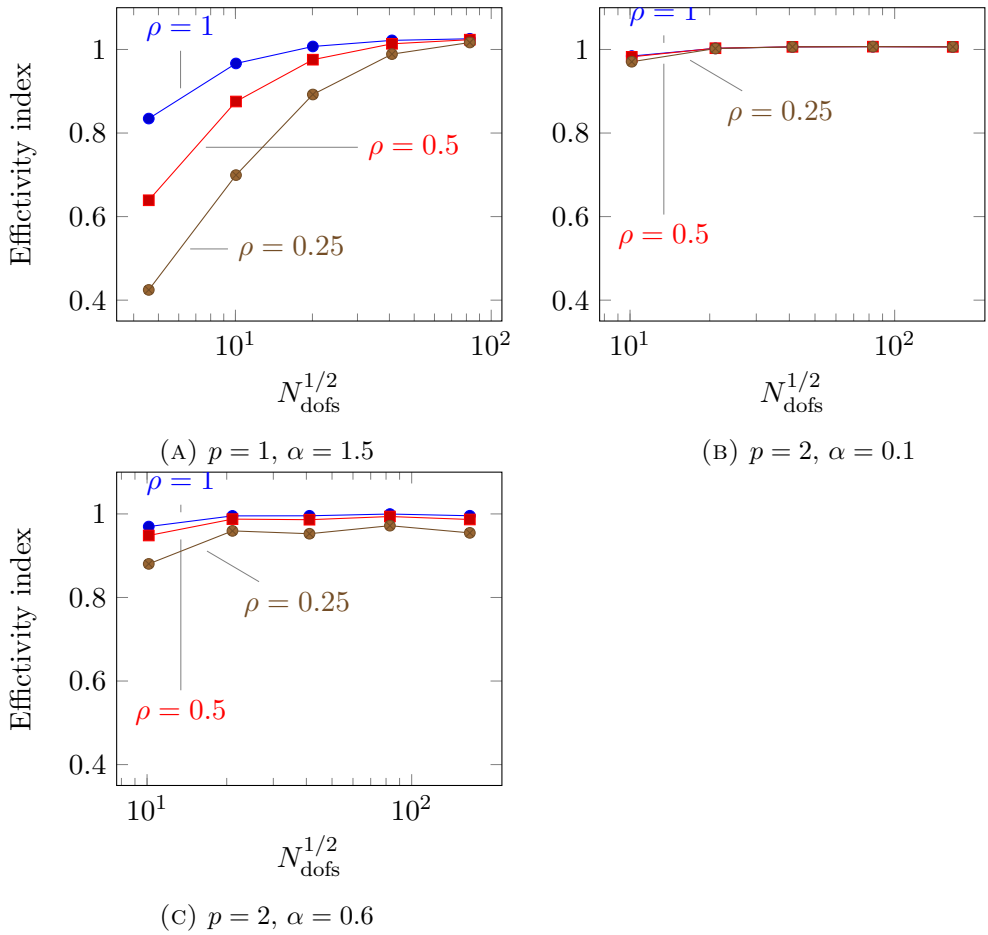

\subsection{Reflections by Dirichlet boundaries}

In this example, the domain is the unit square $\Omega \eq (0,1)^2$.
On the ``bottom-left'' boundary of $\Omega$, we impose a Dirichlet
boundary condition, whereas on the ``top-right'' part, we place an
absorbing boundary condition.

Let us first consider the travelling wave
\begin{equation*}
v_{\sigma,\bd}(t,\bx) = p_{\sigma}((t-t_0)-\bd \cdot \bx),
\end{equation*}
where $t_0 = 4$, $\bd = (\cos\theta,\sin\theta)$ with
$\theta = 11\pi/8$, and $p_\sigma$ is the profile
\begin{equation*}
p_\sigma(\tau) = \tau e^{-(\tau/\sigma)^2} \qquad \tau \in \mathbb R.
\end{equation*}

We then construct our analytical solution by the principle of images,
that is by ``mirroring'' $v_{\sigma,\bd}$ along the $\{\bx_1 = 0\}$
and $\{\bx_2 = 0\}$ hyperplanes. Specifically, we set
\begin{equation*}
u(t,\bx)
=
v_{\sigma,\bd}(t,\bx_1,\bx_2)
-
v_{\sigma,\bx}(t,-\bx_1,\bx_2)
-
v_{\sigma,\bx}(t,\bx_1,-\bx_2)
+
v_{\sigma,\bd}(t,-\bx_1,-\bx_2).
\end{equation*}
It is easily seen that $u$ satisfies the volume equation
with $f = 0$ in $\Omega$ and the Dirichlet condition on $\GD$.
We then set $g = \dot u + \grad u \cdot \bn$ on $\GA$. The
initial condition $u(0,\bx)$ and $\dot u(0,\bx)$ are not exactly
zero, but are sufficiently small that setting $u^0 = u^1 = 0$ in
our computations generates a level of error less than numerical
discretization. The simulation time is $T \eq 10$, and we set
$\rho \eq 1/T$.

Here, our goal is to highlight the sensitivity of the estimator
to the oscillations present in the right-hand side, that are here
described by the parameter $\sigma$. Figure \ref{figure_reflection_history}
represents the evoluation of the instantaneous and cumulated errors
\begin{equation*}
\CE_\rho^2(t)
\eq
\|\dot \xi_h(t)\|_{\mu,\Omega}^2
+
\frac{1}{\rho} \|\dot u(t)\|_{\gamma,\GA}^2
+
\|\grad \xi_h(t)\|_{\BA,\Omega}^2,
\qquad
\CC_\rho^2(t)
\eq
\int_0^t \CE_\rho^2(\tau) e^{-2\rho \tau} d\tau,
\end{equation*}
and estimators, whereas Figure \ref{figure_reflection_convergence} presents the behaviour
of the estimator for $\sigma = 0.5$.

Figure \ref{figure_reflection_efficiency} displays the effictivity index for different mesh
sizes and values of $\sigma$ and $p$, $\alpha$. As predicted by our analysis, if the
time-step is sufficiently fine (Figures \ref{figure_reflection_efficiency_P0}
and \ref{figure_reflection_efficiency_P1}), the estimate becomes asymptotically constant-free
as the mesh is refined for any value of $\sigma$, and the asymptotic regime is
achieved faster if $p$ is large or if $\sigma$ is small (i.e. the data oscillate less).
When considering $p=2$ with a large time-step (Figure \ref{figure_reflection_efficiency_P1_cfl}),
the contribution of the time-discretization, not included in our analsysis, is clearly
visible.

On Figure \ref{figure_reflection_pictures}, we represent the true solution, the instantaneous
error $\CE_\rho(t)$, and $\eta(t)$ for different times $t$. We see that the estimators correctly
locates the error at all times, although there is an underetimation close the abosrbing boundary.
This is due to the spatial oscilation term that we have not included in the estimator
(the right-hand side is not piecewise polynomial here). We also see that as the time increases,
even if the error is correclty located, its magnitude becomes underestimated, which is in
agreement with previous numerical observations and theoretical predictions. Again, this is
because the estimator does not capture the (increasing) dispersion error.

\begin{figure}

\begin{minipage}{.45\linewidth}
\begin{tikzpicture}
\begin{axis}%
[
	width  = \linewidth,
	xlabel = $t$,
	xmin=0,
	xmax=10,
	ymin=0
]

\plot[thick,red,dashed] table[x=t,y=est] {figures/reflection/data/inst.txt}
node[pos=.525,pin=45:{$\eta(t)$}] {};
\plot[thick,blue,solid] table[x=t,y=err] {figures/reflection/data/inst.txt}
node[pos=.5,pin=45:{$\CE_\rho(t)$}] {};

\end{axis}
\end{tikzpicture}
\subcaption{Instantaneous errors}
\label{figure_reflection_history_inst}
\end{minipage}
\begin{minipage}{.45\linewidth}
\begin{tikzpicture}
\begin{axis}%
[
	width  = \linewidth,
	xlabel = {$t$},
	xmin=0,
	xmax=10,
	ymin=0
]

\plot[thick,red,dashed] table[x=t,y=est] {figures/reflection/data/cumu.txt}
node[pos=.8,pin=-90:{$\Lambda_\rho(t)$}] {};
\plot[thick,blue,solid] table[x=t,y=err] {figures/reflection/data/cumu.txt}
node[pos=.5,pin={180:{$\CC_\rho(t)$}}] {};

\end{axis}
\end{tikzpicture}
\subcaption{Cumulated errors}
\label{figure_reflection_history_1.0000}
\end{minipage}

\caption{Error evolution in the reflection wave example for $\sigma=0.5$, $p=2$, $\alpha=0.6$ and $h_{\max}=0.05$}
\label{figure_reflection_history}
\end{figure}
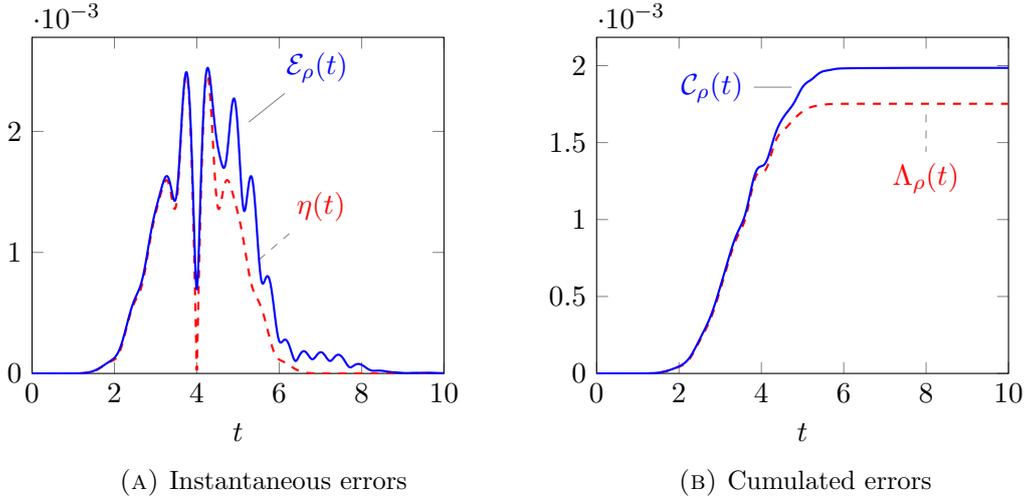

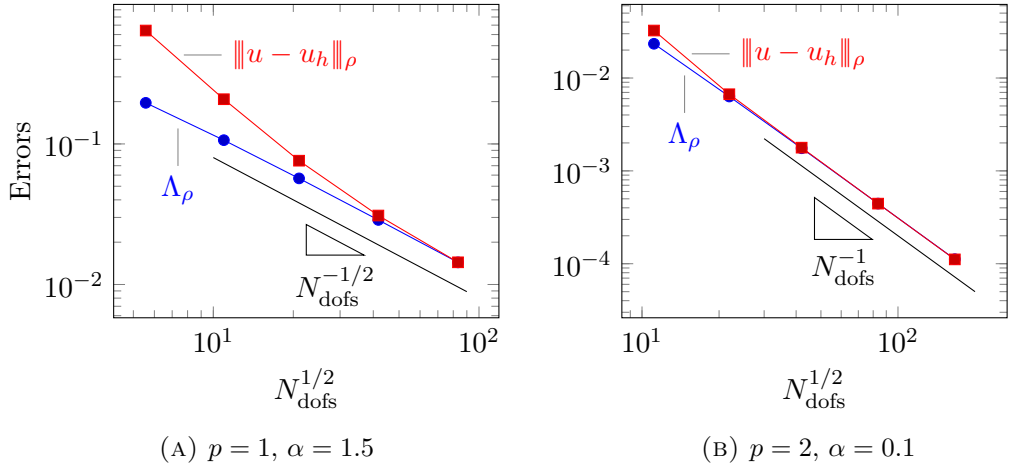
\begin{figure}

\begin{minipage}{.45\linewidth}
\begin{tikzpicture}
\begin{axis}%
[
	width  = .95\linewidth,
	xlabel = $N_{\rm dofs}^{1/2}$,
	ylabel = {Errors},
	xmode  = log,
	ymode  = log
]

\plot table[x expr=sqrt(\thisrow{nr_dofs}),y=est] {figures/reflection/data/P0_S0.5000.txt}
node[pos=.1,pin=-90:{$\Lambda_\rho$}] {};
\plot table[x expr=sqrt(\thisrow{nr_dofs}),y=err] {figures/reflection/data/P0_S0.5000.txt}
node[pos=.1,pin=0:{$\enorm{u-u_h}_\rho$}] {};

\plot[black,solid,mark=none,domain=10:90] {.8*x^(-1)};
\SlopeTriangle{.50}{-.15}{.20}{-1}{$N_{\rm dofs}^{-1/2}$}{}

\end{axis}
\end{tikzpicture}
\subcaption{$p=1$, $\alpha=1.5$}
\label{figure_reflection_convergence_P0}
\end{minipage}
\begin{minipage}{.45\linewidth}
\begin{tikzpicture}
\begin{axis}%
[
	width  = .95\linewidth,
	xlabel = $N_{\rm dofs}^{1/2}$,
	xmode  = log,
	ymode  = log
]

\plot table[x expr=sqrt(\thisrow{nr_dofs}),y=est] {figures/reflection/data/P1_S0.5000.txt}
node[pos=.1,pin=-90:{$\Lambda_\rho$}] {};
\plot table[x expr=sqrt(\thisrow{nr_dofs}),y=err] {figures/reflection/data/P1_S0.5000.txt}
node[pos=.1,pin=0:{$\enorm{u-u_h}_\rho$}] {};

\plot[black,solid,mark=none,domain=30:200] {2*x^(-2)};
\SlopeTriangle{.50}{-.15}{.25}{-2}{$N_{\rm dofs}^{-1}$}{}

\end{axis}
\end{tikzpicture}
\subcaption{$p=2$, $\alpha=0.1$}
\label{figure_reflection_convergence_P1}
\end{minipage}

\caption{Convergence in the reflection wave example for $\sigma=0.5$}
\label{figure_reflection_convergence}
\end{figure}

\begin{figure}

\begin{minipage}{.45\linewidth}
\begin{tikzpicture}
\begin{axis}%
[
	width  = .95\linewidth,
	xlabel = $N_{\rm dofs}^{1/2}$,
	ylabel = {Effictivity index},
	xmode  = log
]

\plot table[x expr=sqrt(\thisrow{nr_dofs},y expr=\thisrow{est}/\thisrow{err}]
{figures/reflection/data/P0_S0.5000.txt}
node[pos=.6,pin=135:{$\sigma = 2^{-1}$}] {};

\plot table[x expr=sqrt(\thisrow{nr_dofs},y expr=\thisrow{est}/\thisrow{err}]
{figures/reflection/data/P0_S0.2500.txt}
node[pos=.1,pin={[pin distance=2cm]90:{$\sigma = 2^{-2}$}}] {};

\plot table[x expr=sqrt(\thisrow{nr_dofs},y expr=\thisrow{est}/\thisrow{err}]
{figures/reflection/data/P0_S0.1250.txt}
node[pos=.9,pin=90:{$\sigma = 2^{-3}$}] {};

\end{axis}
\end{tikzpicture}
\subcaption{$p=1$, $\alpha=1.5$}
\label{figure_reflection_efficiency_P0}
\end{minipage}
\begin{minipage}{.45\linewidth}
\begin{tikzpicture}
\begin{axis}%
[
	width  = .95\linewidth,
	xlabel = $N_{\rm dofs}^{1/2}$,
	xmode  = log
]

\plot table[x expr=sqrt(\thisrow{nr_dofs},y expr=\thisrow{est}/\thisrow{err}]
{figures/reflection/data/P1_S0.5000.txt}
node[pos=.1,pin=90:{$\sigma = 2^{-1}$}] {};

\plot table[x expr=sqrt(\thisrow{nr_dofs},y expr=\thisrow{est}/\thisrow{err}]
{figures/reflection/data/P1_S0.2500.txt}
node[pos=.1,pin=90:{$\sigma = 2^{-2}$}] {};

\plot table[x expr=sqrt(\thisrow{nr_dofs},y expr=\thisrow{est}/\thisrow{err}]
{figures/reflection/data/P1_S0.1250.txt}
node[pos=.9,pin=-90:{$\sigma = 2^{-3}$}] {};

\end{axis}
\end{tikzpicture}
\subcaption{$p=2$, $\alpha=0.1$}
\label{figure_reflection_efficiency_P1}
\end{minipage}

\begin{minipage}{.45\linewidth}
\begin{tikzpicture}
\begin{axis}%
[
	width  = .95\linewidth,
	ylabel = {Effictivity index},
	xlabel = $N_{\rm dofs}^{1/2}$,
	xmode  = log
]

\plot table[x expr=sqrt(\thisrow{nr_dofs},y expr=\thisrow{est}/\thisrow{err}]
{figures/reflection/data/P1_cfl_S0.5000.txt}
node[pos=.1,pin=90:{$\sigma = 2^{-1}$}] {};

\plot table[x expr=sqrt(\thisrow{nr_dofs},y expr=\thisrow{est}/\thisrow{err}]
{figures/reflection/data/P1_cfl_S0.2500.txt}
node[pos=.1,pin=90:{$\sigma = 2^{-2}$}] {};

\plot table[x expr=sqrt(\thisrow{nr_dofs},y expr=\thisrow{est}/\thisrow{err}]
{figures/reflection/data/P1_cfl_S0.1250.txt}
node[pos=.9,pin=-90:{$\sigma = 2^{-3}$}] {};

\end{axis}
\end{tikzpicture}
\subcaption{$p=2$, $\alpha=0.6$}
\label{figure_reflection_efficiency_P1_cfl}
\end{minipage}
\begin{minipage}{.45\linewidth}
$ $
\end{minipage}

\caption{Effectivity index in the reflection example}
\label{figure_reflection_efficiency}
\end{figure}
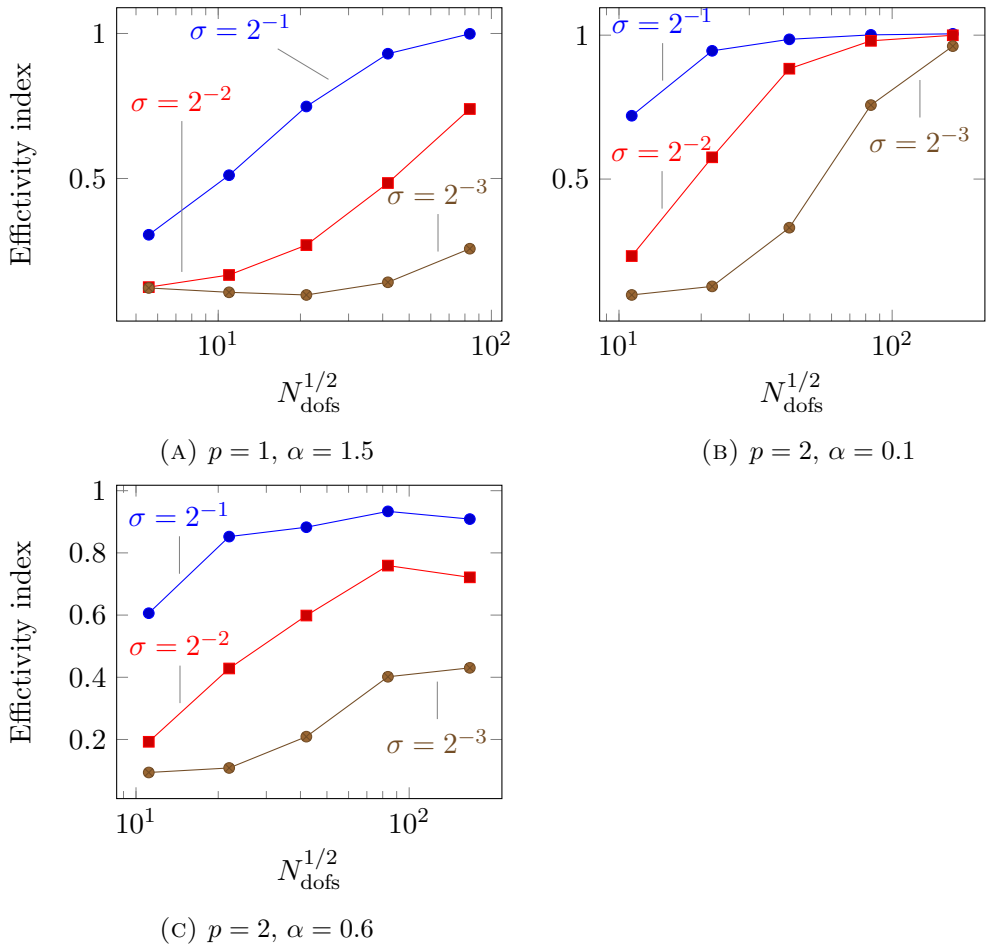

\begin{figure}

\begin{minipage}{.02\linewidth}
\begin{turn}{90}
Solution
\end{turn}
\end{minipage}
\begin{minipage}{.30\linewidth}
\begin{tikzpicture}
\draw (0,0) node[anchor=south west,inner sep=0]%
{\includegraphics[width=4.0cm]{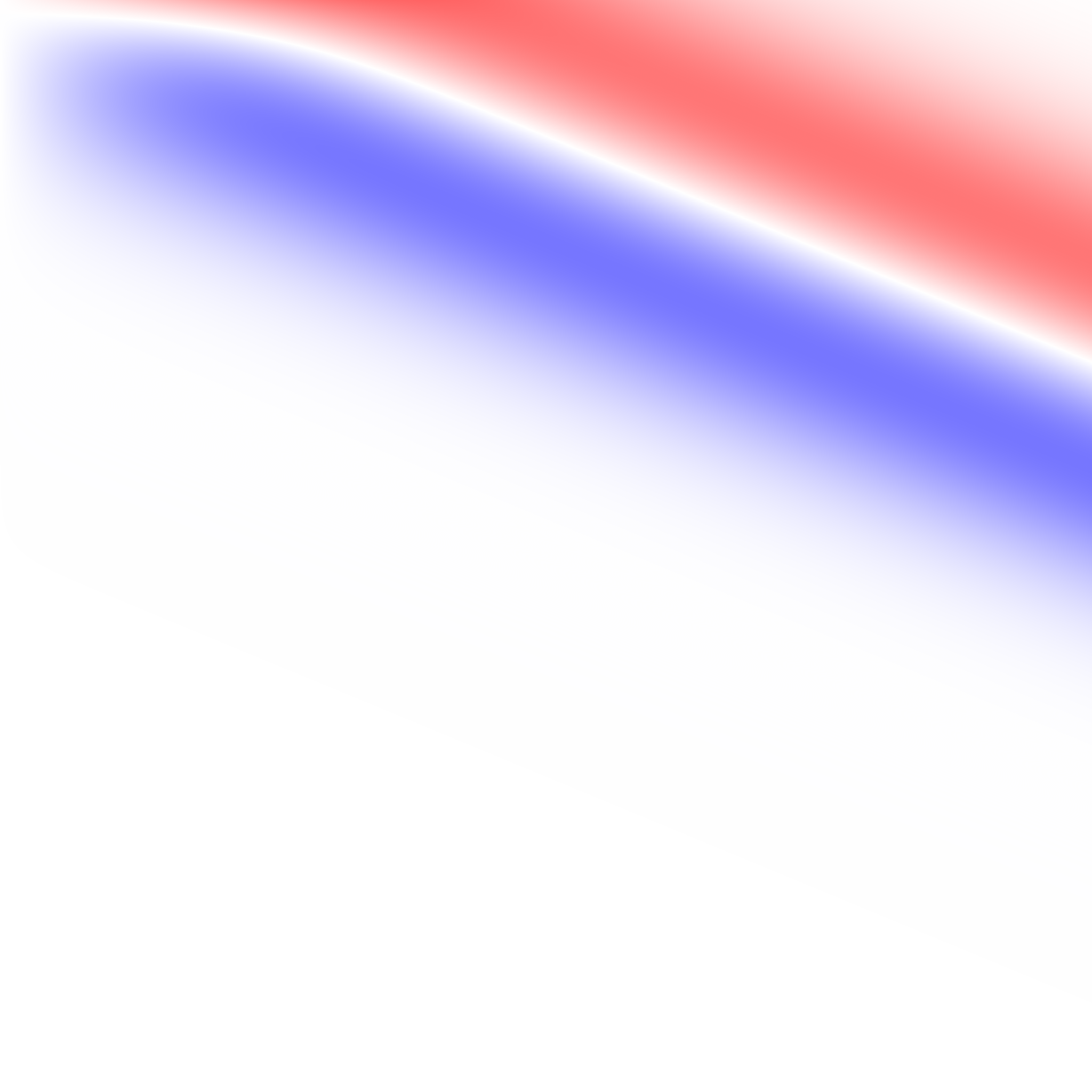}};
\draw[thick]        (4.0,0.0) -- (0.0,0.0) -- (0.0,4.0);
\draw[thick,dashed] (4.0,0.0) -- (4.0,4.0) -- (0.0,4.0);
\end{tikzpicture}
\end{minipage}
\begin{minipage}{.30\linewidth}
\begin{tikzpicture}
\draw (0,0) node[anchor=south west,inner sep=0]%
{\includegraphics[width=4.0cm]{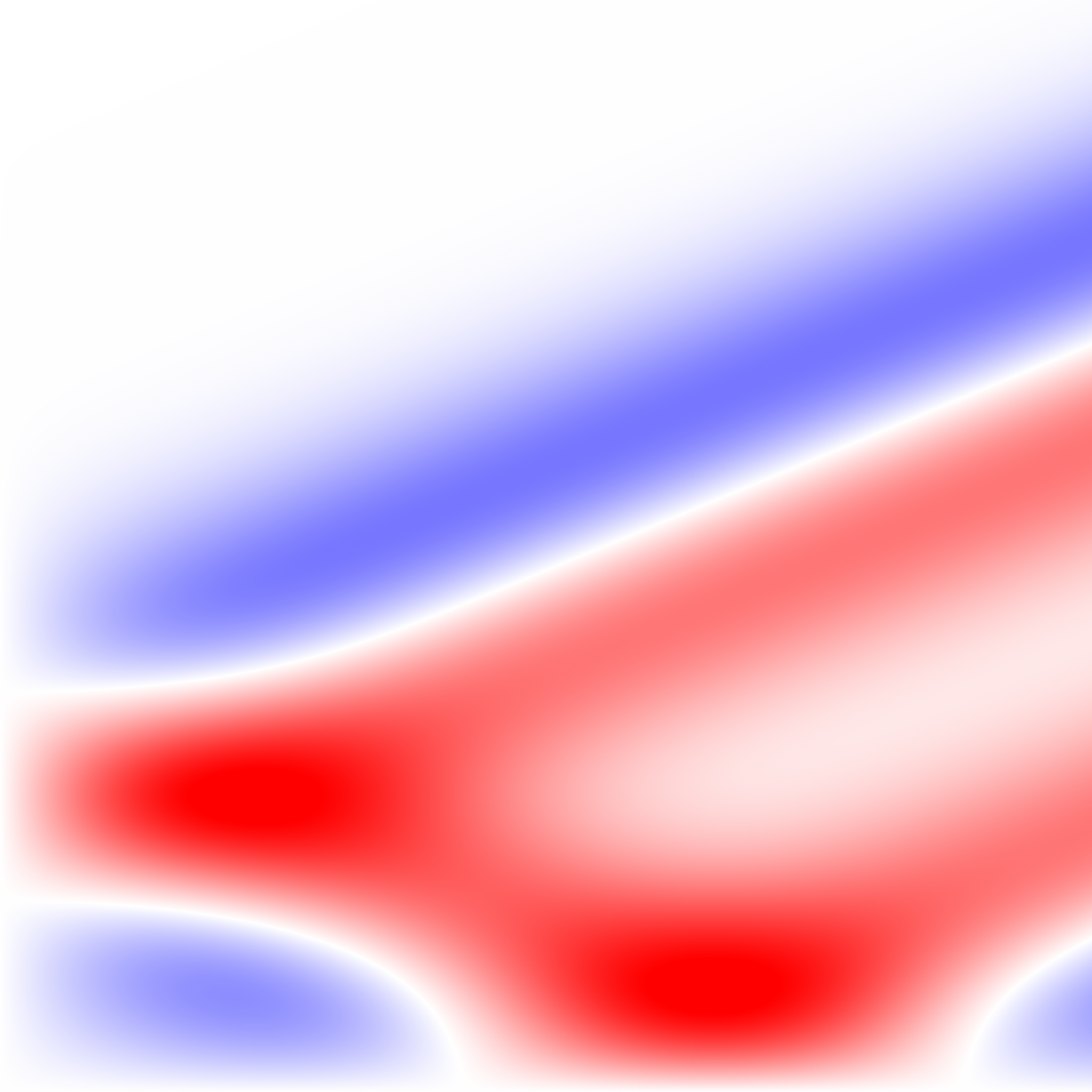}};
\draw[thick]        (4.0,0.0) -- (0.0,0.0) -- (0.0,4.0);
\draw[thick,dashed] (4.0,0.0) -- (4.0,4.0) -- (0.0,4.0);
\end{tikzpicture}
\end{minipage}
\begin{minipage}{.30\linewidth}
\begin{tikzpicture}
\draw (0,0) node[anchor=south west,inner sep=0]%
{\includegraphics[width=4.0cm]{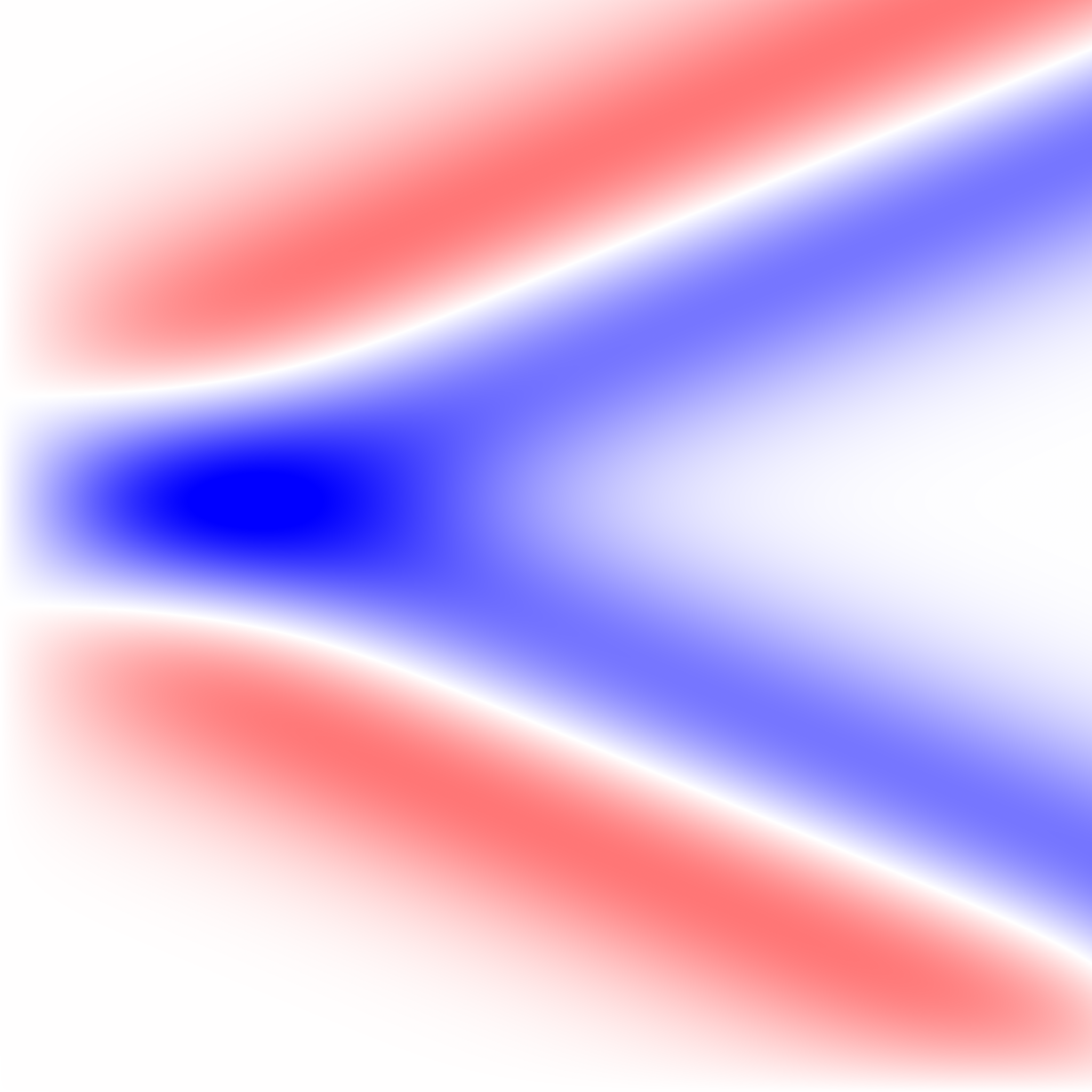}};
\draw[thick]        (4.0,0.0) -- (0.0,0.0) -- (0.0,4.0);
\draw[thick,dashed] (4.0,0.0) -- (4.0,4.0) -- (0.0,4.0);
\end{tikzpicture}
\end{minipage}
\begin{minipage}{.05\linewidth}
\begin{tikzpicture}
\draw (0,0) node[anchor=south west,inner sep=0]%
{\includegraphics[width=0.5cm,height=4.0cm]{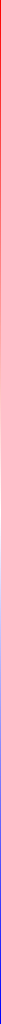}};
\draw[thick] (0,0) rectangle (.5,4.0);

\draw (0.5,0.00) node [anchor=west] {$-0.1$};
\draw (0.5,2.00) node [anchor=west] {${\color{white}+}0.0$};
\draw (0.5,4.00) node [anchor=west] {${\color{white}+}0.1$};
\end{tikzpicture}
\end{minipage}

\begin{minipage}{.02\linewidth}
\begin{turn}{90}
True error
\end{turn}
\end{minipage}
\begin{minipage}{.30\linewidth}
\begin{tikzpicture}
\draw (0,0) node[anchor=south west,inner sep=0]%
{\includegraphics[width=4.0cm]{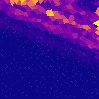}};
\draw[thick] (0,0) rectangle (4.0,4.0);
\end{tikzpicture}
\end{minipage}
\begin{minipage}{.30\linewidth}
\begin{tikzpicture}
\draw (0,0) node[anchor=south west,inner sep=0]%
{\includegraphics[width=4.0cm]{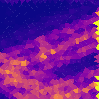}};
\draw[thick] (0,0) rectangle (4.0,4.0);
\end{tikzpicture}
\end{minipage}
\begin{minipage}{.30\linewidth}
\begin{tikzpicture}
\draw (0,0) node[anchor=south west,inner sep=0]%
{\includegraphics[width=4.0cm]{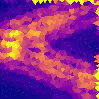}};
\draw[thick] (0,0) rectangle (4.0,4.0);
\end{tikzpicture}
\end{minipage}
\begin{minipage}{.05\linewidth}
\begin{tikzpicture}
\draw (0,0) node[anchor=south west,inner sep=0]%
{\includegraphics[width=0.5cm,height=4.0cm]{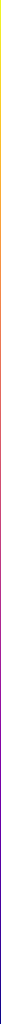}};
\draw[thick] (0,0) rectangle (.5,4.0);

\draw (0.5,0.00) node [anchor=west] {$0.000$};
\draw (0.5,4.00) node [anchor=west] {$0.002$};
\end{tikzpicture}
\end{minipage}

\begin{minipage}{.02\linewidth}
\begin{turn}{90}
Estimated error
\end{turn}

\begin{center}
$ $
\end{center}

\end{minipage}
\begin{minipage}{.30\linewidth}
\begin{tikzpicture}
\draw (0,0) node[anchor=south west,inner sep=0]%
{\includegraphics[width=4.0cm]{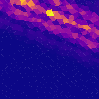}};
\draw[thick] (0,0) rectangle (4.0,4.0);
\end{tikzpicture}

\begin{center}
$t = 3.00$
\end{center}

\end{minipage}
\begin{minipage}{.30\linewidth}
\begin{tikzpicture}
\draw (0,0) node[anchor=south west,inner sep=0]%
{\includegraphics[width=4.0cm]{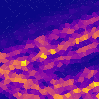}};
\draw[thick] (0,0) rectangle (4.0,4.0);
\end{tikzpicture}

\begin{center}
$t = 3.75$
\end{center}

\end{minipage}
\begin{minipage}{.30\linewidth}
\begin{tikzpicture}
\draw (0,0) node[anchor=south west,inner sep=0]%
{\includegraphics[width=4.0cm]{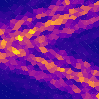}};
\draw[thick] (0,0) rectangle (4.0,4.0);
\end{tikzpicture}

\begin{center}
$t = 4.00$
\end{center}

\end{minipage}
\begin{minipage}{.05\linewidth}
\begin{tikzpicture}
\draw (0,0) node[anchor=south west,inner sep=0]%
{\includegraphics[width=0.5cm,height=4.0cm]{figures/reflection/images/cbar_plasma}};
\draw[thick] (0,0) rectangle (.5,4.0);

\draw (0.5,0.00) node [anchor=west] {$0.000$};
\draw (0.5,4.00) node [anchor=west] {$0.002$};
\end{tikzpicture}

$ $

\end{minipage}

\caption{Reflection example with $\sigma=2^{-3}$ $p=2$, $\alpha=0.6$, and $h=0.05$}
\label{figure_reflection_pictures}
\end{figure}

\subsection{Reflections by a penetrable obstacle}

We consider an example similar to the previous one, where an incident wave
is reflected. As before, we set $\bd = (\cos \theta,\sin \theta)$ with $\theta = 11\pi/8$,
and consider the incident field $u_{\rm inc} \eq v_{\sigma,\bd}$ for different values of
$\sigma$. We then define $g \eq \dot u_{\rm inc} + \grad u_{\rm inc} \cdot \bn$
on $\GA$. The coefficients are define by
\begin{equation*}
\mu \eq \left |
\begin{array}{ll}
\mu_D &\text{ in } D
\\
1 &\text{ outside}
\end{array}
\right .
\qquad
\BA \eq \left |
\begin{array}{ll}
\BA_D &\text{ in } D
\\
\BI &\text{ outside },
\end{array}
\right .
\end{equation*}
with $\mu_D = 2$ and $\BA_D = (1/2)\BI$, where $D$ is the polygon defined
by joining the vertices $(0,-0.5)$, $(0.5,0.5)$, $(0,0)$, $(0,0.5)$ and $(-0.5,0)$
(see Figure \ref{figure_penetrable_pictures}). As before, $T \eq 10$ and $\rho \eq 1/T$.

Here, the analytical solution is not available. As a result we limit our investigation
to $p=1$ with $\alpha=1.5$. For a reference solution, we employ our numerical scheme with
$p=2$ on the same mesh with a time-step $\dt$ divided by 3. As can be seen on Figures
\ref{figure_penetrable_efficiency} and \ref{figure_penetrable_pictures}, the behaviour of the
estimator is similar to the previous experiments, and in complete agreement with our analysis.

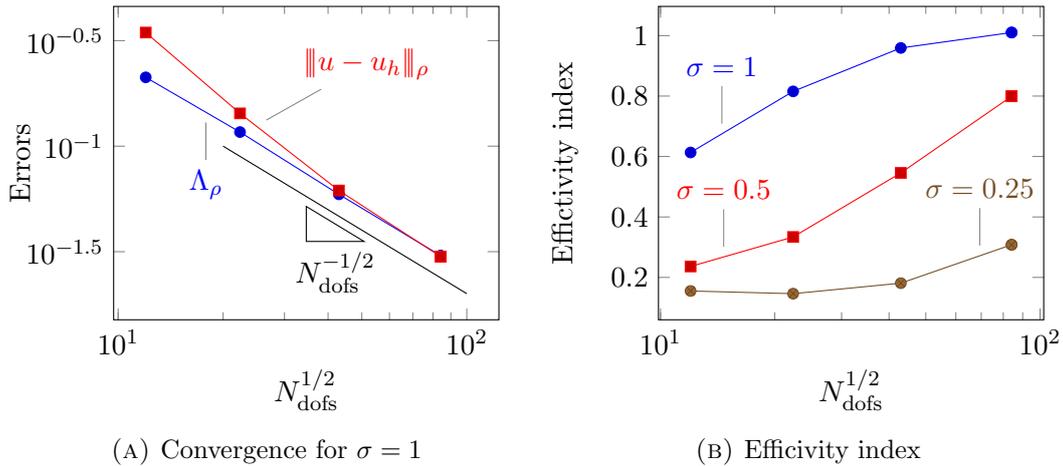
\begin{figure}

\begin{minipage}{.45\linewidth}
\begin{tikzpicture}
\begin{axis}%
[
	width  = .95\linewidth,
	xlabel = $N_{\rm dofs}^{1/2}$,
	ylabel = {Errors},
	xmode  = log,
	ymode  = log
]

\plot table[x expr=sqrt(\thisrow{nr_dofs},y=est]
{figures/penetrable/data/P0_S1.0000.txt}
node[pos=.2,pin=-90:{$\Lambda_\rho$}] {};

\plot table[x expr=sqrt(\thisrow{nr_dofs},y=err]
{figures/penetrable/data/P0_S1.0000.txt}
node[pos=.4,pin=45:{$\enorm{u-u_h}_\rho$}] {};

\plot[black,solid,mark=none,domain=20:100] {2*x^(-1)};
\SlopeTriangle{.50}{-.15}{.25}{-1}{$N_{\rm dofs}^{-1/2}$}{}

\end{axis}
\end{tikzpicture}
\subcaption{Convergence for $\sigma=1$}
\end{minipage}
\begin{minipage}{.45\linewidth}
\begin{tikzpicture}
\begin{axis}%
[
	width  = .95\linewidth,
	xlabel = $N_{\rm dofs}^{1/2}$,
	ylabel = {Effictivity index},
	xmode  = log
]

\plot table[x expr=sqrt(\thisrow{nr_dofs},y expr=\thisrow{est}/\thisrow{err}]
{figures/penetrable/data/P0_S1.0000.txt}
node[pos=.1,pin=90:{$\sigma=1$}] {};

\plot table[x expr=sqrt(\thisrow{nr_dofs},y expr=\thisrow{est}/\thisrow{err}]
{figures/penetrable/data/P0_S0.5000.txt}
node[pos=.1,pin=90:{$\sigma=0.5$}] {};

\plot table[x expr=sqrt(\thisrow{nr_dofs},y expr=\thisrow{est}/\thisrow{err}]
{figures/penetrable/data/P0_S0.2500.txt}
node[pos=.9,pin=90:{$\sigma=0.25$}] {};

\end{axis}
\end{tikzpicture}
\subcaption{Efficivity index}
\end{minipage}

\caption{Errors and effectivity index in the obstacle example}
\label{figure_penetrable_efficiency}
\end{figure}

\begin{figure}

\begin{minipage}{.02\linewidth}
\begin{turn}{90}
Solution
\end{turn}
\end{minipage}
\begin{minipage}{.30\linewidth}
\begin{tikzpicture}
\draw (-2.00,-2.00) node[anchor=south west,inner sep=0]%
{\includegraphics[width=4.0cm]{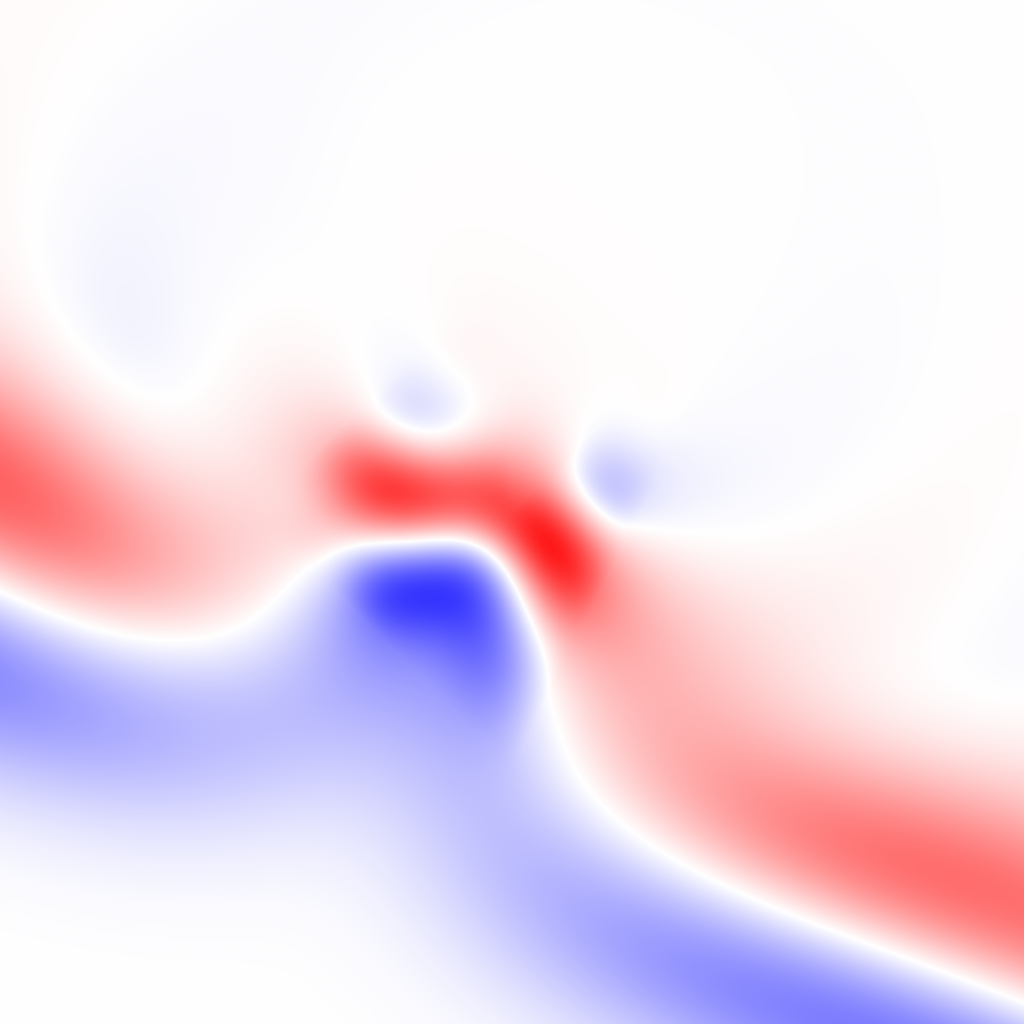}};
\draw[thick,dashed] (-2.00,-2.00) rectangle (2.00,2.00);

\draw[thick] ( 0.00,-1.12) -- ( 1.12, 1.12) -- ( 0.00, 0.00) -- ( 0.00, 1.12) -- (-1.12, 0.00) -- cycle;
\end{tikzpicture}
\end{minipage}
\begin{minipage}{.30\linewidth}
\begin{tikzpicture}
\draw (-2.00,-2.00) node[anchor=south west,inner sep=0]%
{\includegraphics[width=4.0cm]{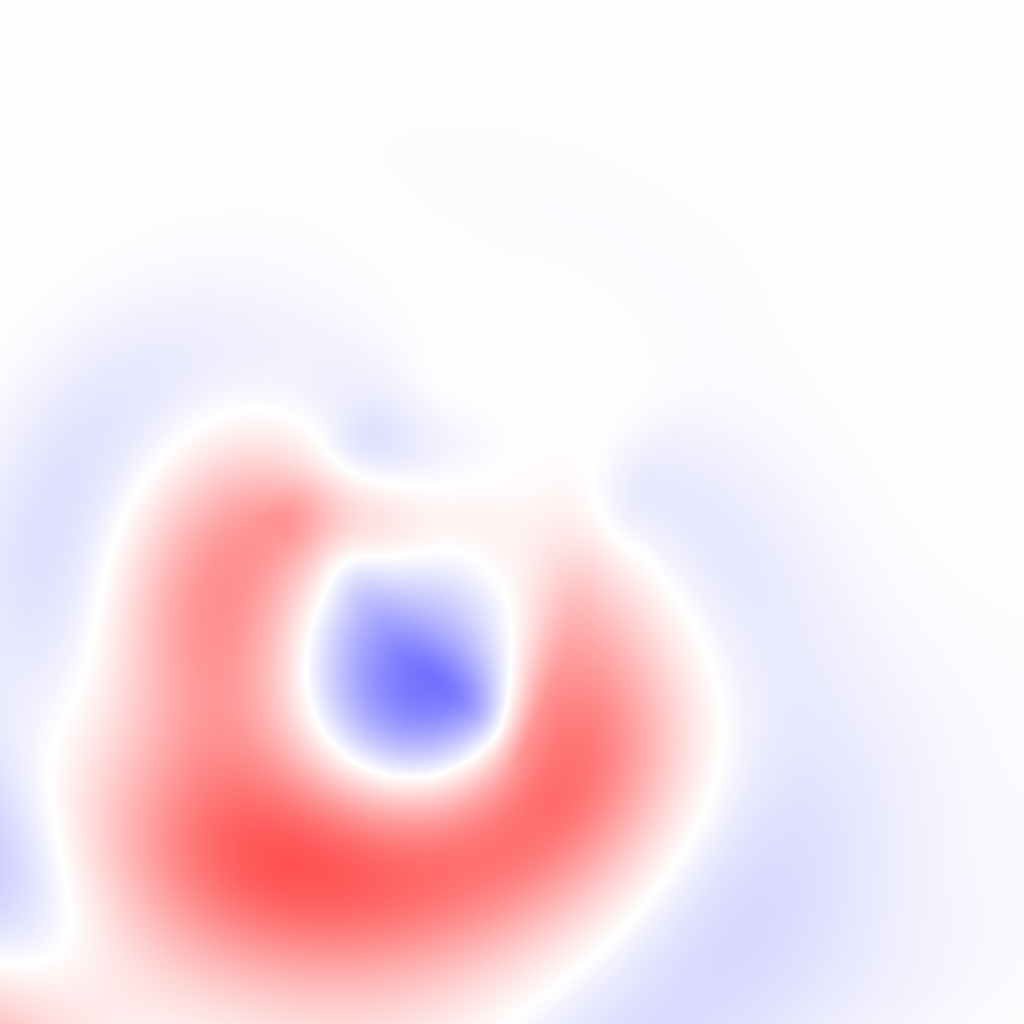}};
\draw[thick,dashed] (-2.00,-2.00) rectangle (2.00,2.00);

\draw[thick] ( 0.00,-1.12) -- ( 1.12, 1.12) -- ( 0.00, 0.00) -- ( 0.00, 1.12) -- (-1.12, 0.00) -- cycle;
\end{tikzpicture}
\end{minipage}
\begin{minipage}{.30\linewidth}
\begin{tikzpicture}
\draw (-2.00,-2.00) node[anchor=south west,inner sep=0]%
{\includegraphics[width=4.0cm]{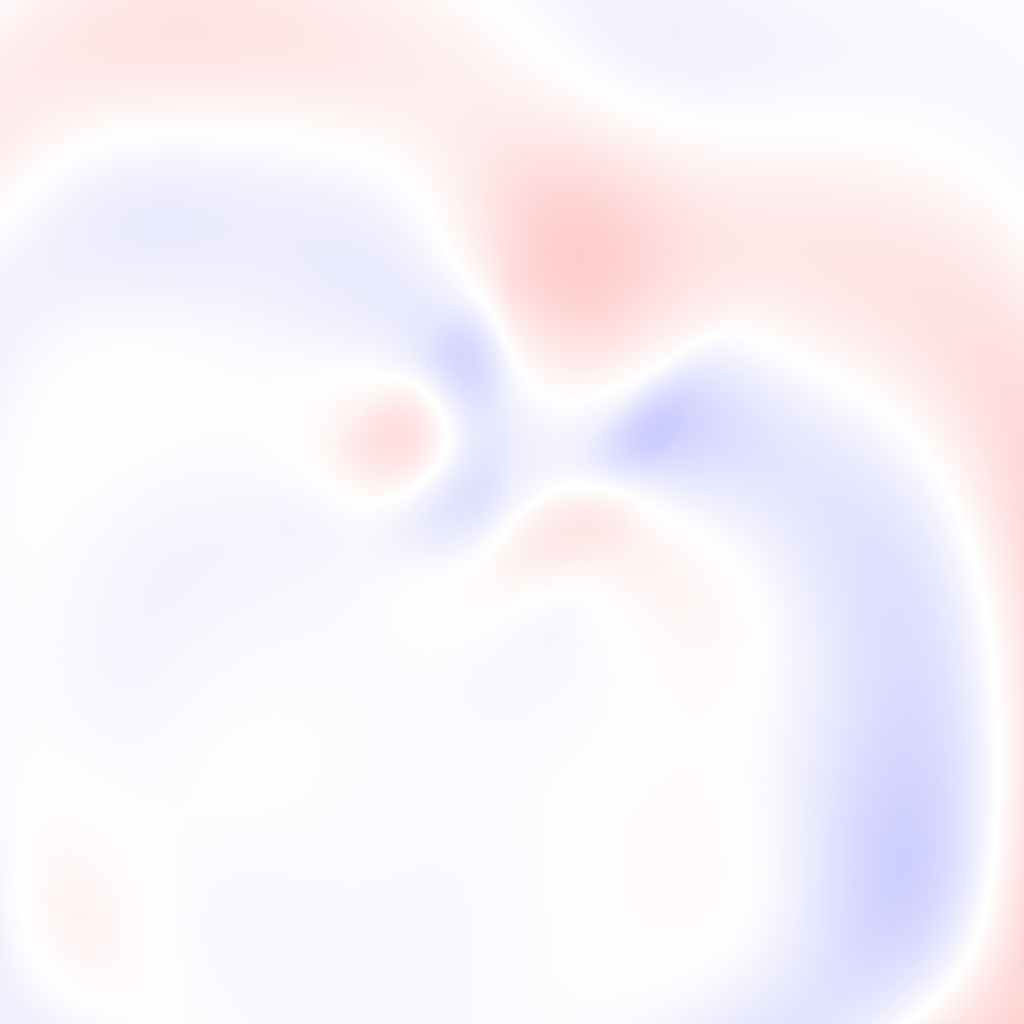}};
\draw[thick,dashed] (-2.00,-2.00) rectangle (2.00,2.00);

\draw[thick] ( 0.00,-1.12) -- ( 1.12, 1.12) -- ( 0.00, 0.00) -- ( 0.00, 1.12) -- (-1.12, 0.00) -- cycle;
\end{tikzpicture}
\end{minipage}
\begin{minipage}{.05\linewidth}
\begin{tikzpicture}
\draw (0,-2.00) node[anchor=south west,inner sep=0]%
{\includegraphics[width=0.5cm,height=4.0cm]{figures/reflection/images/cbar_bwr}};
\draw[thick] (0,-2.00) rectangle (.5,2.00);

\draw (0.5,-2.00) node [anchor=west] {$-0.2$};
\draw (0.5, 2.00) node [anchor=west] {${\color{white}+}0.2$};
\draw (0.5, 0   ) node [anchor=west] {${\color{white}+}0.0$};
\end{tikzpicture}
\end{minipage}

\begin{minipage}{.02\linewidth}
\begin{turn}{90}
True error
\end{turn}
\end{minipage}
\begin{minipage}{.30\linewidth}
\begin{tikzpicture}
\draw (0,0) node[anchor=south west,inner sep=0]%
{\includegraphics[width=4.0cm]{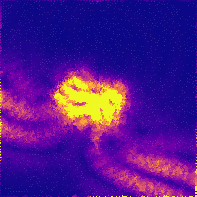}};
\draw[thick] (0,0) rectangle (4.0,4.0);
\end{tikzpicture}
\end{minipage}
\begin{minipage}{.30\linewidth}
\begin{tikzpicture}
\draw (0,0) node[anchor=south west,inner sep=0]%
{\includegraphics[width=4.0cm]{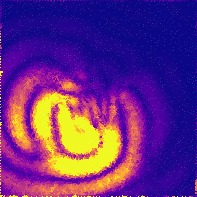}};
\draw[thick] (0,0) rectangle (4.0,4.0);
\end{tikzpicture}
\end{minipage}
\begin{minipage}{.30\linewidth}
\begin{tikzpicture}
\draw (0,0) node[anchor=south west,inner sep=0]%
{\includegraphics[width=4.0cm]{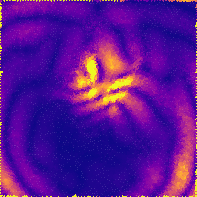}};
\draw[thick] (0,0) rectangle (4.0,4.0);
\end{tikzpicture}
\end{minipage}
\begin{minipage}{.05\linewidth}
\begin{tikzpicture}
\draw (0,-2.00) node[anchor=south west,inner sep=0]%
{\includegraphics[width=0.5cm,height=4.0cm]{figures/reflection/images/cbar_plasma}};
\draw[thick] (0,-2.00) rectangle (.5,2.00);
\draw[thick] (0,-2.00) rectangle (.5,2.00);

\draw (0.5,-2.00) node [anchor=west] {$0.000$};
\draw (0.5, 2.00) node [anchor=west] {$0.002$};
\end{tikzpicture}
\end{minipage}

\begin{minipage}{.02\linewidth}
\begin{turn}{90}
Estimated error
\end{turn}

\begin{center}
$ $
\end{center}

\end{minipage}
\begin{minipage}{.30\linewidth}
\begin{tikzpicture}
\draw (0,0) node[anchor=south west,inner sep=0]%
{\includegraphics[width=4.0cm]{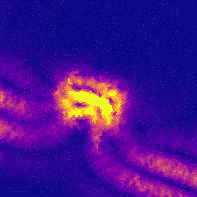}};
\draw[thick] (0,0) rectangle (4.0,4.0);
\end{tikzpicture}

\begin{center}
$t = 4.0$
\end{center}
\end{minipage}
\begin{minipage}{.30\linewidth}
\begin{tikzpicture}
\draw (0,0) node[anchor=south west,inner sep=0]%
{\includegraphics[width=4.0cm]{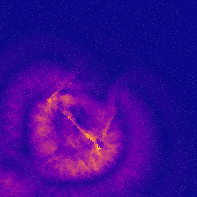}};
\draw[thick] (0,0) rectangle (4.0,4.0);
\end{tikzpicture}

\begin{center}
$t = 5.5$
\end{center}

\end{minipage}
\begin{minipage}{.30\linewidth}
\begin{tikzpicture}
\draw (0,0) node[anchor=south west,inner sep=0]%
{\includegraphics[width=4.0cm]{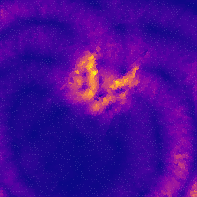}};
\draw[thick] (0,0) rectangle (4.0,4.0);
\end{tikzpicture}

\begin{center}
$t = 6.5$
\end{center}

\end{minipage}
\begin{minipage}{.05\linewidth}
\begin{tikzpicture}
\draw (0,-2.00) node[anchor=south west,inner sep=0]%
{\includegraphics[width=0.5cm,height=4.0cm]{figures/reflection/images/cbar_plasma}};
\draw[thick] (0,-2.00) rectangle (.5,2.00);
\draw[thick] (0,-2.00) rectangle (.5,2.00);

\draw (0.5,-2.00) node [anchor=west] {$0.000$};
\draw (0.5, 2.00) node [anchor=west] {$0.002$};
\end{tikzpicture}

\begin{center}
$ $
\end{center}

\end{minipage}

\caption{Obstacle example with $\sigma = 0.25$ and $h = 0.0125$}
\label{figure_penetrable_pictures}
\end{figure}

\section{Conclusion}
\label{section_conclusion}

We present a construction of an equilibrated estimator for the scalar wave equation.
Our construction avoids elliptic reconstructions, and is similar to \cite{bernardi_suli_2005a}.
The key novelty of our work is to employ a damped energy norm to measure the error together
with a careful reliability and efficiency analysis providing a guaranteed and asymptotically
constant-free upper bound as well as a polynomial-degree-robust lower bound. Numerical examples
highlight the theory and suggest that it is sharp.
This work is currently limited to the semi-discretization in space, and future work will be
guided towards taking into account time-discretization, for instance following
\cite{georgoulis_lakkis_makridakis_virtanen_2016a,gorynina_lozinski_picasso_2019a}.
Besides, it would be interesting to provide equilibrated flux constructions that can
operate in the presence of mass-lumping \cite{cohen_joly_roberts_tordjman_2001a},
or to consider discontinuous Galerkin schemes
\cite{ern_vohralik_2015a,grote_schneebeli_schotzau_2006a}.

\bibliographystyle{amsplain}
\bibliography{bibliography,specific}

\end{document}